\newtheorem{thm}{Theorem}[section]
\newtheorem{lem}[thm]{Lemma}
\newtheorem{prop}[thm]{Proposition}
\newtheorem{defn}[thm]{Definition}
    \numberwithin{equation}{section}
    \numberwithin{table}{section}
    \numberwithin{figure}{section}
\theoremstyle{remark}
\newcommand{\bddots}{%
  \mathinner{\mkern1mu\raise\p@\vbox{\kern7\p@\hbox{.}}\mkern2mu
    \raise4\p@\hbox{.}\mkern2mu\raise7\p@\hbox{.}\mkern1mu}}
\def\a{{\alpha}}
\def\t{{\theta}}
\def\eb{{\mathbf e}}
\def\m{{\mathbf m}}
\def\ib{{\mathbf i}}
\def\jb{{\mathbf j}}
\def\kb{{\mathbf k}}
\def\lb{{\mathbf l}}
\def\mb{{\mathbf m}}
\def\sb{{\mathbf s}}
\def\tb{{\mathbf t}}
 \def\NN{{\mathbb N}}
 \def\CA{{\mathcal A}}
 \def\CG{{\mathcal G}}
 \def\CP{{\mathcal P}}
 \def\CK{{\mathcal K}}
 \def\CI{{\mathcal I}}
 \def\CL{{\mathcal L}}
 \def\CS{{\mathcal S}}
 \def\CT{{\mathcal T}}
 \def\CTC{{\mathcal T\!C}}
 \def\CTS{{\mathcal T\!S}}
 \def\CH{{\mathcal H}}
 \def\HH{{\mathbb H}}
 \def\RR{{\mathbb R}}
 \def\ZZ{{\mathbb Z}}
 \def\A{{\mathcal A}}
 \def\G{{\mathcal G}}
\newcommand{\e}{\mathrm{e}}
\newcommand{\tr}{{\mathsf {tr}}}
\newcommand{\TC}{{\mathsf {TC}}}
\newcommand{\TS}{{\mathsf {TS}}}
\def \la {\langle}
\def \ra {\rangle}
\newcommand{\wh}{\widehat}
\begin{document}

\title[Discrete Fourier analysis on dodecahedron and tetrahedron]
{Discrete Fourier analysis on a dodecahedron
and a tetrahedron}
\author{Huiyuan Li}
\address{Institute of Software\\
Chinese Academy of Sciences\\ Beijing 100080,China}
\email{hynli@mail.rdcps.ac.cn}
\author{Yuan Xu}
\address{  Department of Mathematics\\ University of Oregon\\
    Eugene, Oregon 97403-1222.}
\email{yuan@math.uoregon.edu}

\date{\today}
\keywords{Discrete Fourier series, trigonometric, Lagrange interpolation,
dodecahedron, tetrahedron}
\subjclass{41A05, 41A10}
\thanks{The first authors were supported by NSFC Grant 10601056,
10431050 and 60573023. The second author was supported by NSF
Grant DMS-0604056}

\begin{abstract}
A discrete Fourier analysis on the dodecahedron is studied, from which
results on a tetrahedron is deduced by invariance. The results include
Fourier analysis in trigonometric functions, interpolation and cubature
formulas on these domains. In particular, a trigonometric Lagrange
interpolation on the tetrahedron is shown to satisfy an explicit compact
formula and the Lebesgue constant of the interpolation is shown to be in
the order of $(\log n)^3$.
\end{abstract}

\maketitle

\section{Introduction}
\setcounter{equation}{0}

It is well known that Fourier analysis in several variables can be developed based
on the periodicity defined by a lattice, which is a discrete subgroup defined by
$A\ZZ^d$, where $A$ is a nonsingular $d\times d$ matrix. A lattice $L:= A\ZZ^d$
is called a tiling lattice of $\RR^d$ if there is bounded set $\Omega$ that tiles
$\RR^d$ in the sense that $\Omega + L =\RR^d$. Let $L$ be a tiling lattice and
$L^\perp:= A^{-\tr} \ZZ^d$ be its dual lattice; then a theorem of Fuglede \cite{F}
states that the family of exponentials $\{e^{2\pi i \alpha \cdot x }: \alpha \in L^\perp\}$
forms an orthonormal basis for $L^2(\Omega)$. The Fourier expansion on
$\Omega$ is essentially the usual multivariate Fourier series under a change of
variables $x \mapsto A^{-1} x$.

One can also develop a discrete Fourier analysis associated with a
lattice, starting with a discrete Fourier transform based on
$L^\perp$, which has applications in areas such as signal processing
and sampling theory (see, for example, \cite{DM, Hi, Ma}).  Recently
in \cite{LSX}, we studied the discrete Fourier transform and used it
to derive results on cubature and trigonometric interpolation on the
domain $\Omega$, both are important tools in numerical computation
and approximation theory. The simplest domain for the tiling lattice
is the regular hexagon, which has the invariance of the reflection
group $\CA_2$. The fundamental domain of the hexagon under $\CA_2$
is an equilateral triangle. A detailed study of the discrete Fourier
analysis is carried out on the hexagon and on the triangle in
\cite{LSX}. The invariant and the anti-invariant projections of the
basic exponential functions are analogues of cosine and sine
functions on the triangle, which have been studied previously in
\cite{K, Sun}. Explicit and compact formulas are derived for several
cubature formulas and interpolation functions in \cite{LSX}. In
particular, we found a compact formula for the Lagrange
interpolation by trigonometric functions that interpolates at
$X_n:=\{(\frac{i}{n}, \frac{j}{n}): 0 \le i \le j \le n\}$ on the
triangle $T: = \{(x,y): x,y \ge 0, \ x+y \le 1\}$ and proved that
its Lebesgue constant is in the order of $(\log n )^2$. The result
on interpolation is noteworthy since it is in sharp contract to the
algebraic polynomial interpolation on $X_n$, which has an
undesirable convergence behavior.

The purpose of the present paper is to carry out a similar analysis on $\RR^3$
for a tetrahedron, also called a simplex in $\RR^3$. For this we work with the
face-centered cubic (fcc) lattice, which has the symmetry of reflection group
$\CA_3$. The domain that tiles $\RR^3$ with the fcc lattice is the rhombic
dodecahedron (see Figure 3.2), whose fundamental domain under $\CA_3$
is a regular tetrahedron. We shall develop in detail a Fourier analysis on these
two domains, study analogues of cosine and sine functions as in the case of
hexagon, and establish compact formulas for discrete inner product, cubature
formulas and Dirichlet kernels.

Just as in the case of the regular hexagon
\cite{LSX,Sun}, the analysis on the rhombic dodecahedron and the tetrahedron
 is carried out using {\it homogeneous coordinates} of $\RR^4$ instead of in
 $\RR^3$. This has the advantage that our formulas are more symmetric and
 the symmetry of the domain becomes more transparent. The Fourier transform
 on the dodecahedron as well as the generalized cosine and sine functions
 were studied earlier in \cite{Sun2} using a homogeneous coordinate system
 in $\RR^6$. We choose our homogeneous coordinates in $\RR^4$ since
 $\CA_3$ can be regarded as a permutation group on four elements.

It should be pointed out that the development on a specific domain does not
follow immediately from the general theory. To tile the space without overlap,
the domain $\Omega$ can only include part of its boundary. For the discrete
Fourier analysis, this fact causes a loss of symmetry; for example, for the
rhombic dodecahedron, the discrete Fourier transform is defined using only
part of the boundary points. In order to obtain results that are symmetric, we have
to modify definitions to include all boundary points, which can be delicate if the
orthogonality is to be preserved. The difficulty lies in the congruent relations
of the boundary. In order to understand the periodicity based on the rhombic
dodecahedron, we need to understand the congruence of the boundary under
translation by the lattice. Furthermore, in order to transform results from the
rhombic dodecahedron to the tetrahedron, we need to understand the action
of $\CA_3$ on the boundary. The complication of the congruence of the
boundary is also one of the main reasons why we restrict ourself to $\RR^3$
instead of dealing with lattices on $\RR^d$ that are invariant under $\CA_d$
(see \cite{CS}) for all $d \ge 3$.

One of our main results is a compact formula for the Lagrange
interpolation based on the regular points on the tetrahedron, whose
Lebesgue constant is shown to be in the order of $(\log n)^3$.
Again, this is a result in sharp contrast to interpolation by
algebraic polynomials on the same set of points. Interpolation by
simple functions is an important tool in numerical analysis that has
a variety of applications. For interpolation on the point sets in
several variables, little results are known if the point sets are
not of tensor product type. Moreover, most studies consider mainly
interpolation by algebraic polynomials, which face the problem of
choosing interpolation points, as equally spaced points do not yield
favorable results \cite{Bos}. Our study in \cite{LSX} and in the
present paper demonstrates that interpolation at equally spaced
points on the triangle and on the tetrahedron can be solved with
{\it trigonometric functions}: the interpolation can be carried out
by compact formulas that offer fast computation, and the convergence
behavior is as good as can be expected since a Lebesgue constant of
$(\log n)^d$ for interpolation in $\RR^d$ is about optimal. In the
present paper, we concentrate on theoretic framework that leads to
trigonometric interpolation on the tetrahedron, numerical  study is
left out for a future work.

The paper is organized as follows. In Section 2 we sum up results from the
general theory of discrete Fourier analysis associated with lattice.
The analysis on the rhombic dodecahedron will be carried out in Section 3,
including a detailed study on the congruence of the boundary. Results on the
tetrahedron are developed in Section 4, including generalized sine and cosine
functions.


\section{Discrete Fourier analysis with Lattice}
\setcounter{equation}{0}

In this section we recall results on discrete Fourier analysis associated with
lattice. Background and the content of the first subsection can be found in
\cite{CS, DM, Hi, Ma}. Results in the second subsection are developed in
\cite{LSX}. We shall be brief and refer the proof and discussions to the
above mentioned references.


\subsection{ Lattice and Fourier series}

A  lattice $L$ of $\RR^d$ is a discrete subgroup that contains $d$ linearly
independent vectors,
\begin{align*}
   L:=\left\{ k_1 a_1 +k_2a_2+\cdots+k_da_d:\ k_i \in \ZZ,\ i=1,2,\cdots,d \right\},
 \end{align*}
where $a_1,\cdots,a_d$ are linearly independent column vectors in $\RR^d$.
Let $A \in \RR^{d\times d}$ be the matrix whose columns are $a_1,\cdots,a_d$.
Then $A$ is called a generator matrix of the lattice $L$. We can write $L$ as
$L_A$ and a short notation for $L_A$ is $A\ZZ^d$; that is
 \begin{align*}
         L_A = A\ZZ^d = \left\{A k:\ k\in \ZZ^d \right\}.
 \end{align*}
Throughout this paper,  we shall treat a vector in the Euclidean space as a column
 vector whenever needed. As a result,  $x \cdot y = x^{\tr}y$, where $x^{\tr}$
 denotes the transpose of $x$. The dual lattice $L^{\perp}$ of $L$ is given by
 \begin{align*}
 L^{\perp}:=\ &\left\{ x\in \RR^d: \ x \cdot y \in \ZZ \text{ for all } y \in L\right\}.
 \end{align*}
where  $x\cdot y$ denotes the usual Euclidean inner product of $x$ and $y$.
The generator matrix of $L^\perp$ is $A^{-\tr}$.

A bounded set $\Omega\subset \RR^d$ is said to tile  $\RR^d$ with the lattice $L$ if
\begin{align*}
 \sum_{\alpha\in L} \chi_{\Omega} (x+\alpha) = 1, \quad \text{ for almost all }
     x \in \RR^d,
\end{align*}
where $\chi_{\Omega}$ denotes the characteristic function of $\Omega$, which
we write as $\Omega+L= \RR^d$. Tiling and Fourier analysis are closely related
as demonstrated by the Fuglede theorem. Let $\int_{\Omega} f(x) dx$ denote the integration
of the function $f$ over $\Omega$. Let $\langle \cdot, \cdot \rangle_{\Omega}$
denote the inner product in $L^2({\Omega})$,
\begin{align}
\label{eq:innerproduct}
\langle f,g\rangle_{\Omega} := \frac{1}{|\Omega|} \int_{\Omega} f(x) \overline{g(x)} dx,
\end{align}
where $|\Omega|$ denotes the measure of $\Omega$. The following
fundamental result was proved by Fuglede in \cite{F}.

\begin{thm}  \label{Fuglede}
Let $\Omega\subset\RR^d$ be a bounded
domain and $L$ be a lattice of $\RR^d$. Then $\Omega+L= \RR^d$ if and only
if $\left\{e^{2\pi i\, \alpha \cdot x}: \alpha \in L^{\perp} \right\}$
is an orthonormal basis with respect to the inner product \eqref{eq:innerproduct}.
\end{thm}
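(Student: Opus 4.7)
The plan is to reduce both implications to a Poisson-summation style identity for the indicator function $\chi_\Omega$. First I would introduce the periodization $F(x) := \sum_{\alpha \in L} \chi_\Omega(x+\alpha)$, which is $L$-periodic and nonnegative, and observe that the tiling condition $\Omega + L = \RR^d$ is exactly $F \equiv 1$ almost everywhere. Viewing $F$ as a function on the torus $\RR^d/L$ with fundamental domain $D$ of volume $|\det A|$, its Fourier coefficients with respect to the dual basis $\{e^{2\pi i \beta \cdot x} : \beta \in L^\perp\}$ unfold, by translation invariance, to
\begin{equation*}
c_\beta = \frac{1}{|\det A|}\int_D F(x)\,e^{-2\pi i \beta \cdot x}\,dx = \frac{1}{|\det A|}\int_\Omega e^{-2\pi i \beta \cdot x}\,dx.
\end{equation*}
This is the bridge: up to the scalar $|\Omega|/|\det A|$, the coefficient $c_\beta$ equals $\langle 1, e^{2\pi i \beta \cdot x}\rangle_\Omega$, and more generally $\langle e^{2\pi i \alpha \cdot x}, e^{2\pi i \alpha' \cdot x}\rangle_\Omega$ reduces to $c_{\alpha'-\alpha}$ by the same computation.

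For the forward direction, if $F\equiv 1$ then $c_0 = 1$ (giving $|\Omega| = |\det A|$) and $c_\beta = 0$ for every nonzero $\beta \in L^\perp$, which yields orthonormality of the exponentials. Completeness follows because each $f \in L^2(\Omega)$ extends, a.e.\ uniquely thanks to the tiling, to an $L$-periodic function on $\RR^d$ to which the classical multivariate Fourier theorem on $\RR^d/L$ applies, and the restriction of the periodic expansion back to $\Omega$ gives the desired expansion.

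For the converse, orthonormality alone forces $c_\beta = 0$ for $\beta \neq 0$, so $F$ is a.e.\ equal to the constant $|\Omega|/|\det A|$; since $F$ is integer-valued this constant is some positive integer $m$. To pin $m$ down to $1$ I would use completeness through Parseval: for any $\xi \in \RR^d$, applying Parseval to $e^{2\pi i \xi \cdot x} \in L^2(\Omega)$ yields
\begin{equation*}
\sum_{\alpha \in L^\perp} \bigl|\widehat{\chi_\Omega}(\alpha - \xi)\bigr|^2 = |\Omega|^2, \qquad \widehat{\chi_\Omega}(\eta) := \int_\Omega e^{-2\pi i \eta \cdot x}\,dx.
\end{equation*}
Integrating this identity in $\xi$ over a fundamental domain of $L^\perp$, which has volume $1/|\det A|$, telescopes the sum into $\int_{\RR^d} |\widehat{\chi_\Omega}(\eta)|^2\,d\eta$; by Plancherel this equals $|\Omega|$. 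Comparing the two sides gives $|\Omega| = |\det A|$, hence $m = 1$ and $F \equiv 1$.

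The main obstacle I expect is precisely this last step: orthogonality alone only forces $F$ to be a constant consistent with a \emph{multiple} tiling, so one genuinely needs completeness to exclude covers of multiplicity greater than one. The Parseval--Plancherel calculation above is the clean route; a softer alternative would be to observe that if $F \equiv m \geq 2$, then $\Omega$ contains two disjoint positive-measure subsets differing by an element of $L$, on which one can cook up two distinct $L^2(\Omega)$ functions with identical Fourier coefficients, contradicting completeness directly.
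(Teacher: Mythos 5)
The paper does not actually prove this statement: it is quoted as Fuglede's theorem with a citation to \cite{F}, and Section 2 explicitly defers all proofs to the references. So there is no in-paper argument to compare against; what you have written is a self-contained proof, and it is correct --- essentially the standard argument for the lattice case of Fuglede's theorem. The periodization $F=\sum_{\alpha\in L}\chi_\Omega(\cdot+\alpha)$ is the right bridge: its $\beta$-th Fourier coefficient on $\RR^d/L$ unfolds to $\frac{1}{|\det A|}\int_\Omega e^{-2\pi i\beta\cdot x}\,dx$ exactly as you compute, and these coefficients are, up to the factor $|\Omega|/|\det A|$, the inner products $\langle e^{2\pi i\alpha\cdot x},e^{2\pi i\alpha'\cdot x}\rangle_\Omega$ with $\beta=\alpha'-\alpha$. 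Your handling of the converse is the step most often botched, and you get it right: orthogonality alone (norms are automatically $1$ under the normalized inner product) only yields $F\equiv m$ for a positive integer $m$, i.e.\ a multiple tiling, and one genuinely needs completeness to force $m=1$; your Parseval identity for $e^{2\pi i\xi\cdot x}$, averaged in $\xi$ over a fundamental domain of $L^\perp$ of volume $1/|\det A|$ and compared with Plancherel, gives $|\Omega|=|\Omega|^2/|\det A|$ and hence $m=|\Omega|/|\det A|=1$. Two points deserve one more sentence each in a write-up: in the forward direction, the completeness claim rests on the fact that tiling makes $\Omega$ a fundamental domain up to null sets, so restriction is a unitary identification of $L^2(\RR^d/L)$ (suitably normalized) with $L^2(\Omega)$ carrying exponentials to exponentials; and in the converse, you should record that $F$ is bounded (only finitely many lattice translates of the bounded set $\Omega$ contain any given point), so $F$ lies in $L^2$ of the torus and the vanishing of its nonzero Fourier coefficients legitimately forces it to be a.e.\ constant.
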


The orthonormal property is defined with respect to the normalized Lebesgue
measure on $\Omega$. If $L = L_A$, then the measure of $\Omega$ is equal to
$\sqrt{\det (A^{\tr}A)}$. Furthermore, we can write $\a \in L_A^\perp = A^{-\tr}\ZZ^d$
as $\alpha = A^{-\tr} k$ with $k \in  \ZZ^d$, so that
$\alpha\cdot x = k^{\tr} A^{-1} x$. Hence the orthogonality in the theorem is
\begin{align}
\label{eq:orthonormal}
  \frac{1}{\sqrt{\det (A^{\tr}A)}} \int_{\Omega} \e^{2\pi i\, k^{\tr} A^{-1} x} dx =
      \delta_{k,0}, \quad k\in \ZZ^d.
\end{align}
The set $\Omega$ is called a spectral set (fundamental region) for the lattice $L$.
If $L=L_A$ we also write $\Omega= \Omega_A$.

A function $f\in L^1(\Omega_A)$ can be expanded into a Fourier series
\begin{align*}
 f(x) \sim \sum_{k\in \ZZ^d} c_k \e^{2\pi i\, k^{\tr} A^{-1} x }, \qquad c_k =
  \frac{1}{\sqrt{\det(A^{\tr}A)}}\int_{\Omega} f(x) \e^{-2\pi i\, k^{\tr} A^{-1} x} dx.
\end{align*}
The Fourier transform $\widehat{f}$ of a function defined on $L^1({\RR^d})$ and
its inversion are defined by
\begin{align*}
\widehat{f}(\xi ) = \int_{\RR^d} f(x) \e^{-2\pi i\, \xi \cdot x } dx ,\quad
{f}(x) = \int_{\RR^d} \widehat{f}(\xi) \e^{2\pi i\, \xi \cdot x } d\xi.
\end{align*}
Our first result is the following sampling theorem (see, for example,
\cite{Hi,Ma}).

\begin{prop} \label{pro:interpolation}
Let $\Omega$
 be the spectral set of the lattice $A\ZZ^d$. Assume that $\widehat{f}$ is
supported on $\Omega$ and $\widehat{f}\in L^2(\Omega)$. Then
\begin{align*}
   f(x) = \sum_{k\in \ZZ^d} f(A^{-\tr}k) \Phi_{\Omega} (x-A^{-\tr}k)
\end{align*}
in $L^2(\Omega)$, where
\begin{align*}
\Phi_{\Omega} (x) = \frac{1}{\sqrt{\det(A^{\tr}A)}} \int_{\Omega} \e^{2\pi i \xi \cdot x} d\xi.
\end{align*}
\end{prop}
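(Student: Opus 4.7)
The plan is to use Theorem~\ref{Fuglede} to expand $\widehat{f}$ in a Fourier series on $\Omega$ and then recognize its Fourier coefficients as samples of $f$ via the inversion formula. Since $\widehat f\in L^2(\Omega)$, Fuglede's theorem supplies the orthonormal basis $\{e^{2\pi i\, k^{\tr}A^{-1}\xi}:k\in\ZZ^d\}$ of $L^2(\Omega)$, so that
\begin{align*}
\widehat f(\xi)=\sum_{k\in\ZZ^d}c_k\,\e^{2\pi i\, k^{\tr}A^{-1}\xi},\qquad c_k=\frac{1}{\sqrt{\det(A^{\tr}A)}}\int_{\Omega}\widehat f(\xi)\,\e^{-2\pi i\, k^{\tr}A^{-1}\xi}\,d\xi,
\end{align*}
with convergence in $L^2(\Omega)$.

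Next I identify $c_k$ as a sample of $f$. Because $\widehat f$ is supported in $\Omega$, the inversion formula reads $f(y)=\int_\Omega \widehat f(\xi)\e^{2\pi i\,\xi\cdot y}\,d\xi$ for every $y\in\RR^d$; setting $y=-A^{-\tr}k$ and using $\xi\cdot A^{-\tr}k=k^{\tr}A^{-1}\xi$ yields $c_k=f(-A^{-\tr}k)/\sqrt{\det(A^{\tr}A)}$. Replacing $k$ by $-k$ (which permutes $\ZZ^d$) gives
\begin{align*}
\widehat f(\xi)=\frac{1}{\sqrt{\det(A^{\tr}A)}}\sum_{k\in\ZZ^d}f(A^{-\tr}k)\,\e^{-2\pi i\, k^{\tr}A^{-1}\xi}.
\end{align*}

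Finally, I apply the inversion formula once more and interchange the (bounded) linear operator $g\mapsto \int_\Omega g(\xi)\e^{2\pi i\,\xi\cdot x}\,d\xi$ with the $L^2(\Omega)$-convergent series, so that
\begin{align*}
f(x)=\int_\Omega \widehat f(\xi)\,\e^{2\pi i\,\xi\cdot x}\,d\xi=\sum_{k\in\ZZ^d}f(A^{-\tr}k)\cdot\frac{1}{\sqrt{\det(A^{\tr}A)}}\int_\Omega \e^{2\pi i\,\xi\cdot(x-A^{-\tr}k)}\,d\xi,
\end{align*}
which is the stated identity because the inner integral is $\Phi_\Omega(x-A^{-\tr}k)$. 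The argument is entirely routine; the only points demanding care are the bookkeeping of the normalization $\sqrt{\det(A^{\tr}A)}=|\Omega|$, the sign in the Fourier convention (handled by the reindexing $k\mapsto -k$), and the justification of the term-by-term integration, which I would phrase as Plancherel/continuity of the inverse Fourier transform on band-limited functions. No serious obstacle is anticipated.
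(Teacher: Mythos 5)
Your proof is correct. The paper itself does not write out an argument for this proposition: it simply remarks that the result ``is a consequence of the Poisson summation formula'' and points to the references \cite{Hi,Ma}. What you do instead is derive it directly from Theorem~\ref{Fuglede}: expand $\widehat f$ in the orthonormal basis $\{e^{2\pi i\,k^{\tr}A^{-1}\xi}\}$ of $L^2(\Omega)$, recognize the coefficients as $f(-A^{-\tr}k)/|\Omega|$ via the inversion formula (legitimate pointwise since $\Omega$ is bounded, so $\widehat f\in L^2(\Omega)\subset L^1(\Omega)$ and $f$ has a continuous band-limited representative), and push the bounded functional $g\mapsto\int_\Omega g(\xi)e^{2\pi i\xi\cdot x}\,d\xi$ through the $L^2$-convergent series. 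This is the standard sampling-theorem argument and is essentially equivalent to the Poisson-summation route -- the orthogonality relation \eqref{eq:orthonormal} supplied by Fuglede's theorem is exactly the input that makes the periodization of $\widehat f$ over $L^\perp$ have Fourier coefficients given by samples of $f$ -- but your version has the advantage of being self-contained within the tools already stated in the paper, and it yields pointwise convergence for each $x$ in addition to the $L^2$ convergence asserted in the statement (the latter following, as you note, from Plancherel). The only bookkeeping items are the ones you already flag: $|\Omega|=\sqrt{\det(A^{\tr}A)}$, the reindexing $k\mapsto -k$ to fix the sign, and the justification of term-by-term integration. No gap.
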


This theorem is a consequence of the Poisson summation formula. We notice that
\begin{align*}
\Phi_{\Omega} (A^{-\tr}j) = \delta_{0,j}, \quad \text{ for all } j\in \ZZ^d,
\end{align*}
by Theorem \ref{Fuglede}, so that $\Phi_{\Omega}$ can be considered as a cardinal interpolation function.


\subsection{Discrete Fourier analysis and interpolation}
A function $f$ defined on $\RR^d$ is called {\it periodic} with respect to the lattice
$A\ZZ^d$ if
\begin{align*}
          f(x+Ak) = f(x)\qquad \text{ for all } k\in \ZZ^d.
\end{align*}
The spectral set $\Omega$ of the lattice $A\ZZ^d$ is not unique. In order to carry
out the discrete Fourier analysis with respect to the lattice, we shall fix $\Omega$
such that $\Omega$ contains $0$ in its interior and we further require that $\Omega$
tiles $\RR^d$ with $L_A$ without overlapping and without gap. In other words, we
require that
\begin{align}
\label{eq:tiling2}
 \sum_{k\in \ZZ^d}\chi_{\Omega} (x+Ak) = 1, \qquad \text{ for all }  x\in \RR^d.
\end{align}
For example, we can take $\Omega=[-\frac12,\frac12)^d$ for the standard cubic
lattice $\ZZ^d$.

\begin{defn} \label{def:N}
Let $A$ and $B$ be two nonsingular matrices in $\RR^{d\times d}$,
$\Omega_A$ and $\Omega_B$ satisfy \eqref{eq:tiling2}. Assume all entries of
$N:= B^\tr A$ are integers. Define
\begin{align*}
 \Lambda_N    := \left\{k\in \ZZ^d:\ B^{-\tr}k\in \Omega_A \right\}\ \text{  and  }\
 \Lambda_N^{\dag} := \left\{k\in \ZZ^d:\ A^{-\tr}k\in \Omega_B \right\}.
\end{align*}
\end{defn}

Two points $x,y\in \RR^d$ are said to be congruent with respect to the lattice
$A\ZZ^d$, if $x-y\in A\ZZ^d$, and we write $x\equiv y \pmod{A}$. The following
two theorems are the central results for the discrete Fourier transform.

\begin{thm}  \label{th:DFT}
Let $A,\,B$ and $N$ be as in Definition \ref{def:N}. Then
\begin{align*}
\frac{1}{|\det(N)|} \sum_{j\in \Lambda_N} \e^{2\pi i\, k^{\tr} N^{-1} j}
 =\begin{cases} 1, & \text{if }\ k\equiv 0 \pmod{N^{\tr}}, \\
  0, & \text{otherwise}.
   \end{cases}
\end{align*}
\end{thm}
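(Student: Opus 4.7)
The plan is to view $\Lambda_N$ as a complete system of coset representatives for $\ZZ^d/N\ZZ^d$ and then apply the standard character-sum dichotomy to $S(k):=\sum_{j\in\Lambda_N}\e^{2\pi i\,k^\tr N^{-1}j}$. Once the coset-representative property is in hand, the cardinality count $|\Lambda_N|=|\det N|$ is immediate, and the result splits into the two cases $k\in N^\tr\ZZ^d$ and $k\notin N^\tr\ZZ^d$.

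The key preliminary claim is that reduction modulo $N\ZZ^d$ restricts to a bijection $\Lambda_N\to\ZZ^d/N\ZZ^d$. For injectivity, suppose $j_1,j_2\in\Lambda_N$ with $j_1-j_2=Nm$ for some $m\in\ZZ^d$; then
\begin{align*}
B^{-\tr}j_1-B^{-\tr}j_2 = B^{-\tr}Nm = B^{-\tr}B^{\tr}Am = Am \in A\ZZ^d,
\end{align*}
so $B^{-\tr}j_1\equiv B^{-\tr}j_2\pmod{A}$. Since both points lie in $\Omega_A$ and $\Omega_A$ tiles without overlap by \eqref{eq:tiling2}, one must have $B^{-\tr}j_1=B^{-\tr}j_2$, and hence $j_1=j_2$. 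For surjectivity, given any $j\in\ZZ^d$, the tiling condition provides a unique $m\in\ZZ^d$ with $B^{-\tr}j-Am\in\Omega_A$, so $j-Nm\in\Lambda_N$ represents $j$ modulo $N\ZZ^d$. In particular $|\Lambda_N|=|\det N|$.

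For the dichotomy: if $k=N^\tr m$ with $m\in\ZZ^d$, then $k^\tr N^{-1}j=m^\tr j\in\ZZ$ term by term, so every exponential equals $1$ and $S(k)=|\det N|$. Otherwise $N^{-\tr}k\notin\ZZ^d$, so some $l\in\ZZ^d$ satisfies $k^\tr N^{-1}l\notin\ZZ$; letting $j_0\in\Lambda_N$ be the representative of $l$, the map $j\mapsto j^*$, where $j^*\in\Lambda_N$ represents $j+j_0$ modulo $N\ZZ^d$, is a bijection of $\Lambda_N$, and writing $j^*=j+j_0-Nm$ shows $\e^{2\pi i\,k^\tr N^{-1}j^*}=\e^{2\pi i\,k^\tr N^{-1}j_0}\e^{2\pi i\,k^\tr N^{-1}j}$ (the extra factor $\e^{-2\pi i\,k^\tr m}$ is trivial). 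Reindexing the sum yields $S(k)=\e^{2\pi i\,k^\tr N^{-1}j_0}S(k)$, and since the prefactor is not $1$, $S(k)=0$.

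The main obstacle is the coset-representative claim: it hinges on $\Omega_A$ satisfying the strict non-overlapping tiling \eqref{eq:tiling2}, not merely Fuglede's almost-everywhere tiling, since boundary points could otherwise produce duplicate representatives and thereby invalidate both the cardinality count and the shift-bijection. Once that bijection is secured, the remaining character-sum manipulation is essentially routine.
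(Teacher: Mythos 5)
Your proof is correct and complete: the two pillars — that the exact, non-overlapping tiling \eqref{eq:tiling2} makes $j\mapsto B^{-\tr}j \bmod A\ZZ^d$ identify $\Lambda_N$ with a full set of coset representatives of $\ZZ^d/N\ZZ^d$ (hence $|\Lambda_N|=|\det N|$), followed by the translation-invariance/character-sum dichotomy — are exactly what is needed, and you correctly isolate the strict tiling hypothesis as the crux. The paper itself states this theorem without proof, deferring to the reference \cite{LSX}, where the argument runs along essentially the same lines as yours.
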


\begin{thm} \label{thm:2.4}
Let $A,\, B$ and $N$ be as in Definition \ref{def:N}. Define the discrete inner product
\begin{align*}
  \langle f,g \rangle_N = \frac{1}{|\det(N)|} \sum_{j\in \Lambda_N} f(B^{-\tr}j)
    \overline{ g(B^{-\tr}j)}
\end{align*}
for $f,\, g \in C(\Omega_A)$, the space of continuous functions on $\Omega_A$. Then
\begin{align}
\label{eq:equiv_inner}
 \langle f,\,g \rangle_N = \langle f,\,g \rangle
\end{align}
for all $f,\,g$ in the finite dimensional subspace
\begin{align*}
  \mathcal{H}_N := \mathrm{span}\left\{\phi_k:\ \phi_k(x) = \e^{2\pi i\, k^{\tr} A^{-1} x},\ k \in \Lambda_N^\dag \right\}.
\end{align*}
\end{thm}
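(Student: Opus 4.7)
The plan is to verify \eqref{eq:equiv_inner} on a spanning set and then invoke sesquilinearity. Since both $\langle\cdot,\cdot\rangle$ and $\langle\cdot,\cdot\rangle_N$ are sesquilinear in $(f,g)$, it suffices to prove $\langle \phi_k,\phi_\ell\rangle_N = \langle \phi_k,\phi_\ell\rangle$ for arbitrary $k,\ell\in\Lambda_N^\dag$. Because $\phi_k \overline{\phi_\ell}(x) = \e^{2\pi i\,(k-\ell)^\tr A^{-1}x} = \phi_{k-\ell}(x)$, the whole claim reduces to
\begin{align*}
\langle \phi_m,1\rangle_N = \langle \phi_m,1\rangle, \qquad m := k-\ell.
\end{align*}

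First I would compute the continuous side. Since $\Omega_A$ is a spectral set for $A\ZZ^d$, Theorem~\ref{Fuglede} together with \eqref{eq:orthonormal} gives $\langle \phi_m,1\rangle = \delta_{m,0}$ for every $m\in \ZZ^d$. Next I would compute the discrete side. Substituting $x = B^{-\tr}j$ and using the identity $A^{-1}B^{-\tr} = (B^\tr A)^{-1} = N^{-1}$, one obtains
\begin{align*}
\langle \phi_m,1\rangle_N = \frac{1}{|\det N|}\sum_{j\in\Lambda_N} \e^{2\pi i\, m^\tr N^{-1} j},
\end{align*}
which by Theorem~\ref{th:DFT} equals $1$ if $m\equiv 0\pmod{N^\tr}$ and $0$ otherwise.

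The remaining step, and the only delicate point, is to show that for $k,\ell\in\Lambda_N^\dag$ the congruence $k-\ell\equiv 0\pmod{N^\tr}$ forces $k=\ell$; granting this, the two sides agree with $\delta_{k,\ell}$ and the theorem follows. The argument is that $k-\ell = N^\tr n = A^\tr B n$ for some $n\in\ZZ^d$ implies $A^{-\tr}k - A^{-\tr}\ell = Bn \in B\ZZ^d$, i.e.\ $A^{-\tr}k \equiv A^{-\tr}\ell \pmod{B}$. Both points lie in $\Omega_B$ by the definition of $\Lambda_N^\dag$, and the non-overlapping tiling property \eqref{eq:tiling2} for $\Omega_B$ forces any two $B$-congruent points of $\Omega_B$ to coincide; invertibility of $A^{-\tr}$ then yields $k=\ell$.

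The main obstacle is exactly this last uniqueness step: it is what makes the definition of $\Lambda_N^\dag$ the ``right'' one, and it relies crucially on requiring $\Omega_B$ to tile $\RR^d$ without overlap in the strict pointwise sense of \eqref{eq:tiling2} rather than only almost-everywhere. Everything else is a routine combination of sesquilinearity, the change of variables $N = B^\tr A$, and the orthogonality results already established in Theorems~\ref{Fuglede} and \ref{th:DFT}.
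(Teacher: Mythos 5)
Your argument is correct and complete. The paper itself does not prove Theorem~\ref{thm:2.4} --- it defers to the reference \cite{LSX} --- but the proof you give is exactly the standard one: reduce by sesquilinearity to $\langle\phi_k,\phi_\ell\rangle_N=\langle\phi_k,\phi_\ell\rangle=\delta_{k,\ell}$, evaluate the discrete side via $A^{-1}B^{-\tr}=N^{-1}$ and Theorem~\ref{th:DFT}, and close the gap with the observation that $k-\ell\in N^{\tr}\ZZ^d=A^{\tr}B\ZZ^d$ places $A^{-\tr}k$ and $A^{-\tr}\ell$ in the same $B\ZZ^d$-coset inside $\Omega_B$, which the strict tiling condition \eqref{eq:tiling2} rules out unless $k=\ell$. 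You have also correctly identified that this last uniqueness step is the only nontrivial point and the reason the definition of $\Lambda_N^\dag$ requires the non-overlapping, everywhere-exact form of the tiling.
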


Let $|E|$ denote the cardinality of the set $E$. Then the dimension of
$\mathcal{H}_N$ is $|\Lambda_N^\dag|$.

Let $\mathcal{I}_N f$ denote the Fourier expansion of $f\in C(\Omega_A)$ in
$\mathcal{H}_N$ with respect to the inner product $\langle\cdot, \cdot\rangle_N$.
Then, analogous to the sampling theorem in Proposition \ref{pro:interpolation},
$\CI_Nf$ satisfies the following formula
\begin{align*}
 \mathcal{I}_N f(x) =\sum_{j\in \Lambda_N} f(B^{-\tr}j) \Phi^A_{\Omega_B} (x-B^{-\tr}j), \quad f\in C(\Omega_A),
\end{align*}
where
\begin{align*}
\Phi^A_{\Omega_B} (x) = \frac{1}{|\det(N)|} \sum_{k\in \Lambda_N^\dag}
      \e^{2\pi i\, k^{\tr} A^{-1} x}.
\end{align*}
The following theorem shows that $\CI_N f$ is an interpolation function.

\begin{thm} \label{thm:interpolation}
Let $A$, $B$ and $N$ be as in Definition \ref{def:N}. If
$\Lambda_N^\dag = \Lambda_{N^\tr}$, then $\mathcal{I}_N f$ is
the unique interpolation operator on $N$ in $\mathcal{H}_N$; that is
\begin{align*}
  \mathcal{I}_N f(B^{-\tr}j) = f(B^{-\tr}j), \quad \forall j\in \Lambda_N.
\end{align*}
In particular, $|\Lambda_N | = |\Lambda_N^\dag|$. Furthermore, the fundamental interpolation function
$\Phi^A_{\Omega_B}$ satisfies
\begin{align*}
\Phi^A_{\Omega_B}(x) = \sum_{k\in \ZZ^d} \Phi_{\Omega_B}(x+Aj).
\end{align*}
\end{thm}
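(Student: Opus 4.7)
The plan is to prove the three claims in turn: the interpolation identity, the equality $|\Lambda_N|=|\Lambda_N^\dag|$ together with uniqueness, and the Poisson-type periodization formula for $\Phi^A_{\Omega_B}$.

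For the interpolation property I would evaluate the kernel at a lattice displacement. Using $A^{-1}B^{-\tr}=(B^{\tr}A)^{-1}=N^{-1}$, for $j,\ell\in\Lambda_N$ the sum defining $\Phi^A_{\Omega_B}$ becomes
\begin{align*}
\Phi^A_{\Omega_B}\bigl(B^{-\tr}(j-\ell)\bigr)=\frac{1}{|\det N|}\sum_{k\in\Lambda_N^\dag}\e^{2\pi i\,(j-\ell)^{\tr}(N^{\tr})^{-1}k}.
\end{align*}
The hypothesis $\Lambda_N^\dag=\Lambda_{N^\tr}$ is exactly what allows me to apply \thmref{th:DFT} with the roles of $A$ and $B$ swapped (so that $N$ is replaced by $N^\tr$), giving $1$ when $j-\ell\equiv 0\pmod{N}$ and $0$ otherwise. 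Since $B^{-\tr}N=A$, the congruence $j\equiv\ell\pmod{N}$ translates into $B^{-\tr}j\equiv B^{-\tr}\ell\pmod{A}$, and for $j,\ell\in\Lambda_N$ the tiling assumption \eqref{eq:tiling2} on $\Omega_A$ forces $j=\ell$. This yields $\Phi^A_{\Omega_B}(B^{-\tr}(j-\ell))=\delta_{j\ell}$ and hence $\mathcal{I}_N f(B^{-\tr}j)=f(B^{-\tr}j)$.

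The cardinality equality and uniqueness would then follow by dimension counting. The kernel identity just established produces $|\Lambda_N|$ shifted functions $\Phi^A_{\Omega_B}(\cdot-B^{-\tr}j)\in\mathcal{H}_N$ that are linearly independent (evaluate at the nodes), so $\dim\mathcal{H}_N\ge|\Lambda_N|$ and hence $|\Lambda_N^\dag|\ge|\Lambda_N|$. For the reverse inequality and the uniqueness statement, I would verify injectivity of the evaluation map directly: if $p=\sum_{k\in\Lambda_N^\dag}c_k\phi_k$ vanishes at every $B^{-\tr}j$ with $j\in\Lambda_N$, then multiplying by $\e^{-2\pi i\,\ell^{\tr}N^{-1}j}$ and summing over $j\in\Lambda_N$ reduces the inner sum via \thmref{th:DFT}, together with the tiling property of $\Omega_B$ excluding nontrivial $k\equiv\ell\pmod{N^\tr}$ in $\Lambda_N^\dag$, to $|\det N|\,\delta_{k\ell}$, forcing $c_\ell=0$ for every $\ell\in\Lambda_N^\dag$. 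This gives $|\Lambda_N^\dag|=\dim\mathcal{H}_N\le|\Lambda_N|$, and combined with the previous inequality yields the equality.

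For the periodization formula I would appeal to Poisson summation. The defining integral exhibits $\Phi_{\Omega_B}$ as the inverse Fourier transform of $|\Omega_B|^{-1}\chi_{\Omega_B}$, so $\widehat{\Phi_{\Omega_B}}(\xi)=|\Omega_B|^{-1}\chi_{\Omega_B}(\xi)$. Poisson summation over the lattice $A\ZZ^d$ then gives
\begin{align*}
\sum_{k\in\ZZ^d}\Phi_{\Omega_B}(x+Ak)=\frac{1}{|\det A|}\sum_{j\in\ZZ^d}\widehat{\Phi_{\Omega_B}}(A^{-\tr}j)\,\e^{2\pi i\,j^{\tr}A^{-1}x},
\end{align*}
and the support of $\widehat{\Phi_{\Omega_B}}$ restricts the $j$-sum to $j\in\Lambda_N^\dag$, while $|\Omega_B|\cdot|\det A|=|\det N|$ matches the constants, reproducing the formula for $\Phi^A_{\Omega_B}$. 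The step I expect to require the most care is the invocation of \thmref{th:DFT} after swapping $A\leftrightarrow B$ in step one: the hypothesis $\Lambda_N^\dag=\Lambda_{N^\tr}$ is what legitimizes that swap, and the boundary asymmetry of the spectral sets emphasized in the introduction is exactly what makes this hypothesis a genuine condition to verify rather than a tautology.
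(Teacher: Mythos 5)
The paper states \thmref{thm:interpolation} without proof, deferring to \cite{LSX}, so there is no in-paper argument to compare against; judged on its own, your proposal is correct and follows the natural route. The evaluation $\Phi^A_{\Omega_B}(B^{-\tr}(j-\ell))=\delta_{j,\ell}$ via \thmref{th:DFT} applied to the swapped data $(B,A,N^{\tr})$, combined with the non-overlapping tiling of $\Omega_A$ to rule out distinct congruent nodes, is exactly the right mechanism; and your two-sided count (the shifted kernels give $|\Lambda_N^\dag|=\dim\mathcal{H}_N\ge|\Lambda_N|$, while injectivity of the evaluation map --- which is in substance \thmref{thm:2.4} applied to $\langle p,p\rangle_N$ --- gives the reverse) correctly yields both uniqueness and $|\Lambda_N|=|\Lambda_N^\dag|$. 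Two minor caveats. First, the Poisson-summation step for the periodization identity is only formal, since $\Phi_{\Omega_B}$ is not absolutely summable over the lattice; the identity should be read in the same distributional or symmetric-limit sense in which the paper itself invokes Poisson summation for Proposition~\ref{pro:interpolation}. Second, once one observes that $\Lambda_N^\dag$ is by definition the set $\Lambda_{N'}$ attached to the swapped pair with $N'=A^{\tr}B=N^{\tr}$, \thmref{th:DFT} applies to the sum over $\Lambda_N^\dag$ directly, so the hypothesis $\Lambda_N^\dag=\Lambda_{N^{\tr}}$ carries less of the load in your argument than your closing remark suggests; its role is to identify $\Lambda_N^\dag$ with the residue system defined through $\Omega_A$, which is where the boundary bookkeeping genuinely enters.
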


The above results have been used to develop a discrete Fourier analysis on a
hexagon in \cite{LSX}. In the following section, we apply it to the rhombic
dodecahedron.


\section{Discrete Fourier analysis on the rhombic dodecahedron}
\setcounter{equation}{0}

In this section we develop a discrete Fourier analysis on the rhombic dodecahedron.
It contains five subsections.


\subsection{Face-centered cubic lattice and Fourier analysis}
We consider the face-centered cubic (fcc) lattice given in Figure 3.1.

\begin{figure}[ht]
\centering
\includegraphics[width=0.6\textwidth]{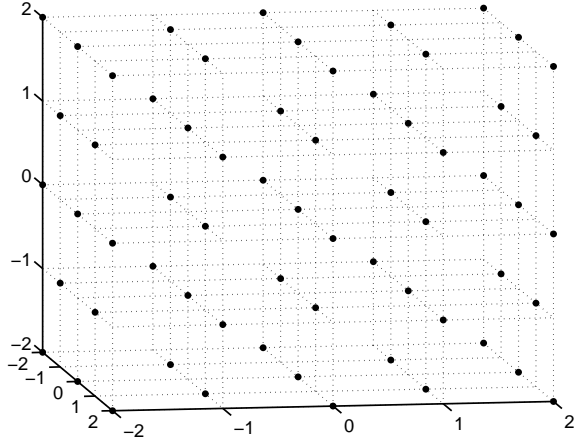}
\caption{Face-centered cubic (fcc) lattice.}
\end{figure}

\noindent
Just like the hexagon lattice, the fcc lattice offers the densest packing of $\RR^3$
with unit balls, which is the so-called Kepler's conjecture and proved recently in
\cite{H}.

The generator matrix $A$ of fcc lattice is given by of
\begin{align*}
   A  = \begin{pmatrix}
               0  &  1 &  1 \\
               1  &  0 &  1 \\
               1  &  1 &  0
              \end{pmatrix}.
\end{align*}
The domain that tiles $\RR^3$ with fcc lattice is the rhombic dodecahedron
(see Figure 3.2). Thus, the spectral set of fcc is
$$
        \Omega = \{x \in \RR^3:  -1<x_j \pm x_i \le 1,  1 \le i<j \le 3 \}.
$$
The strict inequality in the definition of $\Omega$ reflects our requirement that
the tiling of the spectral set has no overlapping.

\begin{figure}[htb]
\centering
\includegraphics[width=0.6\textwidth]{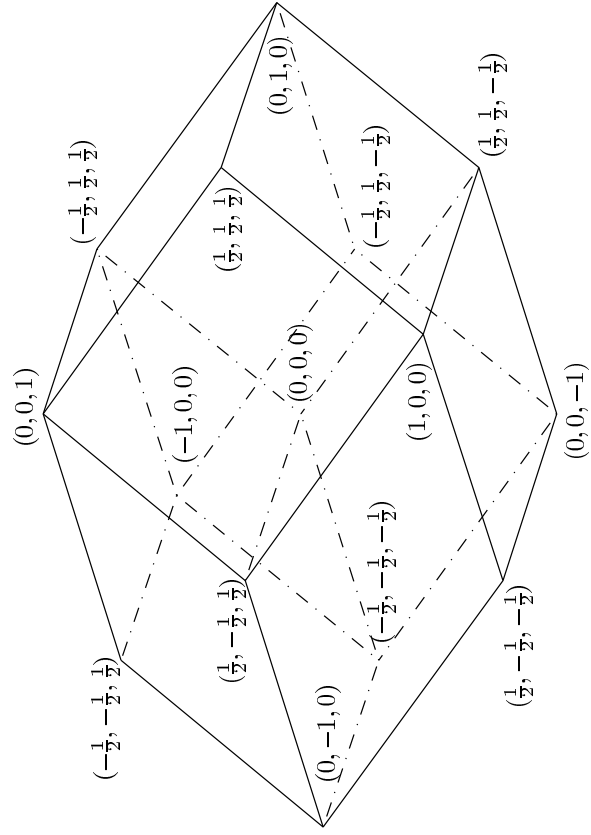}
\caption{Rhombic dodecahedron}
\end{figure}

Motivated by the study of \cite{LSX, Sun, Sun2}, we shall use homogeneous
coordinates $\tb = (t_1,t_2,t_3,t_4)$, where $t_1+t_2+t_3+t_4=0$, in $\RR^4$
for our analysis on the rhombic dodecahedron in $\RR^3$. The advantage is
that our formulas become more symmetric and the symmetry becomes more
transparent under the homogeneous coordinates.  Throughout the rest of this
paper, we adopt the convention of using bold letters,
such as $\tb$, to denote the points in the space
\begin{align*}
 \RR_H^4 := \left\{ \tb = (t_1,t_2,t_3,t_4)\in \RR^4:
  t_1+t_2+t_3+t_4=0 \right\}.
\end{align*}
In other words, the bold letters such as $\tb$ and $\kb$ will always mean
homogeneous coordinates.  The transformation between $x \in \RR^3$ and
$\tb \in \RR_H^4$ is defined by
\begin{equation}\label{coordinate1}
  x = A \begin{pmatrix} t_1 \\t_2\\t_3 \end{pmatrix}  \quad \Longleftrightarrow \quad
       \begin{cases} x_1= t_2 + t_3 \\
            x_2 =   t_1+ t_3 \\
            x_3=    t_2 + t_1
          \end{cases}
\end{equation}
and $t_4 = -t_1-t_2-t_3$. Let us denote by $H$ and $U$ the matrices
\begin{equation*}
   H =  \begin{pmatrix} 1 & 0 & 0\\ 0 & 1 & 0\\ 0 & 0 & 1\\ -1 & -1 & -1
          \end{pmatrix}
\quad\hbox{and}\quad
  U = \frac12 \begin{pmatrix}
   -1  &  1 &  1 \\ 1  & -1 &  1 \\ 1  &  1 & -1 \\ -1  & -1 & -1
\end{pmatrix},
\end{equation*}
respectively. The columns of the matrix $U$ are orthonormal and $U^\tr U = I$.
We then have $A = U^\tr H$ and the inverse transform of \eqref{coordinate1} is
 \begin{equation} \label{coordinate}
   \tb = U x
    \quad \Longleftrightarrow \quad
       \begin{cases} t_1=  \tfrac{1}{2}(-x_1+x_2+x_3) \\
            t_2 =   \tfrac{1}{2}(x_1-x_2+x_3) \\
            t_3=   \tfrac{1}{2} (x_1+x_2-x_3) \\
            t_4 =  \tfrac{1}{2}(-x_1-x_2-x_3).
       \end{cases}
\end{equation}

In the homogenous coordinates, the spectral set is $\Omega_H: =
\{\tb=U x: x \in \Omega\}$ which, upon using \eqref{coordinate}, results to
\begin{equation}\label{Omega}
   \Omega_H = \left\{\tb \in \RR_H^4:
             -1< t_i-t_j  \leq  1,  1\leq i < j \leq 4\right\}.
\end{equation}
Figure 3.3 shows again the rhombic dodecahedron with vertices labeled in
the homogeneous coordinates.

\begin{figure}[ht]
\centering
\includegraphics[width=0.6\textwidth]{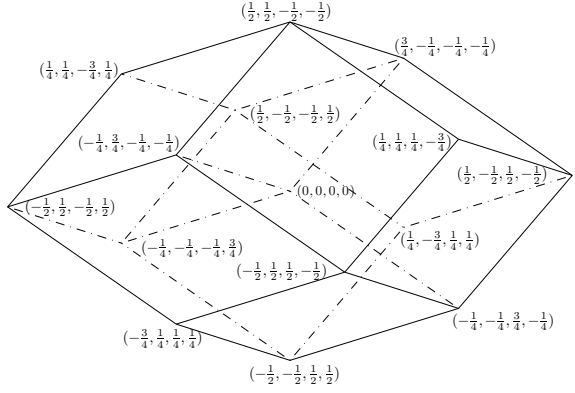}
\caption{Rhombic dodecahedron labeled in homogeneous coordinates.}
\end{figure}

Under the change of variables \eqref{coordinate}, the integral over $\Omega_H$ is
given by
\begin{equation} \label{integral}
   \frac{1}{|\Omega_H|}\int_{\Omega_H} f(\tb)d\tb =  \frac{1}{|\Omega |}
      \int_{\Omega} f(x) dx.
\end{equation}

By Fuglede theorem, $\left\{e^{2\pi i\, k^{\tr} A^{-1} x}: k \in \ZZ^3  \right\}$
forms an orthonormal basis in $L^2(\Omega)$. We would like to reformulate
the exponential functions $e^{2\pi i\, k^{\tr} A^{-1} x}$ so that they are
indexed by homogeneous coordinates. For this purpose, we denote by
\begin{equation*}
\ZZ^4_H : = \ZZ^4 \cap \RR^4_H = \left\{ \kb \in \ZZ^4:
        k_1+k_2+k_3+k_4=0  \right\}
\end{equation*}
the set of integers in homogeneous coordinates and introduce the notation
\begin{equation} \label{HH}
   \HH : =  \left\{ \kb \in\ZZ^4_H:
   k_1\equiv k_2\equiv k_3 \equiv k_4\!\!\!\pmod{4}\right\}.
\end{equation}
The definition of the matrix $H$ shows that if $k \in \ZZ^3$ then $H k \in \ZZ_H^4$.
For $k \in \ZZ^3$, set $\jb = 4 H (A^\tr A)^{-1} k \in \ZZ_H^4$. A quick computation
reveals that $\jb \in \HH$. Moreover, given $\jb \in \HH$, it follows from the fact
$A= U^\tr H$ that $k = \frac{1}{4} A^\tr U^\tr \jb$, which is easily seen to be in $\ZZ^3$. Furthermore, we have
$$
   k^\tr A^{-1} x = \tfrac{1}{4} \jb^\tr U A A^{-1}x  = \tfrac{1}{4} \jb^\tr U x  =
        \tfrac{1}{4} \jb \cdot \tb.
$$
Consequently, we can index the exponentials by $\jb \in \HH$ and the exponent
$2 \pi i k^\tr A^{-1}x$ becomes $\frac{\pi i}{2} \jb \cdot \tb$. Let us introduce the
notation
\begin{align}\label{phi}
   \phi_{\jb}(\tb) : =  e^{\frac{\pi i}{2} \jb \cdot \tb},    \qquad   \jb \in \HH.
\end{align}
Then, using \eqref{coordinate} and recalling \eqref{eq:orthonormal},
the Fuglede Theorem becomes the following:

\begin{prop}  \label{prop:ortho-H}
For $\kb,\jb \in \HH$,
\begin{align*}
    \langle \phi_{\kb},  \phi_{\jb}\rangle
    = \frac12 \int_{\Omega_H} \phi_{\kb}(\tb) \overline{\phi_{\jb}(\tb)} d\tb
     = \delta_{\kb, \jb}. 
\end{align*}
Furthermore, $\{\phi_\jb: \jb \in \HH\}$ is an orthonormal basis of
$L^2(\Omega_H)$.
\end{prop}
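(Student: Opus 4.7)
The plan is to reduce Proposition~\ref{prop:ortho-H} to the Fuglede theorem (Theorem~\ref{Fuglede}) applied to the fcc lattice, using the reindexing $\ZZ^3 \leftrightarrow \HH$ and the isometric change of variables $x\mapsto \tb = Ux$ that have already been set up immediately before the statement.

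First, I would apply Fuglede's theorem to the lattice $A\ZZ^3$ with its spectral set $\Omega$ to conclude that $\{\e^{2\pi i\, k^{\tr} A^{-1} x}: k\in\ZZ^3\}$ is an orthonormal basis of $L^2(\Omega)$ for the normalized inner product $\frac{1}{|\Omega|}\int_{\Omega}(\cdot)\overline{(\cdot)}\,dx$. Here $|\Omega|=\sqrt{\det(A^{\tr}A)}=|\det A|=2$.

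Second, I would verify that $k\mapsto \jb:=4H(A^{\tr}A)^{-1}k$ is a bijection from $\ZZ^3$ onto $\HH$. The paragraph preceding the proposition already checks that $\jb\in\HH$ and exhibits the candidate inverse $k=\tfrac{1}{4}A^{\tr}U^{\tr}\jb$; the point is that this inverse returns a vector in $\ZZ^3$ \emph{exactly} when all components of $\jb$ are congruent mod $4$, that is, when $\jb\in\HH$. The same paragraph also records the identity $k^{\tr} A^{-1}x=\tfrac{1}{4}\jb\cdot\tb$, which shows that the exponentials appearing in Fuglede's theorem are precisely the functions $\phi_{\jb}(\tb)$ in the new variables.

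Finally, I would push the orthonormality through the change of variables using $U^{\tr}U=I$. Because $U$ is an isometric embedding of $\RR^3$ onto $\RR_H^4$, the image $\Omega_H=U(\Omega)$ has the same $3$-dimensional volume as $\Omega$, so $|\Omega_H|=2$ and the identity \eqref{integral} holds. Substituting $\e^{2\pi i k^{\tr}A^{-1}x}=\phi_{\jb}(\tb)$ into Fuglede's orthonormality and applying \eqref{integral} yields $\tfrac{1}{2}\int_{\Omega_H}\phi_{\kb}\overline{\phi_{\jb}}\,d\tb=\delta_{\kb,\jb}$. Completeness transfers as well, since pullback by $U$ is an isometric isomorphism $L^2(\Omega_H)\to L^2(\Omega)$. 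The only real obstacle is bookkeeping: confirming that $\ZZ^3\leftrightarrow\HH$ is a genuine bijection (neither missing elements of $\HH$ nor landing outside it) and that the normalization constant $\tfrac{1}{2}$ in the proposition matches $1/|\Omega_H|$; both are immediate from the explicit matrix identities $4H(A^{\tr}A)^{-1}$ and $\tfrac{1}{4}A^{\tr}U^{\tr}$ sketched just above.
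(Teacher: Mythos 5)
Your proposal is correct and follows exactly the route the paper takes: the paper presents Proposition~\ref{prop:ortho-H} as the direct translation of Fuglede's theorem through the bijection $k\mapsto \jb = 4H(A^{\tr}A)^{-1}k$ between $\ZZ^3$ and $\HH$ and the isometric change of variables $\tb = Ux$, with the normalization $\tfrac12 = 1/|\Omega_H|$ coming from $|\det A|=2$. Your write-up merely makes explicit the bookkeeping (bijectivity of the reindexing, transfer of completeness under the isometry) that the paper leaves to the reader in the paragraph preceding the statement.
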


Given $f$ defined on $\Omega$, the mapping \eqref{coordinate} shows that
$f(x) = f(U^\tr \tb) = g(\tb)$ is the function in homogeneous coordinates. Since
$A = U^\tr H$, a function $f$ being periodic with respect to the lattice $A\ZZ^d$
becomes, in homogeneous coordinates, the following definition:

\begin{defn} \label{H-periodic}
A function $f$ is $H$-periodic if it is periodic with respect to the fcc lattice;
that is, $f(\tb) = f(\tb + H k)$ for $x\in \Omega_H$ and $k \in \ZZ^3$.
\end{defn}

Using the explicit form of the matrix $H$, it is easy to see that the following holds:

\begin{lem} \label{lm:periodicity}
A function $f(\tb)$ is $H$-periodic if and only if
\begin{align*}
 f(\tb) = f(\sb), \quad \tb- \sb \in \ZZ_H^4,
\end{align*}
or equivalently
\begin{align*}
 f(\tb+k \eb_{i,j}) = f(\tb), \qquad k \in \ZZ,  \quad 1\leq i<j\leq 4,
\end{align*}
where $\eb_{i,j}: = e_i - e_j$ and $\{e_1, e_1,e_3,e_4\}$ is the standard basis
of $\RR^4$.
\end{lem}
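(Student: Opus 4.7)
The plan is to unwind the definition of $H$-periodicity in terms of the columns of the matrix $H$ and then identify the subgroup they generate with $\ZZ_H^4$.

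First I would read off the columns of $H$ from its explicit form: they are exactly the vectors $\eb_{1,4}, \eb_{2,4}, \eb_{3,4}$ in $\RR_H^4$. Thus the image $H\ZZ^3$ equals the $\ZZ$-span of $\{\eb_{1,4}, \eb_{2,4}, \eb_{3,4}\}$ inside $\RR_H^4$. So $f$ is $H$-periodic (in the sense of Definition 3.1, transported to homogeneous coordinates) if and only if $f(\tb+\eb_{i,4}) = f(\tb)$ for $i=1,2,3$, and more generally $f(\tb) = f(\tb + \m)$ for every $\m$ in this $\ZZ$-span.

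The key identification is $H\ZZ^3 = \ZZ_H^4$. One inclusion is immediate since each column of $H$ is an integer vector whose coordinates sum to zero. For the reverse inclusion, given any $\kb = (k_1,k_2,k_3,k_4) \in \ZZ_H^4$, the constraint $k_1+k_2+k_3+k_4=0$ allows me to eliminate $k_4$ and write
\begin{equation*}
\kb = k_1 \eb_{1,4} + k_2 \eb_{2,4} + k_3 \eb_{3,4} \in H\ZZ^3,
\end{equation*}
which proves the equivalence of $H$-periodicity with $f(\tb) = f(\sb)$ whenever $\tb - \sb \in \ZZ_H^4$.

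For the second equivalent form, the point is that the full collection $\{\eb_{i,j}: 1\le i<j\le 4\}$ generates the same group $\ZZ_H^4$. Indeed, if $i<j<4$ then $\eb_{i,j} = \eb_{i,4} - \eb_{j,4}$, so every $\eb_{i,j}$ lies in the $\ZZ$-span of $\eb_{1,4}, \eb_{2,4}, \eb_{3,4}$, while conversely those three generators are already among the $\eb_{i,j}$. Hence invariance of $f$ under the translations $\tb \mapsto \tb + k\eb_{i,j}$ for all $k\in \ZZ$ and $1\le i<j\le 4$ is equivalent to invariance under $\ZZ_H^4$, completing the chain of equivalences. No real obstacle arises here; the argument is a direct group-theoretic unwinding of the definitions, and the only care needed is to keep consistent track of the $4$-dimensional homogeneous coordinates versus the $3$-dimensional lattice description.
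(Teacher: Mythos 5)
Your proof is correct and is exactly the computation the paper has in mind: it states the lemma follows easily from the explicit form of $H$ and omits the details, which you have supplied by identifying the columns of $H$ with $\eb_{1,4},\eb_{2,4},\eb_{3,4}$, verifying $H\ZZ^3=\ZZ_H^4$, and noting that the full set $\{\eb_{i,j}\}$ generates the same lattice. Nothing to add.
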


Evidently, the functions $\phi_\jb(\tb)$ in \eqref{phi} are $H$-periodic. Furthermore,
Proposition \ref{prop:ortho-H} shows that an $H$-periodic function $f$ can be
expanded into a Fourier series
\begin{equation}\label{H-Fourier}
   f \sim \sum_{\kb \in \HH } \wh f_\kb \phi_\kb(\tb), \quad\hbox{where} \quad
       \wh f_\kb := \frac12 \int_{\Omega_H} f(\tb) \phi_{-\kb}(\tb) d \tb.
\end{equation}


\subsection{Boundary of the rhombic dodecahedron}
In order to carry out the discrete Fourier analysis on the rhombic dodecahedron,
we need to have a detailed knowledge of the boundary of the polyhedral.

We use the standard set theory notations $\partial \Omega$,  $\Omega^\circ$
and $\overline{\Omega}$ to denote the boundary, the interior and the closure
of $\Omega$, respectively. Clearly $\overline{\Omega} = \Omega^\circ \cup
\partial \Omega$. A rhombic dodecahedron has 12 faces,  24 edges and 14
vertices.  Since we will consider points on the boundary, we need to
distinguish a face with its edges and without its edges, and an edge
with its end points and without its end points. In the following,
when we say a face or an edge, we mean the {\it open} set, that is,
without its edges or end points, respectively.

We shall work with homogeneous coordinates. To describe the boundary of
$\Omega_H$ we set $\NN_4: = \{1,2,3,4\}$. For $i,j \in \NN_4$ and $i\ne j$, define
$$
         F_{i,j}  = \{ \tb \in \overline{ \Omega}_H: t_i - t_j =1\}.
$$
There are a total $2 \binom{4}{2}  = 12$ distinct $F_{i,j}$ and it easy to see that
each $F_{i,j}$ stands for one face, with its edges, of the rhombic dodecahedron.

For nonempty subsets $I, J$ of $\NN_4$, define
\begin{align*}
    \Omega_{I,J} :=  \bigcap_{i\in I, j\in J} F_{i,j} =
        \left\{ \tb \in \overline{ \Omega}_H:\  t_j = t_i-1, \text{ for all } i\in I,\
  j\in J\right\}.
\end{align*}

\begin{lem}  \label{lm:boundary}
Let $I, J, I_i, J_i$ be nonempty subsets of $\NN_4$. Then
\begin{enumerate}
\item[(i)] $\Omega_{I,J} = \emptyset$ if and only if $I\cap J \neq \emptyset$.
\item[(ii)]  $ \Omega_{I_1,J_1} \cap \Omega_{I_2,J_2} =\Omega_{I, J}$  if
  $I_1 \cup I_2=I$ and $J_1\cup J_2=J$.
\end{enumerate}
\end{lem}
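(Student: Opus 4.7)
For part~(i), the "if" direction is nearly immediate: if $k \in I \cap J$, applying the defining relation at $(i,j) = (k,k)$ yields $t_k = t_k - 1$, which is impossible, so $\Omega_{I,J} = \emptyset$. For the converse I would exhibit an explicit point. Write $p = |I|$ and $q = |J|$ (necessarily $p + q \le 4$), set $a = q/(p+q)$, and define $t_i = a$ for $i \in I$, $t_j = a-1$ for $j \in J$, and $t_k = 0$ for any remaining $k \in \NN_4 \setminus (I \cup J)$. The sum $(p+q)a - q = 0$ places $\tb$ in $\RR_H^4$; all nontrivial pairwise differences are $\pm 1$, $\pm a$, or $\pm(1-a)$, all in $[-1,1]$, so $\tb \in \overline{\Omega}_H$; and the equalities $t_j = t_i - 1$ hold by construction.

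For part~(ii), the inclusion $\Omega_{I,J} \subseteq \Omega_{I_1,J_1} \cap \Omega_{I_2,J_2}$ is automatic, since every relation defining $\Omega_{I_s,J_s}$ is already one of the relations defining $\Omega_{I,J}$. For the reverse inclusion, take $\tb \in \Omega_{I_1,J_1} \cap \Omega_{I_2,J_2}$. Membership in $\Omega_{I_s,J_s}$ forces the coordinates indexed by $I_s$ to share a common value, say $a$ for $s = 1$ and $b$ for $s = 2$, and the coordinates on $J_s$ to equal $a-1$ or $b-1$, respectively. To land in $\Omega_{I_1 \cup I_2,\, J_1 \cup J_2}$ it remains to verify $t_j = t_i - 1$ on the cross blocks $I_1 \times J_2$ and $I_2 \times J_1$, which is equivalent to the single identity $a = b$. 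This is enforced by the bounding inequalities defining $\overline{\Omega}_H$: for $i \in I_1$, $j \in J_2$, the requirement $|t_i - t_j| \le 1$ reads $|(a-b)+1| \le 1$, giving $a \le b$, while the symmetric calculation using $I_2 \times J_1$ gives $b \le a$.

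The only mildly delicate point, which I expect to be the main (modest) obstacle, is to check that the argument does not collapse when some $I_s \cap J_t$ happens to be nonempty. In that situation part~(i) already forces $\Omega_{I,J} = \emptyset$, and I must then verify that the intersection on the left is also empty. If $k \in I_s \cap J_s$ for some $s$, then $\Omega_{I_s,J_s}$ itself is empty by part~(i); if $k \in I_s \cap J_t$ with $s \ne t$, the overlap forces $a$ and $b$ to differ by $1$, after which one of the cross bounds from $\overline{\Omega}_H$ would demand $|t_i - t_j| = 2$, a contradiction. Once these degenerate cases are dispatched, the whole lemma reduces to inspecting the equality cases of the defining inequalities of $\overline{\Omega}_H$, which is the conceptual heart of the argument.
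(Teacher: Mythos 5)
Your proof is correct and follows essentially the same route as the paper's: both arguments pin down the cross-block relations in part~(ii) by squeezing $t_{i_1}-t_{i_2}$ (your $a-b$) between the two inequalities $t_{i_1}-t_{j_2}\le 1$ and $t_{j_1}-t_{i_2}\ge -1$ coming from $\overline{\Omega}_H$. The only differences are cosmetic: you supply an explicit witness point for the non-emptiness in part~(i), which the paper dismisses as obvious, and you spell out the degenerate overlap cases a bit more fully.
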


\begin{proof}
It is obvious that $\Omega_{I,J} \neq  \emptyset$ if $I\cap J = \emptyset$. On the
other hand, if $I\cap J \neq \emptyset$ and $i= j\in I\cap J$,  then $t_i-t_j=0 \neq 1$,  which shows that $\Omega_{I,J}=\emptyset$. This proves (i).

If either $I_1 \cap J_1 \neq \emptyset$ or $I_2 \cap J_2 \neq \emptyset$, then
$\Omega_{I_1,J_1}\cap \Omega_{I_2,J_2} =\Omega_{I, J} = \emptyset$ by (i).
If $I_1 \cap J_1 =  I_2 \cap J_2 =\emptyset$ and $i_\nu \in I_\nu,
 j_\nu \in J_\nu$ for $\nu=1,2$, then
 \begin{align*}
        t_{i_1} - t_{j_2}  =   t_{i_1} - t_{i_2}+1 \leq 1 \quad\hbox{and}\quad
                  -1 \leq t_{j_1} - t_{i_2}  =   t_{i_1} - t_{i_2}-1,
 \end{align*}
which implies $t_{i_1} -t_{i_2} =0$ so that $t_{j_1} =t_{j_2} = t_{i_1}-1 =t_{i_2}-1$
and proves $\Omega_{I_1,J_1}\cap \Omega_{I_2,J_2} =\Omega_{I, J}$.
 \end{proof}

Edges are intersections of faces and vertices are intersections of edges.
The Lemma \ref{lm:boundary} gives us information about the intersections.
To make clear the structure of the boundary $\partial \Omega_H$, we introduce
the notation
\begin{align*}
  &\CK = \left\{(I,J):  I, J \subset \NN_4; \   I\cap J = \emptyset \right\},\\
  &\CK_0 = \left\{ (I,J)\in \CK:\ i<j, \,\, \hbox{for all}\,\, (i,j) \in (I, J) \right\}.
\end{align*}

\begin{defn}
For $(I,J)\in \CK$, the boundary element $B_{I,J}$ of the dodecahedron,
\begin{align*}
  B_{I,J}: = \left\{\tb \in \Omega_{I,J}: \  \tb \not \in \Omega_{I_1,J_1}
\text{ for all }
  (I_1,J_1)\in \CK \text{ with } |I|+|J| < |I_1| + |J_1| \right\},
\end{align*}
is called a face if $|I|+|J| =2$, an edge if $|I|+|J|=3$ and a vertex if $|I|+|J|=4$.
\end{defn}

For the faces and the edges, the boundary elements represent the interiors.
In fact, it is easy to see that $B_{\{i\},\{j\}} = F_{i,j}^\circ$ and, for example,
$B_{\{i\}, \{j,k\}} = (F_{i,j} \cap F_{i,k})^\circ$ for distinct integers $i, j ,k \in \NN_4$.

Furthermore,  for $0 < i,j < i+j \le 4$, we define
\begin{align} \label{CK}
\begin{split}
   \CK^{i,j}: =&\, \left\{(I,J) \in \CK:\  |I| = i,\ |J| =j  \right\}, \quad
       B^{i,j}: = \bigcup_{(I, J) \in \CK^{i,j}}  B_{I,J} \\
   \CK^{i,j}_0: = &\, \left\{(I,J) \in \CK_0:\  |I| = i,\ |J| =j  \right\}, \quad
       B^{i,j}_0: =\bigcup_{(I, J) \in \CK_0^{i,j}}  B_{I,J}.
\end{split}
\end{align}
Note that $B^{i,j}$ is the union of boundary points in those $B_{I,J}$
for which $|I|=i$ and $|J|=j$.

\begin{prop} \label{prop:BIJ} 
Let $(I,J) \in \CK$ and $(I_1,J_1)\in \CK$.
\begin{enumerate}
\item[(i)]  $ B_{I,J} \cap B_{I_1,J_1} = \emptyset$,  if $I \neq I_1$ and $J\neq J_1$.
\item[(ii)] $\overline{\Omega}_H\setminus \Omega_H^{\circ}
 =  \bigcup_{(I,J)\in \CK}  B_{I,J}  = \bigcup_{0<i,j<i+j\le 4} B^{i,j}$.
 \item[(iii)] $\Omega_H\setminus \Omega_H^{\circ}
 =  \bigcup_{(I,J)\in \CK_0}  B_{I,J}  = \bigcup_{0<i,j<i+j\le 4} B_0^{i,j}$.
\end{enumerate}
\end{prop}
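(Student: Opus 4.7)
The plan is to exploit Lemma~\ref{lm:boundary} throughout: part (ii) says that intersections of the $\Omega_{I,J}$ combine indices to give $\Omega_{I\cup I_1,\, J\cup J_1}$, while part (i) identifies the admissible pairs. The other essential ingredient is the elementary observation that $\overline{\Omega}_H \setminus \Omega_H^\circ$ consists of those $\tb$ with $t_i - t_j = 1$ for some $i \neq j$, whereas $\Omega_H \setminus \Omega_H^\circ$ additionally forces $i < j$ (the opposite inequalities being strict in the definition of $\Omega_H$).

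For (i), if $\tb \in B_{I,J} \cap B_{I_1,J_1}$ with $(I,J) \neq (I_1,J_1)$, then Lemma~\ref{lm:boundary}(ii) gives $\tb \in \Omega_{I \cup I_1,\, J \cup J_1}$. But $|I\cup I_1| + |J \cup J_1|$ strictly exceeds at least one of $|I|+|J|$, $|I_1|+|J_1|$, contradicting the maximality clause in the definition of one of the two boundary elements.

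For (ii), the inclusion $\bigcup_{(I,J)\in\CK} B_{I,J} \subseteq \overline{\Omega}_H \setminus \Omega_H^\circ$ is immediate from the definitions, since any point of $\Omega_{I,J}$ saturates $t_i - t_j = 1$. For the converse, I associate to each $\tb \in \overline{\Omega}_H \setminus \Omega_H^\circ$ the pair
\[
I^* := \{i \in \NN_4 : \exists j,\, t_i - t_j = 1\}, \qquad J^* := \{j \in \NN_4: \exists i,\, t_i - t_j = 1\},
\]
and verify: (a) $I^* \cap J^* = \emptyset$ (else an index $i$ in both yields witnesses $j_1, i_1$ with $t_{i_1} - t_{j_1} = 2$, violating $\tb \in \overline{\Omega}_H$); (b) $I^* \times J^* \subseteq \{(i,j): t_i - t_j = 1\}$, so that $\tb \in \Omega_{I^*, J^*}$; (c) $(I^*, J^*) \in \CK$ is maximal, since by construction every pair $(i,j)$ with $t_i - t_j = 1$ has $i \in I^*$ and $j \in J^*$. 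Together these three facts place $\tb$ in $B_{I^*, J^*}$, yielding the first equality; the second follows from the partition $\CK = \bigcup_{0 < i,j < i+j \leq 4}\CK^{i,j}$ and the definition of $B^{i,j}$. Part (iii) is entirely parallel: since the strict inequalities in $\Omega_H$ preclude $t_i - t_j = 1$ whenever $i > j$, the indices $I^*, J^*$ automatically satisfy $i < j$ for every $(i,j) \in I^* \times J^*$, so $(I^*,J^*) \in \CK_0$.

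The main obstacle is step (b), which is where the geometry of the rhombic dodecahedron genuinely enters. For $i \in I^*$ and $j \in J^*$, pick witnesses $j_1, i_1$ with $t_i - t_{j_1} = 1 = t_{i_1} - t_j$. Then
\[
t_i - t_j = (t_i - t_{j_1}) + (t_{i_1} - t_j) - (t_{i_1} - t_{j_1}) = 2 - (t_{i_1} - t_{j_1}),
\]
and the constraint $t_i - t_j \leq 1$ forces $t_{i_1} - t_{j_1} \geq 1$; combined with $t_{i_1} - t_{j_1} \leq 1$ this gives $t_{i_1} - t_{j_1} = 1$ and hence $t_i - t_j = 1$. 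Everything else in the proof is bookkeeping on indices.
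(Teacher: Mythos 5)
Your proof is correct and follows essentially the same route as the paper: part (i) via Lemma~\ref{lm:boundary}(ii) combined with the maximality clause in the definition of $B_{I,J}$, part (ii) via the same construction of the pair $(I^*,J^*)$ of all indices witnessing $t_i-t_j=1$, and part (iii) via the observation that the strict inequalities defining $\Omega_H$ force $i<j$ for every such pair. Your step (b), showing that $t_i-t_j=1$ holds for \emph{every} $i\in I^*$, $j\in J^*$ via the identity $t_i-t_j=2-(t_{i_1}-t_{j_1})$, is a correct and welcome elaboration of a point the paper's proof passes over with ``clearly $\tb\in B_{I,J}$.''
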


\begin{proof}
If $B_{I,J}\cap B_{I_1,J_1}\neq \emptyset$ then $|I|+|J| = |I_1|+|J_1|$.
Moreover, if $\tb \in B_{I,J}\cap B_{I_1,J_1}$  then $\tb\in \Omega_{I,J}\cap
\Omega_{I_1,J_1} = \Omega_{I\cup I_1,J\cup J_1}$ by Lemma \ref{lm:boundary},
which implies that $|I|+|J|\geq  |I\cup I_1| +|J\cup J_1|$. Thus we must have
$I=I_1=I\cup I_1$ and $J=J_1=J\cup J_1$, which contradicts the assumption
and proves (i).

To prove (ii), we define for $\tb \in \partial \Omega_H$,
$I = \left\{ i\in \NN_4:  \exists j \in \NN_4 \text{ such that } t_i-t_j = 1 \right\}$ and
$J = \left\{ j\in \NN_4:  \exists i \in \NN_4 \text{ such that } t_i-t_j = 1 \right\}$.
Clearly $(I,J)\in \CK$ and $\tb \in B_{I,J}$, which proves the first equal sign of
(ii). The second equal sign follows from the definition of $B^{i,j}$.
Since
\begin{align*}
\Omega_H\setminus\Omega_H^{\circ} =\left\{ \tb \in \overline{\Omega}_H\setminus
\Omega_H^\circ: t_i-t_j  > -1, \,\,   \forall i<j \right\},
\end{align*}
the part (iii) follows immediately from (ii).
\end{proof}

The above proposition provides a decomposition of the boundary into
non-overlapping boundary elements. To make each boundary element
explicit, we use symmetry.  Let $\mathcal{G} =S_4$ be the
permutation group of four elements. For $\tb \in \RR^4_H$ and
$\sigma \in \CG$,  the action of $\sigma$ on $\tb$ is denoted by
$\tb \sigma$, which means the permutation of the elements of $\tb$
by $\sigma$. A moment of reflection shows that, for $(I,J) \subset
\CK$,
$$
     B^{|I|,|J|} = \bigcup_{\sigma \in \G} B_{I,J}\sigma
     : = \left\{ \tb \sigma: \tb \in B_{I,J},  \sigma\in \mathcal{G} \right\}.
$$

Later in the section we will need to consider points on the boundary elements
that are congruent module $H$.  For $(I,J) \subset \CK$ we further define
\begin{align*}
   [B_{I,J}]:= \left\{ B_{I,J} + \kb : \kb \in \ZZ_H^4\right\}\cap
       \overline{\Omega}_H
     = \left\{ \tb+\kb\in \overline{\Omega}_H:  \tb \in B_{I,J},  \kb \in \ZZ_H^4 \right\}.
\end{align*}
Since $[B_{I,J}]$ is a subset of $\overline{\Omega}_H$ and $B_{I,J}$ is
a boundary element, we see that $[B_{I,J}]$ consists of exactly those
boundary elements that can be obtained from $B_{I,J}$ by congruent modulus
$H$, as confirmed by the following lemma.

\begin{lem} \label{lem:s=equiv=t} 
If $\tb,\sb \in \Omega_H$  and $\sb \equiv \tb \pmod{H}$, then $\tb=\sb$.
\end{lem}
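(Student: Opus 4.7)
The plan is to set $\kb := \sb - \tb$ and show that $\kb = 0$. By \lemref{lm:periodicity}, the congruence $\sb \equiv \tb \pmod{H}$ is equivalent to $\kb \in \ZZ_H^4$, so $\kb$ has integer entries summing to zero. The problem therefore reduces to a finite combinatorial check: no nonzero vector in $\ZZ_H^4$ can be realized as the difference of two points of $\Omega_H$.

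The key quantitative step is to bound each pairwise difference. Writing
\[k_i - k_j = (s_i - s_j) - (t_i - t_j),\]
and inserting the defining inequalities of $\Omega_H$ from \eqref{Omega} for both $\tb$ and $\sb$, one finds that $k_i - k_j$ lies strictly between $-2$ and $2$; being an integer, it must equal $-1$, $0$, or $1$ for every pair $i,j$.

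With this pairwise bound in hand, let $M := \max_i k_i$ and $m := \min_i k_i$. Then $M - m \le 1$, while the zero-sum relation $k_1+k_2+k_3+k_4 = 0$ forces $M \ge 0 \ge m$. The only possibilities are therefore $(M,m) \in \{(0,0), (1,0), (0,-1)\}$. The last two are incompatible with a vanishing sum: the entries would be nonnegative (resp.\ nonpositive) integers summing to zero while containing at least one nonzero coordinate. Hence $(M,m) = (0,0)$, which gives $\kb = 0$ and so $\sb = \tb$.

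The only subtlety is the asymmetric strict/weak nature of the defining inequalities of $\Omega_H$ (strict at $-1$, weak at $1$); this asymmetry is precisely what keeps $k_i - k_j$ strictly inside the open interval $(-2,2)$ rather than allowing the value $\pm 2$, and it must be tracked on both sides when writing out the bound. Beyond this bookkeeping, no serious obstacle is expected — the argument is a short integrality-plus-zero-sum deduction.
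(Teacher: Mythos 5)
Your proof is correct and follows essentially the same route as the paper's: both reduce to $\kb:=\sb-\tb\in\ZZ_H^4$, use the defining inequalities of $\Omega_H$ to force $k_i-k_j\in\{-1,0,1\}$ for all pairs, and then combine this with the zero-sum condition to conclude $\kb=0$. Your explicit attention to the strict-versus-weak inequalities (which keeps $k_i-k_j$ in the open interval $(-2,2)$) is the same point the paper relies on implicitly, so there is nothing to add.
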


\begin{proof}
By Lemma \ref{lm:periodicity}, if $\tb, \sb \in \Omega_H$ and $\tb \equiv \sb
\pmod{H}$, then $\sb - \tb \in \ZZ_H^4$ and, set $\kb := \sb-\tb$,
$-1\leq  k_i  -k_j \leq 1$  for all $i,j\in \NN_4$. The last condition means that
either $k_i \in \left\{0,1 \right\}$ for all $ i \in \NN_4$ or $ k_i \in \left\{0,-1\right\}$
for all $i \in \NN_4$. The homogenous condition $k_1+k_2+k_3+k_4=0$ then
shows that $k_1=k_2 =k_3=k_4=0$ or $\sb=\tb$.
\end{proof}

As an example, we have
\begin{align} \label{B_{1,{2,3}}}
\begin{split}
& B_{\{1\},\{2,3\}} = \left \{(t, t-1, t-1, 2-3t):  \tfrac12 < t <\tfrac34  \right \}, \\
& B_{\{1,2\},\{3\}}= \left\{ (1-t, 1-t, -t, 3t-2):  \   \tfrac12 < t <\tfrac34  \right\},
\end{split}
\end{align}
and from the explicit description of $B_{\{1\},\{2,3\}}$ we deduce
\begin{align} \label{[B{1,23}]}
  [B_{\{1\},\{2,3\}}] & = B_{\{1\},\{2,3\}} \cup \left (B_{\{1\},\{2,3\}} +(-1,1,0,0)\right)
     \cup \left (B_{\{1\},\{2,3\}} +(-1,0,1,0)\right)  \notag \\
       & = B_{\{1\},\{2,3\}} \cup B_{\{2\},\{1,3\}} \cup B_{\{3\},\{1,2\}}.
\end{align}
Others can be deduced similarly. The last equation indicates that $[B_{I,J}]$ is
a union of $B_{I',J'}$, which we make precise below.

Let $\sigma_{ij}$ denote the element in $\mathcal{G}$ that interchanges $i$
and $j$; then $\tb \sigma_{ij} = \tb - (t_i-t_j) \eb_{i,j}$. For a nonempty set
$I\subset \mathbb{N}_4$, define $\mathcal{G}_{I} := \left\{\sigma_{ij}:  i,j\in I\right\}$,
where we take $\sigma_{ij} = \sigma_{ji}$ and take $\sigma_{jj}$ as the identity
element. It is easy to verify that $\mathcal{G}_{I} $ forms a subgroup of
$\mathcal{G}= \mathcal{S}_4$ of order $|I|$.

\begin{lem}  \label{congruence} 
Let $(I,J)\in \mathcal{K}$. Then
\begin{align} \label{eq:[B]}
   [B_{I,J}] = \bigcup_{\sigma\in \mathcal{G}_{I\cup J}} B_{I,J}\sigma.
\end{align}
\end{lem}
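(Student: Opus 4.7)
The plan is to prove the two inclusions $\bigcup_{\sigma \in \mathcal{G}_{I \cup J}} B_{I,J}\sigma \subseteq [B_{I,J}]$ and $[B_{I,J}] \subseteq \bigcup_{\sigma \in \mathcal{G}_{I \cup J}} B_{I,J}\sigma$ separately, using the explicit parametrization $t_i = a+1$ for $i \in I$, $t_j = a$ for $j \in J$, and $a < t_l < a+1$ for $l \in L := \mathbb{N}_4 \setminus (I \cup J)$, valid for any $\tb \in B_{I,J}$.

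For the easier inclusion ($\supseteq$), fix $\sigma \in \mathcal{G}_{I \cup J}$ and $\tb \in B_{I,J}$. By definition $\sigma$ fixes $L$ pointwise and permutes $I \cup J$. A direct computation of $(\tb\sigma)_m = t_{\sigma(m)}$ using the parametrization gives
\[
  (\tb\sigma)_m - t_m = \begin{cases} -1, & m \in \hat I := I \cap \sigma^{-1}(J), \\ +1, & m \in \hat J := J \cap \sigma^{-1}(I), \\ 0, & \text{otherwise}. \end{cases}
\]
Since $\sigma$ is a bijection of $I \cup J$, one has $|\hat I| = |\hat J|$, so $\tb\sigma - \tb \in \mathbb{Z}_H^4$. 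The domain $\overline{\Omega}_H$ is defined by the symmetric inequalities $|t_p - t_q| \le 1$ and is therefore invariant under any coordinate permutation, giving $\tb\sigma \in \overline{\Omega}_H$ and hence $\tb\sigma \in [B_{I,J}]$.

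For the converse ($\subseteq$), write $\sb = \tb + \kb \in [B_{I,J}]$ with $\tb \in B_{I,J}$, $\kb \in \mathbb{Z}_H^4$. Translating the condition $|s_p - s_q| \le 1$ into integer constraints on $\kb$, and exploiting the \emph{strict} bounds $0 < t_l - a < 1$ and $|t_l - t_{l'}| < 1$ for $l, l' \in L$, yields: $|k_p - k_q| \le 1$ when $p, q$ lie in the same block among $\{I,J,L\}$; $k_p - k_q \in \{-1,0\}$ for $(p,q) \in I \times L$; $k_p - k_q \in \{0,1\}$ for $(p,q) \in L \times J$; and $k_p - k_q \in \{-2,-1,0\}$ for $(p,q) \in I \times J$. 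A finite case analysis on the distribution of $k$-values within each block, combined with $\sum_m k_m = 0$ and $|I|+|J|+|L|=4$, shows that the only admissible $\kb$ satisfy $k_m = -1$ for $m \in \hat I \subseteq I$, $k_m = +1$ for $m \in \hat J \subseteq J$, $k_m = 0$ otherwise, with $|\hat I| = |\hat J|$. For any bijection $\tau \colon \hat I \to \hat J$, the product $\sigma := \prod_{i \in \hat I} \sigma_{i,\tau(i)}$ lies in $\mathcal{G}_{I \cup J}$, and by the forward computation $\tb\sigma = \tb + \kb = \sb$, so $\sb \in B_{I,J}\sigma$.

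The main obstacle is the case analysis in ($\subseteq$): several patterns of integer values satisfy the pairwise bounds in isolation, and the argument has to exhaust them. The decisive mechanism is the homogeneity $\sum_m k_m = 0$: since each block of $\{I,J,L\}$ has $k$-spread at most $1$ and the inter-block shifts are tightly constrained, divisibility modulo $4$ (arising from $|\mathbb{N}_4| = 4$) forces the remaining freedom into the balanced swap patterns $\mathbf{1}_{\hat J} - \mathbf{1}_{\hat I}$ above. The boundary cases $L = \emptyset$ (vertices) must be checked separately, since the $I$--$L$ and $L$--$J$ constraints become vacuous and the raw $I$--$J$ spread can reach $2$; there too the homogeneity singles out the same swap-type solutions.
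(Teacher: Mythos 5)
Your overall strategy is the same as the paper's: prove both inclusions, parametrize $B_{I,J}$ explicitly ($t_i=a+1$ on $I$, $t_j=a$ on $J$, strictly between on $L$), and let the homogeneity $\sum_m k_m=0$ do the decisive work. The one genuine difference is in the converse inclusion: the paper first locates $\mathbf{s}$ in some boundary element $B_{I_1,J_1}$ (via Proposition \ref{prop:BIJ} and Lemma \ref{lem:s=equiv=t}), introduces the two levels $t$ and $s$, and shows $t=s$ by summing; you instead constrain the lattice vector $\mathbf{k}=\mathbf{s}-\mathbf{t}$ directly. Your route is self-contained and arguably cleaner, and your forward direction is also more careful than the paper's in that it handles a general $\sigma$ (in particular products of disjoint transpositions, which are genuinely needed for the vertex case $|I|=|J|=2$, where $[B_{I,J}]$ has $\binom{4}{2}=6$ elements and single transpositions only produce $5$ of them).

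There is, however, one concrete error in the converse: the constraint for $(p,q)\in L\times J$ should be $k_p-k_q\in\{-1,0\}$, not $\{0,1\}$. Indeed $s_p-s_q=(t_p-a)+(k_p-k_q)$ with $t_p-a\in(0,1)$, so $|s_p-s_q|\le 1$ forces $k_p-k_q\in\{-1,0\}$, exactly as in the $I\times L$ case; the correct chain is $k_I\le k_L\le k_J$ (up to unit gaps). As written, your constraint excludes the very solutions $\mathbf{k}=\mathbf{1}_{\hat J}-\mathbf{1}_{\hat I}$ with $\hat J\ne\emptyset$ and $L\ne\emptyset$ (there $k_l-k_q=0-1=-1$), so the case analysis based on the stated list would wrongly conclude $[B_{I,J}]=B_{I,J}$ for faces and edges, contradicting \eqref{[B{1,23}]}. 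With the sign corrected, the analysis does close as you describe: taking $c$ to be a common value on $L$ one gets $k_p\in\{c-1,c\}$ on $I$, $k_q\in\{c,c+1\}$ on $J$, and $4c=|\hat I|-|\hat J|$ with $\bigl||\hat I|-|\hat J|\bigr|\le 3$ forces $c=0$ and $|\hat I|=|\hat J|$; the $L=\emptyset$ vertex cases work the same way. So this is a fixable local slip rather than a gap in the method, but it must be fixed for the case analysis to yield the stated conclusion.
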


\begin{proof}
For any $i,j\in I\cup J$, the definition of $B_{I,J}$ shows that
$\tb\sigma_{ij}-\tb = (t_j-t_i)\eb_{ij} \in \ZZ_H^4$ for all $\tb\in B_{I,J}$.
It then follows from $\tb\sigma_{ij}\in \overline{\Omega}_H$ that
$B_{I,J}\sigma_{ij} \subseteq [B_{I,J}]$. Consequently,
$\bigcup_{\sigma\in  \mathcal{G}_{I\cup J}} B_{I,J}\sigma \subseteq [B_{I,J} ]$.

On the other hand, for any $\sb \in [B_{I,J}]$ there exists $\tb\in B_{I,J}$ such
that $\sb-\tb\in \ZZ_H^4$. It follows from Proposition \ref{prop:BIJ} and
Lemma \ref{lem:s=equiv=t} that $\sb\in B_{I_1,J_1}$ for a pair $(I_1,J_1)\in \CK$.
By the definitions of $B_{I,J}$ and $\overline{\Omega}_H$, there exist $t, s \in \RR$
such that
\begin{align*}
 &  -\tfrac{3}{4} \leq  t-1=t_j < t_l  < t_i=t \leq  \tfrac{3}{4}, \qquad  i\in I,   \  \
     j\in J, \  l\not \in I\cup J,\\
  & -\tfrac{3}{4} \leq s-1=s_j < s_l <s_i=s \leq \tfrac{3}{4}, \qquad  i\in I_1, \  \
       j\in J_1, \  l\not \in I_1\cup J_1.
\end{align*}
Since $\sb - \tb \in \ZZ_H^4$, the above inequalities imply that
$s_i -t_i \in \{-1, 0,1\}$ for all $i \in \NN_4$. We claim that $t
=s$.  Assume otherwise, say $s > t$. For $i \in I_1$, $s_i-t_i \ge
s-t>0$ so that $s_i-t_i=1$; while for $i \not\in I_1$, $s_i-t_i \ge
s-1-t>-1$ so that $s_i-t_i\in \left\{ 1,0\right\}$. It then follows
that $\sum_{i\in \NN_4}(s_i-t_i)>0$, which poses a contradiction to
the homogeneity of $\sb-\tb$.  Hence we must have $s=t$. With $s
=t$, it is then easy to see that $s_i-s=s_i-t_i\in \left\{0, -1
\right\}$ for $i\in I$,  $s_j-s+1=s_j-t_j \in  \left\{0, 1 \right\}$
for $j\in J$, and $s-1<s_l=t_l<s$ for $l\not \in I\cup J$. This
shows that $I\cup J = I_1\cup J_1$ and $\sum_{i \in I\cup J} t_i =
\sum_{i \in I_1\cup J_1} s_i$. Meanwhile, we note that $\sum_{i \in
I\cup J} t_i = t (|I|+|J|) -|J|$ and $\sum_{i \in I_1\cup J_1} s_i =
s (|I_1|+|J_1|) -|J_1|$. It then follows that  $|J|=|J_1|$ and
$|I|=|I_1|$.
%
%
Consequently, $\sb=\tb \sigma$ for a $\sigma\in \mathcal{G}_{I\cup
J}$ and $[B_{I,J}]\subseteq \bigcup_{\sigma\in \mathcal{G}_{I\cup
J}} B_{I,J}\sigma$. This completes the proof of the lemma.
\end{proof}

Since $\CK^{i,j}$ can be obtained from $\CK_0^{i,j}$ from the action of $\CG$,
it follows that
\begin{equation} \label{eq:Bij=[B]}
  B^{i,j} = \bigcup_{(I,J) \in \CK_0^{i,j}} [B_{I,J}]= \bigcup_{B \in B_0^{i,j}} [B],
        \qquad  0 < i,j < i+j \le 4.
\end{equation}
We also note that $[B_{I,J}]\cap [B_{I_1,J_1}] = \emptyset$ if $(I,J) \neq (I_1,J_1)$
for $(I,J)\in \mathcal{K}_0$ and $(I_1,J_1)\in \mathcal{K}_0$, which shows that
\eqref{eq:Bij=[B]} is a non-overlapping partition.

If $|I|+|J| =3$ or $B_{I,J}$ is an edge, then we have
\begin{align} \label{eq:B12}
\begin{split}
 &B^{1,2}   = [B_{\{1\},\{2,3\}}] \cup  [B_{\{1\},\{2,4\}}]\cup [B_{\{1\},\{3,4\}}]\cup
   [B_{\{2\},\{3,4\}}],\\
& B^{2,1} =  [B_{\{1,2\},\{3\}}]\cup  [B_{\{1,2\},\{4\}}]\cup [B_{\{1,3\},\{4\}}]
  \cup [B_{\{2,3\},\{4\}}],
\end{split}
\end{align}
where, recall that $B_{\{1\},\{2,3\}}$ and $B_{\{1,2\},\{3\}}$ are given in
\eqref{B_{1,{2,3}}},
\begin{align} \label{eq:Bij}
\begin{split}
& B_{\{1\},\{2,4\}} = B_{\{1\},\{2,3\}} \sigma_{34}, \qquad \quad\,
B_{\{1,2\},\{4\}} = B_{\{1,2\},\{3\}} \sigma_{34}, \\
& B_{\{1\},\{3,4\}} =B_{\{1\},\{2,3\}} \sigma_{24},
\quad B_{\{1,3\},\{4\}}= B_{\{1,2\},\{3\}} \sigma_{23}\sigma_{34}, \\
& B_{\{2\},\{3,4\}} = B_{\{1,2\},\{3\}} \sigma_{12} \sigma_{24}, \quad \,\,
B_{\{2,3\},\{4\}} = B_{\{1,2\},\{3\}} \sigma_{13}\sigma_{34}.
\end{split}
\end{align}
If $|I|+|J|=4$, then
\begin{align} \label{eq:B13}
\begin{split}
& B^{1,3} = \left[\{(\tfrac{1}{4}, \tfrac{1}{4}, \tfrac{1}{4},
 -\tfrac{3}{4})\}\right], \quad B^{2,2} = \left[\{(\tfrac{1}{2}, \tfrac{1}{2}, -\tfrac{1}{2},
 -\tfrac{1}{2}\})\right]\\
& B^{3,1} = \left[\{(\tfrac{3}{4}, -\tfrac{1}{4}, -\tfrac{1}{4},  -\tfrac{1}{4})\}\right].
\end{split}
\end{align}


\subsection{Dodecahedral Fourier partial sum}
In order to apply the general result on discrete Fourier analysis in the
previous section to fcc lattice, we choose $A=A$ and $B= nA$ with $n$
being a positive integer.  Then the matrix
\begin{align*}
  N = B^{\tr}A = \begin{pmatrix} 2n & n & n\\ n & 2n & n\\ n & n & 2n
         \end{pmatrix}
\end{align*}
has integer entries. Note that $N$ is now a symmetric matrix so that $\Lambda_N
 = \Lambda_{N^{\tr}}$, and it is easy to see that $\Lambda_N^\dag = \Lambda_N$.
Recall the definition of $\HH$ in \eqref{HH}.  Using again
$\jb = 4H (A^\tr A)^{-1}k \in \ZZ_H^4$, it is easy to see that $k \in \Lambda _N$
becomes $\jb \in \HH_n$, where
$$
\HH_n := \left\{ \kb \in \HH: \tfrac{\kb}{4n}\in \Omega_H\right\}
= \left\{\kb \in \HH:   -4n < k_i -k_j \leq  4n,\,  1\leq i < j \leq 4 \right\}.
$$
The finite dimensional space $\mathcal{H}_N$ of exponentials in
Theorem \ref{thm:2.4} becomes
$$
\mathcal{H}_{n} :=\mathrm{span} \left\{\phi_{\kb}:\  \kb \in \HH_n\right\}
  \qquad \text{with}\quad
\mathrm{dim} \, \mathcal{H}_{n} = \det (N) = 4n^3.
$$

Note that the points in $\HH_n$  are not symmetric under $\CG$, since points
on half of the boundary are not included. For reasons of symmetry, we further
define
$$
      \HH^*_n :=\left\{ \kb \in \HH: \tfrac{\kb}{4n}\in \overline{\Omega}_H \right\}
 = \left\{\kb \in \HH: -4n \leq k_i-k_j\leq 4n,\, 1\leq i< j \leq 4 \right \}.
$$

\begin{figure}[htb]
\centering
\includegraphics[width=0.6\textwidth]{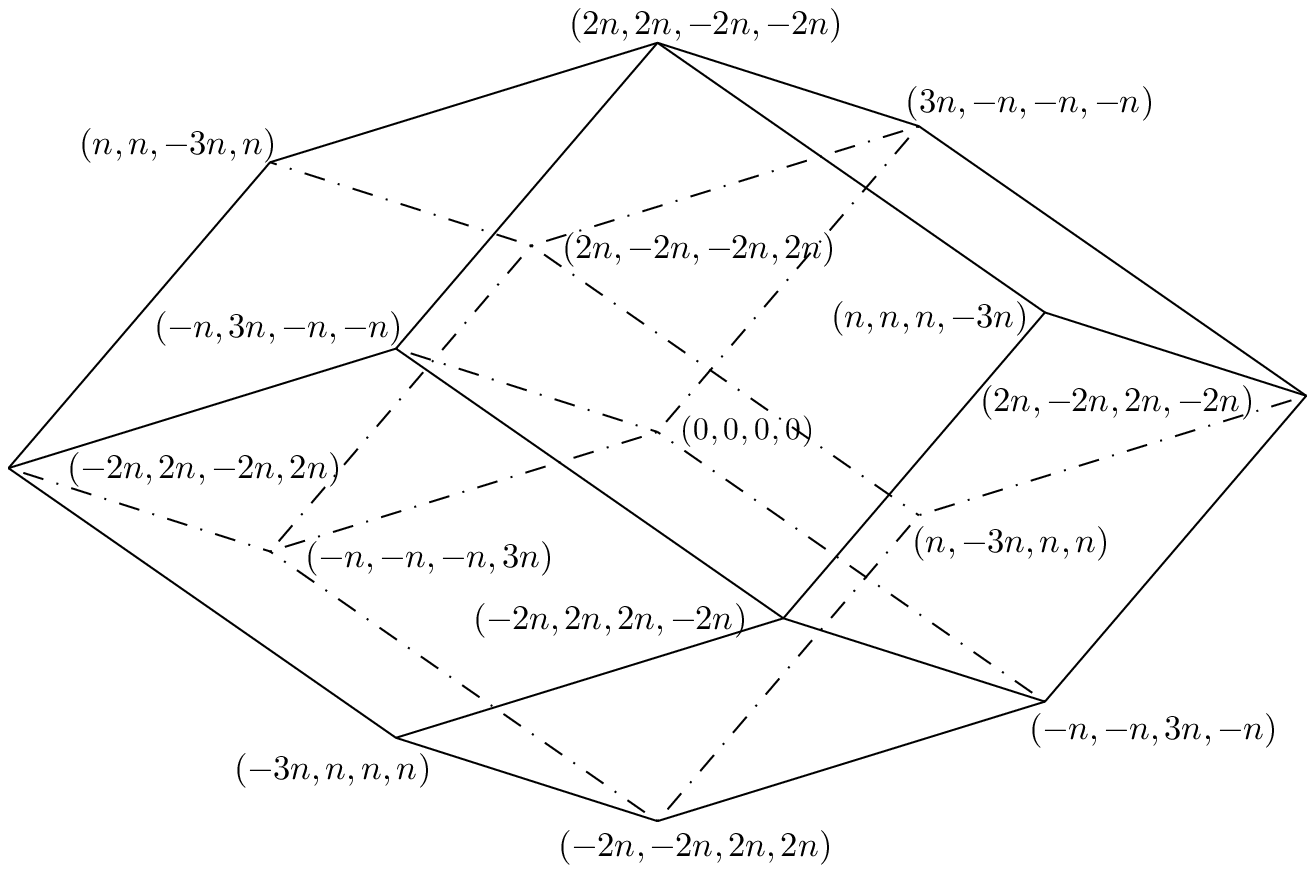}
\caption{$\HH_n^*=\left\{\tb\in \HH:\  -4n \leq  t_i-t_j \leq 4n\right\}$.}
\end{figure}

For the Fourier expansion \eqref{H-Fourier} of an $H$-periodic function, we
define its dodecahedral partial sum as
\begin{align} \label{PartialSum}
S_n f (\tb) := \sum_{\kb\in \HH_n^*} \langle f, \phi_{\kb}  \rangle
 \phi_{\kb}(\tb)
= \frac{1}{2}\int_{\Omega_H} f(\sb) D_n^H(\tb-\sb) d\sb,
\end{align}
where $D_n^H$ is the Dirichlet kernel for the dodecahedral partial sum
\begin{align} \label{def:Dn^H}
D_n^H(\tb) := \sum_{\kb\in \HH_n^*}  e^{\frac{\pi i}{2}\, \kb\cdot \tb}.
\end{align}

Our immediate goal is to find a compact formula for the Dirichlet kernel $D_n^H$.
We start with an observation that the index set $\HH_n^*$ can be partitioned into
four congruent parts, each within a parallelepiped, as shown in Figures 3.5-3.8.

\begin{figure}[htb]
\hfill\begin{minipage}[b]{0.48\textwidth}
\centering
\includegraphics[width=0.9\textwidth]{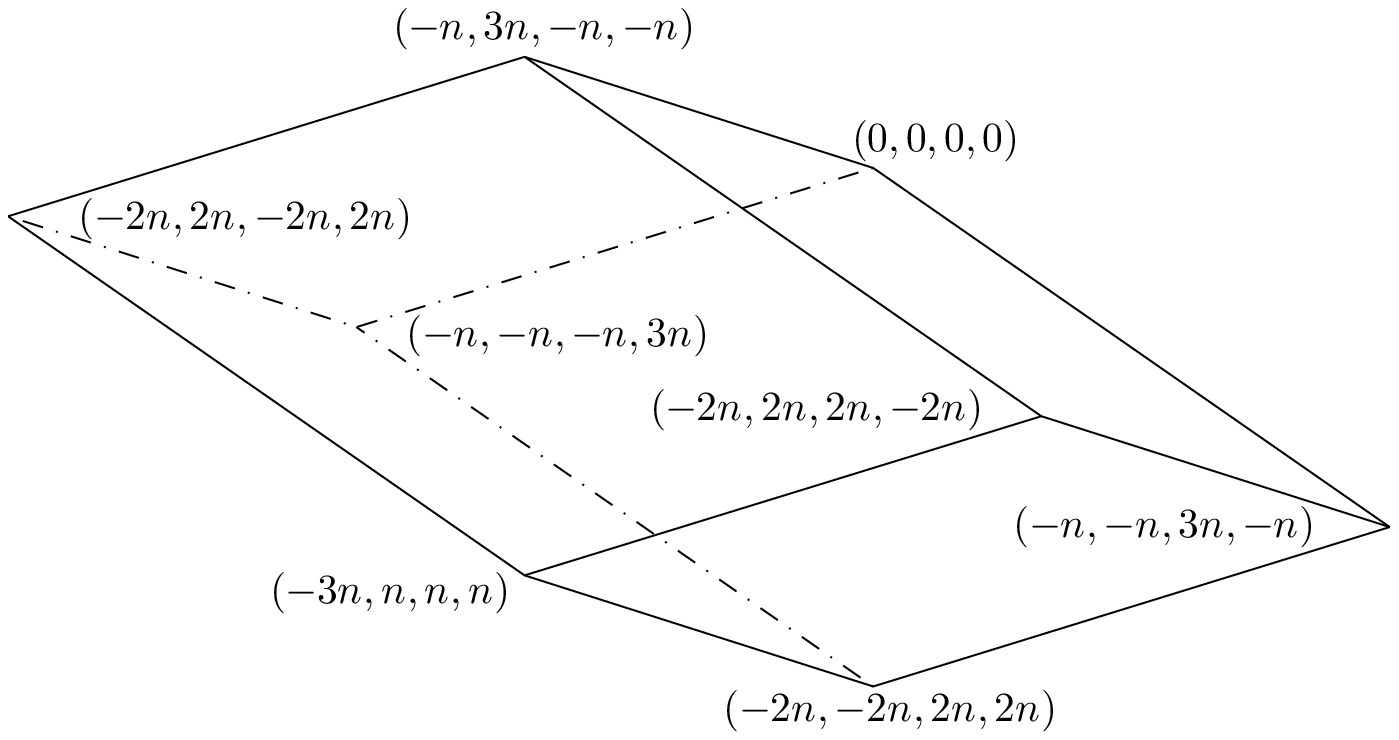}
\caption{$\HH^{(1)}_n$}
\end{minipage}\hfill\begin{minipage}[b]{0.48\textwidth}
\centering
\includegraphics[width=0.75\textwidth]{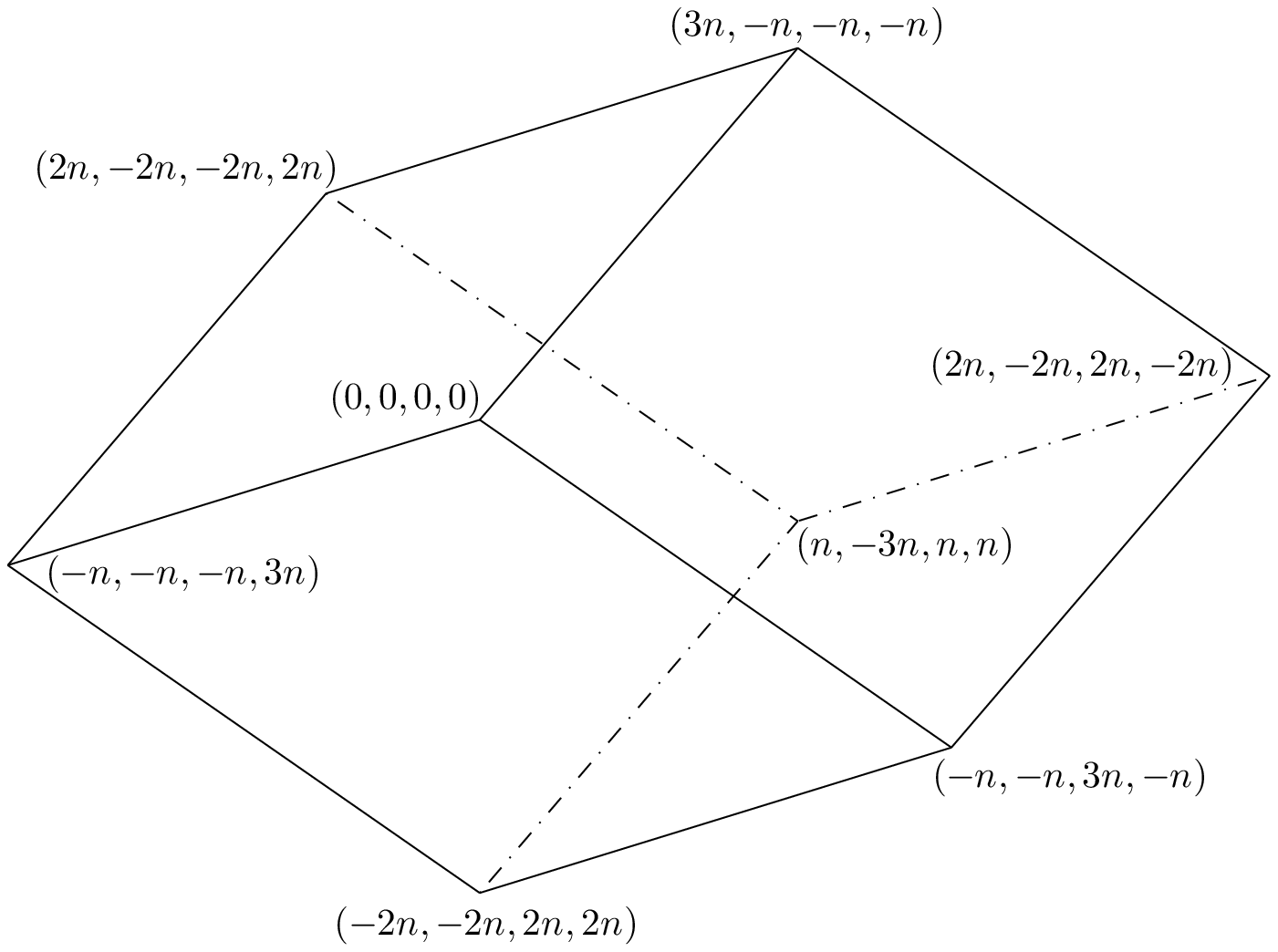}
\caption{$\HH^{(2)}_n$}
\end{minipage}\hspace*{\fill}
\end{figure}

\begin{figure}[htb]
\hfill\begin{minipage}[b]{0.48\textwidth}
\centering
\includegraphics[width=0.9\textwidth]{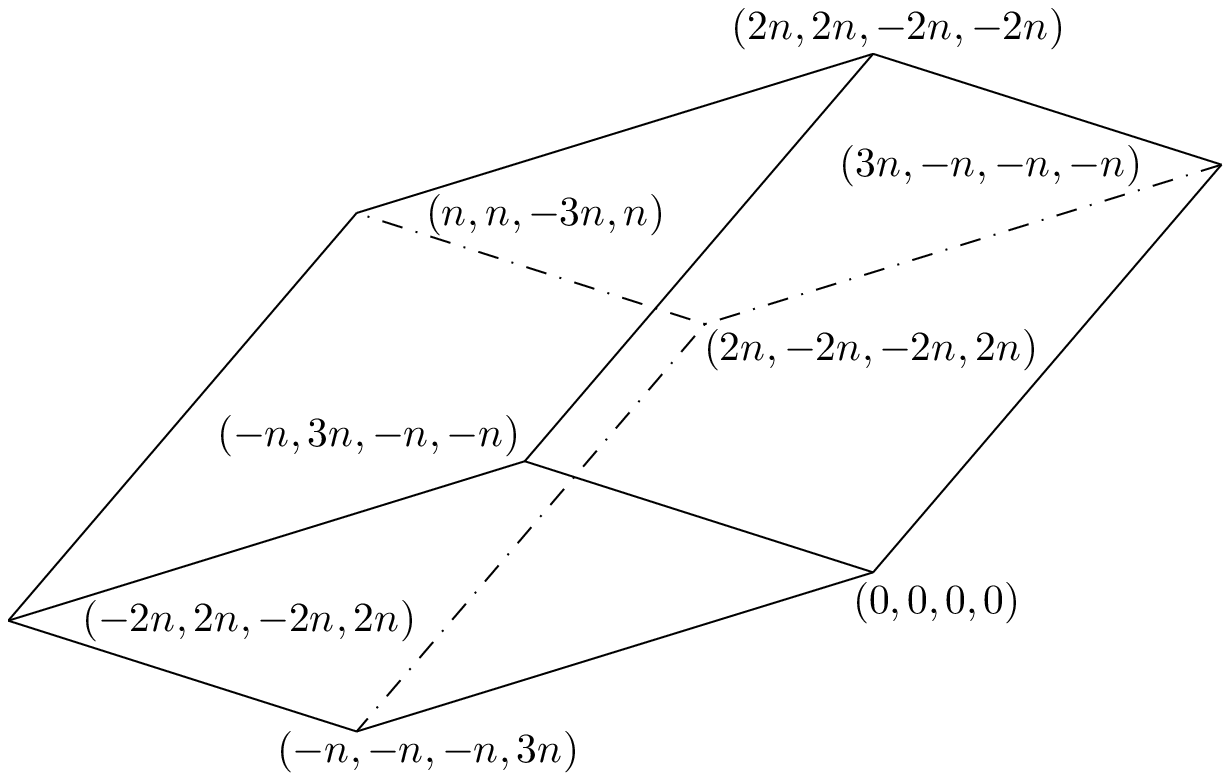}
\caption{$\HH^{(3)}_n$}
\end{minipage}\hfill\begin{minipage}[b]{0.48\textwidth}
\centering
\includegraphics[width=0.75\textwidth]{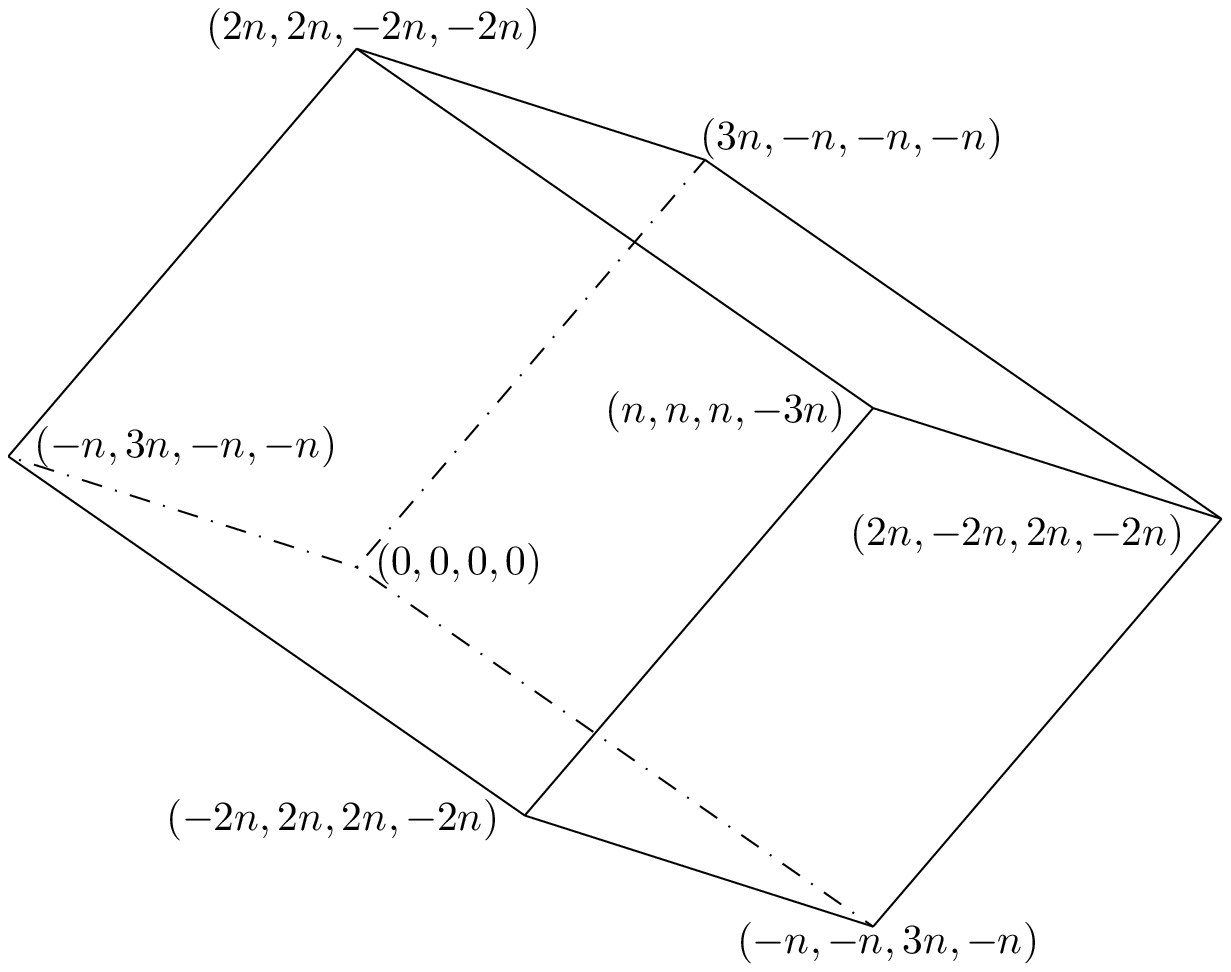}
\caption{$\HH^{(4)}_n$}
\end{minipage}\hspace*{\fill}
\end{figure}

\begin{lem} 
Define $\HH_n^{(k)}: = \left\{ \jb \in \HH:   0\leq j_l -j_k \leq 4 n, \,  l \in \NN_4\right\}$
for $k \in \NN_4$ and
\begin{align*}
\HH_n^{J}:= \left\{\kb \in \HH: k_i = k_j, \, \forall i,j \in J;
\text{ and } \, 0\leq  k_i - k_j \leq 4n,\, \forall j\in J, \ \forall i\in \NN_4\setminus J\right\}
\end{align*}
for $\emptyset\subset J\subseteq  \NN_4$. Then
\begin{align*}
\HH_n^* =  \bigcup_{j\in \NN_4} \HH_n^{(j)} \qquad \hbox{and}\qquad
\HH_n^{J} = \bigcap_{j \in J} \HH_n^{(j)}.
\end{align*}
\end{lem}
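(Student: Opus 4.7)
The plan is to verify both set equalities directly from the definitions, unpacking the inequality constraints.

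For the first equality $\HH_n^* = \bigcup_{j \in \NN_4} \HH_n^{(j)}$, I would handle the two inclusions separately. For $\bigcup_{j} \HH_n^{(j)} \subseteq \HH_n^*$, suppose $\jb \in \HH_n^{(k)}$ for some $k \in \NN_4$. Then for any $i, l \in \NN_4$, write
\[
j_i - j_l = (j_i - j_k) - (j_l - j_k),
\]
and since both $j_i - j_k$ and $j_l - j_k$ lie in $[0, 4n]$, their difference lies in $[-4n, 4n]$, so $\jb \in \HH_n^*$. For the reverse inclusion $\HH_n^* \subseteq \bigcup_j \HH_n^{(j)}$, given $\kb \in \HH_n^*$ I would choose $k \in \NN_4$ such that $k_k = \min_{l \in \NN_4} k_l$. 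Then $k_l - k_k \geq 0$ automatically, while the upper bound $k_l - k_k \leq 4n$ follows from the definition of $\HH_n^*$. Hence $\kb \in \HH_n^{(k)}$.

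For the second equality $\HH_n^J = \bigcap_{j \in J} \HH_n^{(j)}$, the $\subseteq$ direction is immediate: if $\kb \in \HH_n^J$ and $j \in J$, then for any $l \in \NN_4$ either $l \in J$, in which case $k_l - k_j = 0 \in [0, 4n]$, or $l \notin J$, in which case the bound $0 \leq k_l - k_j \leq 4n$ is built into the definition of $\HH_n^J$. Thus $\kb \in \HH_n^{(j)}$ for every $j \in J$. For the reverse inclusion, suppose $\kb \in \bigcap_{j \in J} \HH_n^{(j)}$. For any two indices $i, j \in J$, membership in $\HH_n^{(j)}$ gives $k_i - k_j \geq 0$ while membership in $\HH_n^{(i)}$ gives $k_j - k_i \geq 0$, forcing $k_i = k_j$. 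For $j \in J$ and $i \in \NN_4 \setminus J$, the bound $0 \leq k_i - k_j \leq 4n$ is precisely the condition from $\HH_n^{(j)}$. Therefore $\kb \in \HH_n^J$.

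There is no genuine obstacle here: the argument is purely bookkeeping with the defining inequalities, and the only mildly clever move is the minimum-coordinate choice used to produce the index $k$ in the first inclusion. The homogeneity condition $k_1 + k_2 + k_3 + k_4 = 0$ plays no role in this lemma beyond keeping all sets inside $\ZZ_H^4$, so it need not be invoked explicitly.
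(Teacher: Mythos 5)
Your proof is correct and follows essentially the same route as the paper's: the reverse inclusion in the first identity is obtained by selecting the index of a minimal coordinate, and the second identity is reduced to the observation that membership in both $\HH_n^{(i)}$ and $\HH_n^{(j)}$ forces $k_i=k_j$. The only difference is that you spell out the easy inclusions ($\HH_n^{(k)}\subseteq\HH_n^*$ and $\HH_n^J\subseteq\HH_n^{(j)}$) which the paper states as immediate from the definitions.
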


\begin{proof}
For $\kb \in \HH_n^*$, let $ k_i= \min \{ k_1,k_2,k_3,k_4 \} $. Then
$\kb \in \HH^{(i)}_n$, which implies that $\HH_n^* \subseteq
\bigcup_{i\in \NN_4} \HH_n^{(i)}$. Since $\HH_n^{(i)} \subset \HH_n^*$
for each $i\in \NN_4$, it follows that $\HH_n^* =\bigcup_{i\in \NN_4} \HH_n^{(i)}$.

If $\kb \in \HH^{(i)}_n\cap\HH^{(j)}_n$, then $0\leq  k_i-k_j\leq 0$;
that is, $k_i=k_j$.  It follows that if $\kb \in \bigcap_{j\in J} \HH_n^{(j)}$, then
$k_i = k_j, \, \forall i,j\in J$, which implies $\bigcap_{j\in J} \HH_n^{(j)} \subseteq
\HH_n^{J}$. Since $\HH_n^J \subset \HH_n^{(j)}$ by definition, we conclude
that $\HH_n^{J} = \bigcap_{j\in J} \HH_n^{(j)}$.
\end{proof}

\begin{thm} \label{H-partial} 
For $ n \ge 0$,
\begin{align} \label{DnTheta-n}
    D_n^H(\tb) =\Theta_{n+1}(\tb) - \Theta_{n}(\tb), \qquad \hbox{where}\quad
      \Theta_{n}(\tb): = \prod_{j=1}^4 \frac{\sin \pi n t_j}{\sin \pi t_j}.
\end{align}
\end{thm}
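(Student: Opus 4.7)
The plan is to expand the sum defining $D_n^H$ using the decomposition of $\HH_n^*$ from the preceding lemma, then apply inclusion–exclusion and recognize the resulting sum as the expansion of a product of differences.

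First I would write $\HH_n^* = \bigcup_{k\in \NN_4} \HH_n^{(k)}$ and $\HH_n^{J} = \bigcap_{j\in J} \HH_n^{(j)}$, so inclusion–exclusion gives
\[
 D_n^H(\tb) = \sum_{\emptyset \ne J \subseteq \NN_4} (-1)^{|J|+1} S_J(\tb),
 \qquad S_J(\tb) := \sum_{\kb \in \HH_n^J} e^{\frac{\pi i}{2}\, \kb \cdot \tb}.
\]
Next I would parameterize $\HH_n^J$. If $J \subseteq \NN_4$ and $\kb \in \HH_n^J$, then there is a common value $k_0$ with $k_i = k_0$ for $i \in J$ and $k_i = k_0 + 4m_i$ for $i \notin J$, with $0 \le m_i \le n$. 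The homogeneity $\sum_i k_i = 0$ forces $k_0 = -\frac{4}{|J|+\cdots}\sum m_i$ — but more importantly, when we compute $\kb \cdot \tb$, the common $k_0$ couples to $\sum_{i} t_i = 0$ and drops out, leaving $\kb \cdot \tb = 4 \sum_{i \notin J} m_i t_i$. Therefore the sum factors:
\[
 S_J(\tb) = \prod_{i \notin J} \sum_{m=0}^{n} e^{2\pi i m t_i} = \prod_{i\notin J} \frac{\sin \pi(n+1)t_i}{\sin \pi t_i}\, e^{\pi i n t_i}.
\]

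Setting $f_i := \sin\pi(n+1)t_i/\sin \pi t_i$ and $u_i := e^{-\pi i n t_i}$, and using $\sum_i t_i = 0$ to rewrite $\prod_{i\notin J} e^{\pi i n t_i} = \prod_{i\in J} u_i$, we obtain $S_J = \prod_{i\in J} u_i \prod_{i\notin J} f_i$. Then the binomial-style identity
\[
 \prod_{i=1}^4 (f_i - u_i) = \sum_{J \subseteq \NN_4} \prod_{i\in J}(-u_i) \prod_{i\notin J} f_i
\]
gives, after isolating the $J=\emptyset$ term,
\[
 D_n^H(\tb) = \sum_{\emptyset \ne J}(-1)^{|J|+1} \prod_{i\in J} u_i \prod_{i\notin J} f_i = \prod_{i=1}^4 f_i - \prod_{i=1}^4 (f_i - u_i).
\]

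The last step, which is the only non‑bookkeeping part of the argument, is the trigonometric identity
\[
 f_i - u_i = \frac{\sin\pi(n+1)t_i}{\sin\pi t_i} - e^{-\pi i n t_i} = \frac{\sin\pi n t_i}{\sin\pi t_i}\, e^{i\pi t_i},
\]
verified by expanding $\sin\pi(n+1)t_i = \sin\pi n t_i \cos\pi t_i + \cos\pi n t_i \sin\pi t_i$. Taking the product and again using $\sum_i t_i = 0$ to kill $\prod_i e^{i\pi t_i} = 1$, I get $\prod_i(f_i - u_i) = \Theta_n(\tb)$, while trivially $\prod_i f_i = \Theta_{n+1}(\tb)$. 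Combining yields $D_n^H = \Theta_{n+1} - \Theta_n$.

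The main obstacle is really organizational rather than technical: one has to notice that the homogeneity condition $\sum t_i = 0$ enters twice — once to factor the inner sums over $\HH_n^J$, and again to eliminate the exponential prefactor in the final product — so that the messy phases cancel and the answer comes out as a clean difference of $\Theta$'s. The trigonometric identity for $f_i - u_i$ is the precise statement that makes the inclusion–exclusion collapse.
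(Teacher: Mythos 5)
Your argument is correct and follows essentially the same route as the paper: inclusion--exclusion over the four parallelepipeds $\HH_n^{(k)}$, factorization of each $S_J$ into geometric sums $K_n(t_l)$ via homogeneity, and the collapse to $\prod_j K_n(t_j)-\prod_j\bigl(K_n(t_j)-1\bigr)$ with the residual phases killed by $t_1+t_2+t_3+t_4=0$. Your $f_i-u_i$ identity is just the paper's computation of $K_n(t)-1=e^{\pi i(n+1)t}\sin\pi nt/\sin\pi t$ with the phase factored out a step earlier, so the two proofs are the same up to bookkeeping.
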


\begin{proof}
Using the inclusion-exclusion relation of subsets, we have
$$
   D_n^H(\tb) =   \sum_{\emptyset\subset J\subseteq \NN_4} (-1)^{|J|+1}
    \sum_{\kb \in \HH_n^J}  e^{\frac{\pi i}{2}\, \kb \cdot \tb}.
$$
Fix $j\in J$, using the fact that $t_j = - \sum_{i\ne j} t_i$,  we have
\begin{align*}
\sum_{\kb \in \HH_n^J} e^{\frac{\pi i}{2}\, \kb \cdot \tb}
=&  \sum_{\kb\in \HH_n^J}   e^{\frac{\pi i}{2}\, \sum_{l\in \NN_4\setminus J}
   (k_l-k_j) t_l }
= \sum_{\kb\in \HH_n^J}   \prod_{l\in \NN_4\setminus J} e^{\frac{\pi i}{2} (k_l-k_j) t_l }.
\end{align*}
By the definition of $\HH_n^J$ and the fact that $\kb \in \HH$ implies
$k_i \equiv k_j \pmod{4}$, we obtain
\begin{align*}
\sum_{\kb \in \HH_n^J} e^{\frac{\pi i}{2}\, \kb \cdot \tb}
=&   \prod_{l\in \NN_4\setminus J} \sum_{\substack{  0\leq k_l-k_j \leq 4n \\
  k_l\equiv k_j\!\!\!\!\!\pmod{4} } }  e^{\frac{\pi i}{2} \, (k_l-k_j) t_l}\\
=&  \prod_{l\in \NN_4\setminus J} \sum_{0\leq k_l\leq n}  e^{2 \pi i\, k_l t_l}
: = \prod_{l \in \NN_4 \setminus J} K_n(t_l).
\end{align*}
Consequently, we obtain
\begin{equation}\label{eq:DnH}
D_n^H(\tb) =   \sum_{\emptyset\subset J\subseteq \NN_4} (-1)^{|J|+1}
  \prod_{l \in \NN_4 \setminus J} K_n(t_l)
   = \prod_{j\in \NN_4} K_n(t_j) - \prod_{j\in \NN_4} (K_n(t_j)-1),
\end{equation}
where the second equality is easily verified upon expanding the right hand
side explicitly. Thus, we conclude that
\begin{align*}
  D_n^H(\tb) = & \prod_{j=1}^4 \frac{\e^{2\pi i\, (n+1) t_j}-1}{e^{2\pi i\, t_j}-1 } -
   \prod_{j=1}^4 \frac{\e^{2\pi i\, (n+1) t_j}  -\e^{2\pi i t_j} }{e^{2\pi i\,\ t_j}-1 }\\
  =&\prod_{j=1}^4\frac{ e^{\pi i\, (n+1) t_j}-e^{-\pi i\, (n+1) t_j}}{e^{\pi  i\, t_j}
    -e^{-\pi i\, t_j} }  e^{\pi i\, n t_j}
   -\prod_{j=1}^4\frac{e^{\pi i\, n t_j}  - e^{-\pi i\, n t_j} }{e^{\pi i\,  t_j}-
     e^{-\pi i\, t_j} } e^{\pi i\, (n+1) t_j}\\
 = &\prod_{j=1}^4 \frac{\sin \pi (n+1) t_j}{\sin \pi t_j}
  - \prod_{j=1}^4 \frac{\sin \pi n t_j}{\sin \pi t_j},
\end{align*}
where in the last step we have used the fact that $\prod_{j=1}^4 e^{ i\alpha \, t_j}=1$,
which follows from the fact that $t_1+t_2+t_3+t_4 =0$. This completes the proof.
\end{proof}

As an immediate consequence of Theorem \ref{H-partial}, we conclude that
\begin{equation*}
           |\HH^*_n| = D_n^H(0) = (n+1)^4-n^4.
\end{equation*}

The explicit formula of the Dirichlet kernel also allows us to derive an estimate
for the norm of the partial sum  $S_n f$ in \eqref{PartialSum}.  Let $\|f\|_\infty$
denote the uniform of $f \in C(\overline{\Omega}_H)$ and let $\|S_n\|_\infty$
denote the operator norm of $S_n: C(\overline{\Omega}_H) \mapsto
C(\overline{\Omega}_H)$.

\begin{thm} \label{SnNorm} 
There is a constant $c$ independent of $f$ and $n$ such that
$$
   \|S_n \|_{\infty} \le c (\log n)^3.
$$
\end{thm}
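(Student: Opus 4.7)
The plan is to reduce the operator-norm estimate to an $L^1$ bound on the Dirichlet kernel $D_n^H$ and then exploit the expansion \eqref{eq:DnH}, which writes $D_n^H$ as a sum of products of at most three univariate kernels $K_n$, so that the remaining factors match the three effective dimensions of $\Omega_H$. Concretely, from the convolution representation \eqref{PartialSum} and the $H$-periodicity of $D_n^H$, for every $f\in C(\overline\Omega_H)$ and every $\tb$,
\[
|S_n f(\tb)|\le \frac{\|f\|_\infty}{2}\int_{\Omega_H}|D_n^H(\tb-\sb)|\,d\sb = \frac{\|f\|_\infty}{2}\int_{\Omega_H}|D_n^H(\sb)|\,d\sb,
\]
where the last equality uses the substitution $\sb\mapsto \tb-\sb$ together with the fact that $\Omega_H$ is a fundamental domain for $H\ZZ^3$ acting on $\RR_H^4$. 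Hence it suffices to prove that $\int_{\Omega_H}|D_n^H(\tb)|\,d\tb\le c(\log n)^3$.

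Next, by \eqref{eq:DnH} and the triangle inequality,
\[
\int_{\Omega_H}|D_n^H(\tb)|\,d\tb \;\le\; \sum_{\emptyset\neq J\subseteq \NN_4}\int_{\Omega_H}\prod_{l\in\NN_4\setminus J}|K_n(t_l)|\,d\tb,
\]
and each product contains $m:=4-|J|\in\{0,1,2,3\}$ factors, since $J$ is nonempty. For each such $J$ pick three indices $j_1,j_2,j_3\in\NN_4$ containing $\NN_4\setminus J$ and parametrize the hyperplane $\RR_H^4$ by $(t_{j_1},t_{j_2},t_{j_3})$; the induced $3$-dimensional Hausdorff measure is $d\tb = 2\,dt_{j_1}dt_{j_2}dt_{j_3}$, and the conditions $\sum_i t_i=0$ and $|t_i-t_j|\le 1$ together imply $|t_j|\le 3/4$ for every $j$, so $\Omega_H$ is contained in $[-3/4,3/4]^3$ in these coordinates. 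Since the integrand depends on at most $m\le 3$ of the three parameters, Fubini reduces the estimate to a product of at most three one-dimensional integrals of the form $\int_{-3/4}^{3/4}|K_n(t)|\,dt=O(\log n)$, which is the classical univariate Lebesgue-constant bound. This gives $O((\log n)^m)=O((\log n)^3)$ for each term, and summing over the $2^4-1=15$ choices of $J$ yields the desired bound.

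The main obstacle is the mismatch between the four factors in the natural formula $\Theta_n(\tb)=\prod_{j=1}^4 \sin\pi n t_j/\sin\pi t_j$ and the three effective dimensions of $\Omega_H$: a direct Cauchy--Schwarz or H\"older bound on $\int_{\Omega_H}|\Theta_n|\,d\tb$ only yields polynomial (rather than polylogarithmic) growth in $n$, which would be far too weak. The identity \eqref{eq:DnH} circumvents this by algebraically cancelling the full four-factor product and leaving only sub-products of at most three univariate kernels, whose number matches the dimension of $\Omega_H$, so that each can then be estimated by iterated one-dimensional Dirichlet bounds.
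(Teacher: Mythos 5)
Your proposal is correct and follows essentially the same route as the paper: both reduce the norm to an $L^1$ bound on $D_n^H$, invoke the identity \eqref{eq:DnH} to replace the four-factor product by sub-products of at most three univariate kernels $K_n$, and then bound each term by a product of at most three one-dimensional Dirichlet integrals of size $O(\log n)$. Your use of periodicity to drop the maximum over $\tb$ at the outset, and your explicit parametrization of $\RR_H^4$ with the containment $\Omega_H\subset[-3/4,3/4]^3$, are just slightly more careful versions of the paper's ``enlarging the domains of integration'' step.
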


\begin{proof}
From \eqref{PartialSum}, a standard argument shows that the norm is given by
$$
  \|S_n \|_{\infty} = \frac{1}{2}\max_{\tb\in \overline{\Omega}_H}
        \int_{\Omega_H} \left| D_n^H(\tb - \sb) \right| d\sb.
$$
To estimate the integral, we use the first equation of \eqref{eq:DnH} and the
fact that
\begin{equation} \label{eq:KnCpt}
   K_n(t) = \sum_{j=0}^n e^{2\pi i j t} = e^{\pi i n t} \frac{\sin \pi (n+1) t}{\sin \pi t},
\end{equation}
which leads to
\begin{align*}
   \|S_n\|_\infty &\,\le \frac{1}{2} \max_{\tb\in \overline{\Omega}_H}
      \sum_{\emptyset \subset J \subseteq \NN_4}
        \int_{\Omega_H} \prod_{l \in \NN_4 \setminus J} \left|K_n(t_l -s_l) \right| d\sb \\
     &\,  \le   \frac{1}{2} \max_{\tb\in \overline{\Omega}_H}
         \sum_{\emptyset \subset J \subseteq \NN_4}
        \int_{\Omega_H} \prod_{l \in \NN_4 \setminus J}
               \left|\frac{\sin \pi (n+1)(t_l-s_l)}{\sin \pi (t_l-s_l)}\right| d\sb .
\end{align*}
Since $J \ne \emptyset$, the above product contains at most three terms, and
those that contain product of three terms dominate other integrals. Consequently,
enlarging the domains of the integration and then using the periodicity
of the trigonometric function, we conclude that
\begin{align*}
  \|S_n\|_\infty \le &\ c  \int_{[-1,1]^3} \prod_{j=1}^3
                \left|\frac{\sin \pi (n+1) u_j}{\sin \pi u_j}\right| du  \\
             = &\ c  \prod_{j=1}^3 \int_{-1}^1
                    \left|\frac{\sin \pi (n+1) u_j}{\sin \pi u_j}\right| du_j
               \le c (\log n)^3,
\end{align*}
where the last step follows from the usual estimate of the integral involved.
\end{proof}

We expect that the estimate is sharp,  that is, $\|S_n\|_\infty \ge c (\log n )^3$.
To prove such a result would require a lower bound estimate of the integral of
$|D_n^H(\tb -\sb)|$ at one point in $\Omega$, likely at $\sb =0$. However, this
does not look to be an easy task as there is a sum of four terms of the same
type.


\subsection{Discrete Fourier analysis on the rhombic dodecahedron}
Using the set-up in the previous subsection, Theorem \ref{thm:2.4} in the
homogeneous coordinates becomes the following proposition

\begin{prop}\label{pro:inner}
For $n\geq 0$, define
\begin{align*}
     \langle f,\, g \rangle_n := \frac{1}{4n^3} \sum_{j\in \HH_n} f(\tfrac{\jb}{4n})
        \, \overline{g(\tfrac{\jb}{4n})},      \quad f,\, g\in C(\overline{\Omega}_H).
\end{align*}
Then
$$
\langle f,\, g \rangle = \langle f,\, g \rangle_n, \qquad f,\, g \in \CH_n.
$$
\end{prop}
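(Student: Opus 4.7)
The proposition is Theorem~\ref{thm:2.4} rewritten in homogeneous coordinates with the specific choice $A=A$ (the fcc generator) and $B=nA$. With this choice, $N=B^{\tr}A=nA^{\tr}A$ is symmetric, and $\Omega_B=n\Omega$, so a direct check shows $\Lambda_N^{\dag}=\Lambda_N$. Since $\det A=2$ and hence $\det(A^{\tr}A)=4$, we have $|\det N|=4n^{3}$, which matches the factor $1/(4n^{3})$ in the definition of $\langle\cdot,\cdot\rangle_{n}$.

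The key step is to translate the summation index. Using the bijection $k\leftrightarrow\jb=4H(A^{\tr}A)^{-1}k$ between $\ZZ^{3}$ and $\HH$ established in Subsection~3.1, together with the matrix identity $UA^{-1}=H(A^{\tr}A)^{-1}$ (a direct $4\times 3$ verification using $A=A^{\tr}$), one computes, for $k\in\Lambda_N$,
\[
\tb \;=\; UB^{-\tr}k \;=\; \tfrac{1}{n}\,UA^{-1}k \;=\; \tfrac{1}{n}\,H(A^{\tr}A)^{-1}k \;=\; \frac{\jb}{4n}.
\]
Hence the condition $B^{-\tr}k\in\Omega_A$ is equivalent to $\jb/(4n)\in\Omega_H$, i.e., $\jb\in\HH_n$, and the sample values $f(B^{-\tr}k)$ coincide with $f(\jb/(4n))$ once $f$ is viewed as a function on $\Omega_H$. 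Consequently, $\langle f,g\rangle_N$ from Theorem~\ref{thm:2.4} is precisely $\langle f,g\rangle_n$.

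Finally, under the same change of variables the continuous inner product transforms according to \eqref{integral} into the normalized integral over $\Omega_H$ appearing in Proposition~\ref{prop:ortho-H}, and the basis $\phi_{k}(x)=e^{2\pi i k^{\tr}A^{-1}x}$ of $\mathcal{H}_N$ becomes $\phi_{\jb}(\tb)=e^{(\pi i/2)\jb\cdot\tb}$ for $\jb\in\HH_n$, which spans $\CH_n$. Thus $\langle f,g\rangle=\langle f,g\rangle_n$ on $\CH_n$ follows directly from Theorem~\ref{thm:2.4}. The proof is essentially bookkeeping; the only genuine obstacle is verifying the identity $UA^{-1}=H(A^{\tr}A)^{-1}$, which is routine but underpins the entire identification between the Cartesian and the homogeneous indexings.
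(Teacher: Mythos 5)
Your proposal is correct and follows exactly the route the paper intends: Proposition~\ref{pro:inner} is stated there without proof as the specialization of Theorem~\ref{thm:2.4} to $A=A$, $B=nA$ rewritten in homogeneous coordinates, and you have simply supplied the bookkeeping (the identity $UA^{-1}=H(A^{\tr}A)^{-1}$ does hold, since $UA=H$ and $A=A^{\tr}$, so $UB^{-\tr}k=\jb/(4n)$ as you claim, identifying $\Lambda_N$ with $\HH_n$ and $\mathcal{H}_N$ with $\CH_n$).
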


The point set $\HH_n$, hence the inner product $\langle\cdot, \cdot\rangle_n$,
is not symmetric on $\Omega_H$ in the sense that it contains only part of the
points on the boundary.  Using the periodicity, however, we can show that the
inner product $\langle \cdot, \cdot \rangle_n$ is equivalent to a symmetric
discrete inner product based on $\HH^*_n$. To proceed, define
\begin{align*}
  \HH_n^{\circ} := \left\{ \jb \in \HH:  \tfrac{\jb}{4n} \in  \Omega_H^{\circ} \right\}
\end{align*}
and, recall \eqref{CK}, for $0 < i, j < i+j \leq 4$ define
\begin{align} \label{eq:Hn^ij}
  \HH_n^{i,j} := \left\{ \kb \in \HH:\  \tfrac{\kb}{4n} \in B^{i,j}  \right\},\qquad
  \HH_{n,0}^{i,j} := \left\{ \kb\in \HH:  \tfrac{\kb}{4n} \in B^{i,j}_0 \right\}.
\end{align}
Recall that $B^{i,j}$ is a boundary element of $\Omega_H$, so that $\HH_n^{i,j}$
describes those points $\jb$ in $\HH_n$ such that $\frac{\jb}{4n}$ are in
$B^{i,j}$ of $\partial \Omega_H$. Furthermore, $\HH_n^{i,j} = \{\HH_{n,0}^{i,j} \sigma:
\sigma \in \CG\}$. Using Proposition \ref{prop:BIJ}, it is easy to see that
$\HH_n^{i,j} \cap \HH_n^{k,l} = \emptyset$ if $i\ne k, j \ne l$,
\begin{align*}
\bigcup_{0<i,j<i+j\leq 4} \HH_n^{i,j}   = \HH_n^*\setminus \HH_n^{\circ}
   \qquad \hbox{and} \qquad
\bigcup_{0<i,j<i+j\leq 4} \HH_{n,0}^{i,j} = \HH_n \setminus \HH_n^{\circ}.
\end{align*}

\begin{lem} 
For $n \ge 1$, $0<i,j < i+j \le 4$,
$$
  |\HH_n^\circ| = n^4 - (n-1)^4, \qquad |\HH_n^{i,j}|= \frac{4!}{i! j! (4-i-j)!}(n-1)^{4-i-j}.
$$
\end{lem}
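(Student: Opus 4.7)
The plan is to handle the two formulas separately.

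For $|\HH_n^\circ| = n^4 - (n-1)^4$, I would use the slick observation that $\HH_n^\circ = \HH_{n-1}^*$. Indeed, $\HH_n^\circ$ is defined by $\tfrac{\kb}{4n} \in \Omega_H^\circ$, i.e.\ by the strict inequalities $-4n < k_i - k_j < 4n$; but membership in $\HH$ forces $k_i - k_j \in 4\ZZ$, so these strict inequalities are equivalent to $-4(n-1) \le k_i - k_j \le 4(n-1)$, which is precisely the defining condition of $\HH_{n-1}^*$. Invoking the identity $|\HH_m^*| = (m+1)^4 - m^4$ already derived right after Theorem \ref{H-partial}, with $m = n-1$, gives the claim.

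For $|\HH_n^{i,j}| = \tfrac{4!}{i!\,j!\,(4-i-j)!}(n-1)^{4-i-j}$, I would combine the disjoint decomposition $B^{i,j} = \bigcup_{(I,J) \in \CK^{i,j}} B_{I,J}$ (disjoint by Proposition \ref{prop:BIJ}(i)) with the multinomial count $|\CK^{i,j}| = \binom{4}{i}\binom{4-i}{j} = \tfrac{4!}{i!\,j!\,(4-i-j)!}$. This reduces matters to showing that for each fixed $(I,J) \in \CK^{i,j}$, the number of $\kb \in \HH$ with $\tfrac{\kb}{4n} \in B_{I,J}$ equals $(n-1)^{4-i-j}$, with the empty-product convention $(n-1)^0 = 1$ when $i+j = 4$.

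To produce that count I would parameterize $\tb \in B_{I,J}$ as $t_i = t$ for $i \in I$, $t_j = t-1$ for $j \in J$, and $t_l \in (t-1, t)$ \emph{strictly} for $l \notin I \cup J$, subject to $\sum_a t_a = 0$. Setting $a := 4nt$ and $\kb = 4n\tb$, the condition $\kb \in \HH$ (all coordinates congruent mod $4$) together with $k_l \in (a-4n, a)$ forces $k_l = a - 4 s_l$ with $s_l \in \{1,\ldots,n-1\}$, while the homogeneity $\sum_a k_a = 0$ then uniquely pins down $a = nj + \sum_{l \notin I \cup J} s_l$. Thus each tuple $(s_l)_{l \notin I \cup J} \in \{1,\ldots,n-1\}^{4-i-j}$ yields exactly one admissible $\kb$, giving the asserted count; summing over $\CK^{i,j}$ then produces the formula.

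The only subtlety, essentially bookkeeping, is verifying that the derived $t = a/(4n)$ automatically falls in the range $(j/4,\,(4-i)/4)$ forced by $\tb \in B_{I,J} \subset \overline{\Omega}_H$ (so that the parameterization is truly bijective with tuples in $\{1,\ldots,n-1\}^{4-i-j}$). This follows from the bounds $1 \le s_l \le n-1$ by a one-line interval estimate, so no genuine obstacle arises.
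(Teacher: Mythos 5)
Your proof is correct and follows essentially the same route as the paper: the first count via the identification $\HH_n^\circ=\HH_{n-1}^*$ together with $|\HH_m^*|=(m+1)^4-m^4$, and the second via the disjoint decomposition of $B^{i,j}$ into its $\frac{4!}{i!\,j!\,(4-i-j)!}$ segments $B_{I,J}$, each carrying $(n-1)^{4-i-j}$ lattice points. The paper merely asserts these two counts in one sentence each, so your explicit parameterization of the points on $B_{I,J}$ is a welcome filling-in of detail rather than a departure.
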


\begin{proof}
The first equation follows from $|\HH_n^\circ| = |\HH_{n-1}^*|=n^4 -(n-1)^4$.
The description of $B^{i,j}$ in Subsection 3.2 shows that $B^{i,j}$ has
$\frac{4!}{i! j! (4-i-j)!}$ segments and each has $(n-1)^{4-i-j}$ points, which
proves the second equation.
\end{proof}

\begin{defn} \label{defn:Sym-ipd} 
For $n \ge 0$ define the symmetric discrete inner product
\begin{align*}
 \langle f,\, g \rangle_n^* : =  \frac{1}{4n^3} \sum_{\jb\in \HH^*_n} c_{\jb}^{(n)}
 f(\tfrac{\jb}{4n})\, \overline{g(\tfrac{\jb}{4n})}, \qquad
    f,\, g \in C(\overline{\Omega}_H),
\end{align*}
where $c_{\jb}^{(n)}=1$ if $ \jb \in \HH_n^{\circ}$, and
$c_{\jb}^{(n)}=\frac{1}{\binom{i+j}{i}}$ if $ \jb \in \HH_n^{i,j}$;  more explicitly
\begin{align*}
    c_{\jb}^{(n)} =\begin{cases} 1, & \jb \in \HH_n^{\circ}, \quad
               \qquad \qquad \!\!\! (\hbox{$n^4-(n-1)^4$ points in the interior}),\\
       \frac12, & \jb \in \HH_n^{1,1}, \quad
             \quad\qquad (\hbox{$12 (n-1)^2$ points on the faces}),\\
       \frac13, & \jb \in \HH_n^{1,2}\cup \HH_n^{2,1}, \quad
             (\hbox{$2 \times 12 (n-1) $ points on the edges}),\\
       \frac14, & \jb \in \HH_n^{1,3}\cup \HH_n^{3,1}, \quad
             (\hbox{$2\times 4$  points on the vertices}),\\
       \frac16, & \jb \in \HH_n^{2,2}, \qquad  \!  \qquad (\hbox{$6$ points on the vertices}).
\end{cases}
\end{align*}
\end{defn}

It is easy to verify that $\sum_{\jb \in \HH_n^*} c_{\jb}^{(n)} = 4 n^3$, so that
$\la 1, 1 \ra_n^* =1$. We prove the following result.

\begin{thm} \label{ipdH}
 For $n\geq 0$,
\begin{align*}
   \langle f,\, g\rangle = \langle f,\,g \rangle_n = \langle f,\,g \rangle_n^*,
     \quad f,\,g\in \mathcal{H}_n.
 \end{align*}
\end{thm}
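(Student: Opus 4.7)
The first equality $\langle f,g\rangle = \langle f,g\rangle_n$ for $f,g \in \CH_n$ is exactly Proposition~\ref{pro:inner}, so the substance of the theorem is the second equality $\langle f,g\rangle_n = \langle f,g\rangle_n^*$. The strategy is to exploit $H$-periodicity to show that, although $\HH_n^*$ contains boundary points that are absent from $\HH_n$, the modified weights $c_\jb^{(n)}$ exactly compensate for the overcounting. Every $f\in\CH_n$ is a linear combination of the exponentials $\phi_\kb$, each of which is $H$-periodic by Lemma~\ref{lm:periodicity}; hence $f\overline{g}$ takes the same value at any two points of $\overline{\Omega}_H$ which are congruent modulo $\ZZ_H^4$.

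The plan is to partition both sums according to the decomposition
\[
  \HH_n^* \;=\; \HH_n^\circ \;\cup\; \bigcup_{0<i,j<i+j\le 4}\HH_n^{i,j},
  \qquad
  \HH_n \;=\; \HH_n^\circ \;\cup\; \bigcup_{0<i,j<i+j\le 4}\HH_{n,0}^{i,j},
\]
and compare the two sides piece by piece. On the interior part $\HH_n^\circ$ both inner products carry the weight $1$, so those contributions match trivially. The real work is to show that for each $0<i,j<i+j\le 4$ the boundary contributions agree; using the scaling $\kb\mapsto \kb/(4n)$, this reduces to verifying that the points of $\HH_n^{i,j}$ lying over each orbit of the $H$-congruence action on $B^{i,j}$ are correctly weighted.

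The orbit count is the heart of the matter and the step I expect to require the most care. Fix $(I,J)\in\CK_0^{i,j}$ and consider the orbit $[B_{I,J}]$ from Lemma~\ref{congruence}. The lemma gives $[B_{I,J}]=\bigcup_{\sigma\in\CG_{I\cup J}}B_{I,J}\sigma$. The stabilizer of the ordered pair $(I,J)$ inside $\CG_{I\cup J}=\CS_{I\cup J}$ is the subgroup $\CG_I\times\CG_J$ of order $|I|!\,|J|!$, so there are exactly $\binom{|I|+|J|}{|I|}=\binom{i+j}{i}$ distinct boundary elements in $[B_{I,J}]$. Each $\kb/(4n)$ in one of these boundary pieces is congruent mod $\ZZ_H^4$ to $\binom{i+j}{i}-1$ other points of $\HH_n^*$ (its images under the cosets of $\CG_I\times\CG_J$ in $\CG_{I\cup J}$), while Lemma~\ref{lem:s=equiv=t} guarantees that exactly one representative of each orbit lies in $\Omega_H$, namely the one parameterised by the pair in $\CK_0$. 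Hence each orbit contributes $\binom{i+j}{i}$ equal terms in the symmetric sum (since $f\overline{g}$ is $H$-periodic) each weighted by $c_\jb^{(n)}=1/\binom{i+j}{i}$, and a single term of weight one in $\langle f,g\rangle_n$.

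Summing over orbits in every $B^{i,j}$ via \eqref{eq:Bij=[B]} and adding the matching interior contribution yields
\[
  \sum_{\jb\in\HH_n^*} c_\jb^{(n)} f\!\left(\tfrac{\jb}{4n}\right)\overline{g\!\left(\tfrac{\jb}{4n}\right)}
  \;=\; \sum_{\jb\in\HH_n} f\!\left(\tfrac{\jb}{4n}\right)\overline{g\!\left(\tfrac{\jb}{4n}\right)},
\]
which, after division by $4n^3$, is the desired identity $\langle f,g\rangle_n^* = \langle f,g\rangle_n$. As a sanity check the total weight $\sum_{\jb\in\HH_n^*} c_\jb^{(n)}$ should equal $|\HH_n|=4n^3$, and indeed the counts in Definition~\ref{defn:Sym-ipd} give $(n^4-(n-1)^4) + \tfrac12\cdot 12(n-1)^2 + \tfrac13\cdot 24(n-1) + \tfrac14\cdot 8 + \tfrac16\cdot 6 = 4n^3$, consistent with $\langle 1,1\rangle_n^* = 1$.
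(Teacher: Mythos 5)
Your argument is correct and follows essentially the same route as the paper: the interior contributions match with weight $1$, and each boundary congruence class modulo $\ZZ_H^4$ has exactly $\binom{i+j}{i}$ members in $\HH_n^{i,j}$ with a unique representative in $\HH_n$, so the weights $c_\jb^{(n)}=1/\binom{i+j}{i}$ together with $H$-periodicity of $f\bar g$ collapse the symmetric sum onto the sum over $\HH_n$. The paper packages the same count via the sets $\CS_\jb$ of Theorem~\ref{ipdH}'s proof and Lemma~\ref{congruence}, whereas you justify $|\CS_\jb|=\binom{i+j}{i}$ by a stabilizer computation; this is only a cosmetic difference.
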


\begin{proof}
For each $\jb \in \HH_{n,0}^{i,j}$, we define
\begin{equation}\label{CSk}
  \CS_\jb =\{\kb \in \HH_n^*:  \tfrac{\kb}{4n} \equiv  \tfrac{\jb}{4n} \pmod{H} \}.
\end{equation}
It follows immediately from \eqref{eq:Bij=[B]} that
$\HH_n^{i,j} = \bigcup_{\kb\in \HH^{i,j}_{n,0}}\CS_{\kb}$.
By \eqref{eq:[B]}, $[B^{I,J}]$ is the union of $\binom{|I|+|J|}{|I|}$ components
of $B_{I,J}\sigma$. Consequently, it follows that
$|\CS_\jb| = \binom{k+l}{l}$ for $\jb \in \HH_{n,0}^{k,l}$.
Let $f$ be  an  $H$-periodic function. Then
\begin{align*}
\sum_{\jb\in \HH^*_n\setminus \HH_n^{\circ}} c_{\jb}^{(n)} f(\tfrac{\jb}{4n})
&= \sum_{0<i,k<i+k\leq 4} \frac{1}{\binom{i+k}{i}} \sum_{\jb \in \HH_n^{i,k} }
      f(\tfrac{\jb}{4n}) \\
& =  \sum_{0<i,k<i+k\leq 4} \frac{1}{\binom{i+k}{i}} \sum_{\jb \in \HH_{n,0}^{i,k} }
     \sum_{\kb \in \CS_\jb} f(\tfrac{\kb}{4n}).
\end{align*}
Since $|\CS_\jb| = \binom{i+k}{i}$ for $\jb \in \HH_{n,0}^{i,k}$,  using the
invariance of $f$, we then conclude that
$$
 \sum_{\jb\in \HH^*_n\setminus \HH_n^{\circ}} c_{\jb}^{(n)} f(\tfrac{\jb}{4n})
  =  \sum_{0<i,k<i+k\leq 4} \frac{1}{\binom{i+k}{i}} \sum_{\jb \in \HH_{n,0}^{i,k} }
       \binom{i+k}{i} f(\tfrac{\jb}{4n}) =
        \sum_{\jb\in \HH_n\setminus \HH_n^{\circ}}  f(\tfrac{\jb}{4n}).
$$
Since $c_\jb^{(n)}=1$ if $\jb \in \HH_n^\circ$, the proof is completed.
\end{proof}

The discrete inner product is closely related to cubature formula, since
Theorem \ref{ipdH} shows that the integral of $f \in \CH_n$ agrees with
the discrete sum over $\HH_n^*$.  In fact, more is true.  Let us define by
$\CT_n$ the space of generalized trigonometric polynomials,
$$
\CT_n : = \mathrm{span} \left\{\phi_{\kb}:   \kb\in \HH^*_n\right\}.
$$

\begin{thm}\label{th:cubature-H} 
For $n\ge 0$, the cubature formula
\begin{align*}
   \frac{1}{2} \int_{\Omega} f(\tb) d{\tb} = \frac{1}{4n^3} \sum_{\jb \in
     \HH_n^*}   c_{\jb}^{(n)} f(\tfrac{\jb}{4n})
\end{align*}
is exact for all $f\in \CT_{2n-1}$.
\end{thm}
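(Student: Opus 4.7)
By linearity, and because $\CT_{2n-1} = \operatorname{span}\{\phi_\kb : \kb \in \HH^*_{2n-1}\}$, the claim reduces to showing
\[
    \la \phi_\kb, 1\ra = \la \phi_\kb, 1\ra_n^* \quad\text{for every } \kb \in \HH^*_{2n-1}.
\]
By Proposition~\ref{prop:ortho-H} the left side equals $\delta_{\kb,0}$. For $\kb = 0$ the right side is $\frac{1}{4n^3}\sum_{\jb\in \HH_n^*} c_\jb^{(n)} = 1$, using the identity $\sum_{\jb\in \HH_n^*} c_\jb^{(n)} = 4n^3$ noted just after Definition~\ref{defn:Sym-ipd}. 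So the task is to show that the right side vanishes whenever $\kb \in \HH^*_{2n-1}\setminus\{0\}$.

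The core of the argument is a periodicity reduction to $\HH_n$. First I would establish the phase identity: for $\mb \in \ZZ_H^4$ and $\jb \in \HH$, $\phi_{4n\mb}(\jb/(4n)) = e^{\pi i \mb\cdot\jb/2} = 1$, since $\sum_l m_l = 0$ and $j_l \equiv j_4 \pmod 4$ together yield $\mb\cdot\jb = \sum_l m_l(j_l - j_4) \in 4\ZZ$. Consequently, for $\jb \in \HH$ the value $\phi_\kb(\jb/(4n))$ depends only on the class of $\kb$ modulo $4n\ZZ_H^4$. Next, because $4n\Omega_H$ is a fundamental domain for the tiling of $\RR_H^4$ by $4n\ZZ_H^4 \subset \HH$, the set $\HH_n = \HH \cap 4n\Omega_H$ is a complete system of coset representatives for $\HH/(4n\ZZ_H^4)$. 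Given $\kb \in \HH^*_{2n-1}$, let $\tilde\kb \in \HH_n$ be its unique representative. Since the sum defining $\la \cdot,\cdot\ra_n^*$ is taken over $\jb \in \HH_n^* \subset \HH$, the phase identity gives $\la \phi_\kb, 1\ra_n^* = \la \phi_{\tilde\kb}, 1\ra_n^*$. Moreover $\phi_{\tilde\kb}, 1 \in \CH_n$, so Theorem~\ref{ipdH} yields
\[
    \la \phi_{\tilde\kb}, 1\ra_n^* = \la \phi_{\tilde\kb}, 1\ra = \delta_{\tilde\kb, 0}.
\]

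It remains to show that for $\kb \in \HH^*_{2n-1}$ one has $\tilde\kb = 0$ iff $\kb = 0$; equivalently, $\HH^*_{2n-1} \cap 4n\ZZ_H^4 = \{0\}$. Writing $\kb = 4n\mb$ with $\mb \in \ZZ_H^4$, the defining inequality $|k_i - k_j| \le 4(2n-1)$ forces $|m_i - m_j| \le 2 - 1/n < 2$, so the integer differences satisfy $|m_i - m_j| \le 1$. If $\mb \ne 0$, its coordinates take exactly two consecutive values $c$ and $c+1$; writing $k \in \{1,2,3\}$ for the number of coordinates equal to $c+1$, the homogeneity $\sum_l m_l = 0$ reads $4c + k = 0$, which has no integer solution. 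Hence $\mb = 0$ and $\kb = 0$, completing the argument. The principal obstacle is this final case analysis: the margin $2n-1$ in the hypothesis is exactly what forces $|m_i - m_j| < 2$, so the cubature degree $2n-1$ is sharp for this method, and the periodicity identification in the middle paragraph requires the observation that $\HH_n^*\subset\HH$ so that the integer congruence $j_l \equiv j_4 \pmod 4$ is available on every summation point, including the boundary.
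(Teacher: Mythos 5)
Your proof is correct, but it takes a genuinely different route from the paper's. The paper proves the factorization property $\HH^*_{2n-1}=\{\kb-\lb:\ \kb,\lb\in\HH_n\}$ by a three-way case analysis on the coordinate gaps of $\jb\in\HH^*_{2n-1}$ (exhibiting $\kb=(n,n,n,-3n)$, $(3n,-n,-n,-n)$ or $(2n,2n,-2n,-2n)$ according to whether $j_1-j_3$ or $j_2-j_4$ is small), so that every $\phi_\jb\in\CT_{2n-1}$ factors as $\phi_\kb\overline{\phi_\lb}$ with both factors in $\CH_n$, and then Theorem~\ref{ipdH} applies verbatim. You instead test the cubature on each $\phi_\kb$ against $1$ and use aliasing: the identity $\phi_{\kb+4n\mb}(\tfrac{\jb}{4n})=\phi_\kb(\tfrac{\jb}{4n})$ for $\jb\in\HH$, $\mb\in\ZZ_H^4$ (valid precisely because $j_l\equiv j_4\pmod 4$ on every node, including the boundary ones), together with the fact that $\HH_n=\HH\cap 4n\Omega_H$ is a complete set of coset representatives of $\HH/(4n\ZZ_H^4)$ --- which follows from the non-overlapping tiling $\Omega_H+\ZZ_H^4=\RR_H^4$ and is the same device the paper invokes in the proof of Theorem~\ref{interp-H}. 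Your combinatorial burden is then only the much easier statement $\HH^*_{2n-1}\cap 4n\ZZ_H^4=\{0\}$, which your two-consecutive-values argument settles cleanly (for $n\ge 1$; the $n=0$ case is vacuous). What the paper's route buys is the structural fact that every exponential of degree $2n-1$ splits as a product of two exponentials of degree $n$, which is of independent use (it is exactly the Gaussian-type mechanism relating a degree-$n$ interpolation node set to a degree-$(2n-1)$ cubature); what your route buys is a shorter verification and an explicit identification of the sharpness of the degree $2n-1$ as the aliasing threshold. Both rest on Theorem~\ref{ipdH}, so neither is circular.
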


\begin{proof}
If $\kb,\jb\in \HH_n$, then the definition of $\HH^*_m$ implies immediately that
$\kb-\jb\in \HH^*_{2n-1}$.  Suppose now $\jb\in \HH^*_{2n-1}$ and we may
assume that $j_1 \geq j_2 \geq j_3 \geq j_4$. 
There exists $\kb \in \HH_n^*$ such that $\kb - \jb \in \HH_n^*$.
Indeed, if $j_1 - j_3 \le  4n - 4$, we can take $\kb = (n, n, n,
-3n)$. If $j_2 - j_4 \le 4n - 4$, we can take $k = (3n, -n, -n,
-n)$. Finally, if both $j_1 - j_3 \ge 4n$ and $j_2 - j_4 \ge 4n$
then it follows from the definition of $\HH^*_{2n-1}$ that $j_1 -
j_2 \le 4n - 4$ and $j_3 -  j_4 \le 4n - 4$. In this case we can
take $\kb = (2n, 2n, -2n, -2n)$. Consequently, this shows that
\begin{align*}
  \HH^*_{2n-1} = \left\{\lb: \lb = \kb-\jb, \,  \kb,\jb\in \HH_n  \right\}.
\end{align*}
Thus if $\phi_{\jb}\in \CH_{2n-1}$, then $\jb\in \HH^*_{2n-1}$ and there exist
$\kb,\lb \in \HH_n$ such
that $\phi_\jb = \phi_{\kb} \overline{\phi_{\lb}}$. Consequently, the stated result
follows from Theorem \ref{ipdH}.
\end{proof}


\subsection{Interpolation on the rhombic dodecahedron}

For the rhombic dodecahedron, Theorem \ref{thm:interpolation} on
interpolation becomes  the following:

\begin{prop}
For $n> 0$, define
\begin{align*}
   \mathcal{I}_n f(\tb) := \sum_{\jb\in \HH_n} f(\tfrac{\jb}{4n})
           \Phi_n (\tb-\tfrac{\jb}{4n}) ,\quad
   \text{ where }\quad  \Phi_n (\tb) = \frac{1}{4n^3} \sum_{\kb \in \HH_n} \phi_k (\tb),
\end{align*}
for $f\in C(\overline{\Omega}_H)$. Then $\mathcal{I}_nf\in
\mathcal{H}_n$ and
\begin{align*}
    \mathcal{I}_nf(\tfrac{\jb}{4n}) = f(\tfrac{\jb}{4n}), \quad \forall \jb \in \HH_n.
\end{align*}
\end{prop}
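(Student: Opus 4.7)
The plan is to deduce this Proposition as the homogeneous-coordinate translation of the general interpolation theorem, Theorem~\ref{thm:interpolation}, applied with $A$ being the fcc generator matrix and $B = nA$. With this choice the matrix $N = B^\tr A = n A^\tr A$ is symmetric with integer entries, and the sets $\Omega_A$ and $\Omega_B = n\Omega_A$ are symmetric dilates of each other.

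First I would verify the hypothesis $\Lambda_N^\dag = \Lambda_{N^\tr}$ of Theorem~\ref{thm:interpolation}. Since $N$ is symmetric we have $N^\tr = N$, hence $\Lambda_{N^\tr} = \Lambda_N$. The identity $\Lambda_N^\dag = \Lambda_N$ has already been observed at the start of Section 3.3 and is immediate from $B = nA$, which gives $\Omega_B = n\Omega_A$. Consequently Theorem~\ref{thm:interpolation} applies and produces an interpolation operator $\CI_N$ on $\CH_N$ that agrees with $f$ at every point $B^{-\tr}j$, $j\in \Lambda_N$.

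Next I would carry out the translation into homogeneous coordinates, exactly as in Section 3.1. Under the bijection $k \in \ZZ^3 \leftrightarrow \jb = 4H(A^\tr A)^{-1} k \in \HH$, the index set $\Lambda_N = \{k\in \ZZ^3 : B^{-\tr} k \in \Omega_A\}$ corresponds bijectively to $\HH_n = \{\jb\in \HH : \jb/(4n)\in \Omega_H\}$, the sample points $B^{-\tr}j$ become $\jb/(4n)$, and the exponentials obey $e^{2\pi i\, k^\tr A^{-1} x} = e^{\frac{\pi i}{2}\, \jb\cdot \tb} = \phi_\jb(\tb)$. Thus $\CH_N$ is identified with $\CH_n$ and the fundamental interpolation function
\[
\Phi^A_{\Omega_B}(x) = \frac{1}{|\det N|} \sum_{k\in \Lambda_N^\dag} e^{2\pi i\, k^\tr A^{-1} x}
\]
becomes $\Phi_n(\tb) = \frac{1}{4n^3}\sum_{\kb\in \HH_n} \phi_\kb(\tb)$, since $|\det N| = n^3\,|\det(A^\tr A)| = 4n^3$ and $\Lambda_N^\dag = \Lambda_N$.

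Once these translations are in place, the Proposition is immediate: the operator $\CI_n$ is the homogeneous version of $\CI_N$, it lies in $\CH_n$ because each $\Phi_n(\tb - \jb/(4n))$ does, and the interpolation identity $\CI_n f(\jb/(4n)) = f(\jb/(4n))$ follows from the corresponding identity for $\CI_N$. No genuine obstacle arises; the only point requiring care is the bookkeeping that turns $B^{-\tr} j$ into $\jb/(4n)$ and $\Lambda_N$ into $\HH_n$, which is straightforward given the correspondence already worked out in Section 3.1.
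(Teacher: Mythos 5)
Your proposal is correct and is exactly the route the paper takes: the Proposition is stated as the specialization of Theorem~\ref{thm:interpolation} to $A$ and $B=nA$ (the paper gives no separate proof, having already noted in Section 3.3 that $N$ is symmetric and $\Lambda_N^\dag=\Lambda_N$), followed by the homogeneous-coordinate translation of Section 3.1. Your verification of the hypothesis $\Lambda_N^\dag=\Lambda_{N^\tr}$ and the computation $|\det N|=4n^3$ fill in the bookkeeping the paper leaves implicit.
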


Again there is a lack of symmetry in the sense that $\CI_n$ uses
only points in $\HH_n$, which contains only about half of the
boundary points. We are more interested in another interpolation
operator given below, defined over all points in $\HH_n^*$. Although
it does not interpolate at all points in $\HH_n^*$, its symmetric
form can be used to derive results on the tetrahedron in the next
section. Recall $\CS_\kb $ defined in \eqref{CSk}.

\begin{thm} \label{interp-H} 
For $n\geq 0$ and $f\in C(\overline{\Omega}_H)$, define
\begin{align*}
    \mathcal{I}_n^* f(\tb) := \sum_{\jb\in \HH^*_n} f(\tfrac{\jb}{4n}) \ell_{\jb,n} (\tb),
\end{align*}
where
\begin{align*}
  \ell_{\jb,n}(\tb) = \Phi^*_n(\tb-\tfrac{\jb}{4n}) \quad \text{ and }\quad
  \Phi^*_n(\tb) = \frac{1}{4n^3} \sum_{\kb \in \HH^*_n} c_\kb^{(n)} \phi_\kb (\tb).
\end{align*}
Then $\mathcal{I}^*_nf\in \mathcal{T}_n$ and it satisfies
\begin{align}    \label{eq:sym-interp}
         \mathcal{I}^*_n f(\tfrac{\jb}{4n}) = \begin{cases}
         f(\frac{\jb}{4n}), & \jb\in \HH^{\circ}_n,\\
         \\
         \displaystyle \sum_{\kb\in S_{\jb}}  f(\tfrac{\kb}{4n}), & \jb\in \HH^*_n
                 \setminus \HH_n^{\circ}.
\end{cases}
\end{align}
Furthermore, $\Phi_n^*(\tb)$ is a real function and it satisfies
\begin{align} \label{eq:Phi-n*}
 \Phi_n^*(\tb) = & \frac{1}{4n^3} \left[ \frac{1}{2}
         \left( D_n^H(\tb) + D_{n-1}^H(\tb) \right) -  \frac{1}{3}
              \sum_{\nu=1}^4  \frac{\sin(n-1) \pi t_\nu} {\sin \pi t_\nu}
  \sum_{\substack{j=1 \\ j \ne \nu}}^4  \cos n \pi (2 t_j + t_\nu) \right. \notag \\
   & \qquad \left. \qquad  -\frac{1}{2} \sum_{j=1}^4 \cos 2 \pi n t_j   -
       \frac13 \sum_{1 \le \mu<\nu \le 4}  \cos 2\pi  n (t_{\mu} + t_{\nu})  \right].
\end{align}
\end{thm}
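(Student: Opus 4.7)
The plan is to break Theorem~\ref{interp-H} into four assertions and dispatch them in order. First, the fact that $\CI_n^* f\in\CT_n$ is immediate: $\Phi_n^*$ is a linear combination of the $\phi_\kb$ with $\kb\in\HH_n^*$, so $\Phi_n^*\in\CT_n$, hence so are its translates $\ell_{\jb,n}$ and the interpolant $\CI_n^* f$.

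For the interpolation identity \eqref{eq:sym-interp} my plan is to reduce it to the interpolation property of $\Phi_n$ from Theorem~\ref{thm:interpolation} by proving the key identity
\[
\Phi_n^*\!\bigl(\tfrac{\mb-\jb}{4n}\bigr) \;=\; \Phi_n\!\bigl(\tfrac{\mb-\jb}{4n}\bigr),\qquad \mb,\jb\in\HH_n^*.
\]
The crucial observation is that the weight $c_\kb^{(n)}$ of Definition~\ref{defn:Sym-ipd} coincides with $1/|\CS_\kb|$; this follows from Lemma~\ref{congruence} together with the count $|\CS_\kb|=\binom{|I|+|J|}{|I|}$ already used in the proof of Theorem~\ref{ipdH}. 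Since the classes $\CS_\kb$ partition $\HH_n^*$ and each contains a unique representative in $\HH_n$, I can rewrite
\[
4n^3\Phi_n^*(\tb) \;=\; \sum_{\kb\in\HH_n}\frac{1}{|\CS_\kb|}\sum_{\kb'\in\CS_\kb}\phi_{\kb'}(\tb).
\]
At the lattice point $\tb=\tfrac{\mb-\jb}{4n}$ the inner average collapses: writing $\kb'-\kb=4n\boldsymbol{\nu}$ with $\boldsymbol{\nu}\in H\ZZ^3\subset\ZZ_H^4$, one has $\boldsymbol{\nu}\cdot(\mb-\jb)\in 4\ZZ$ because $\mb-\jb\in\HH$ has all four coordinates congruent modulo $4$ while $\sum_i\nu_i=0$. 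Hence $\phi_{\kb'}\!\bigl(\tfrac{\mb-\jb}{4n}\bigr)=\phi_\kb\!\bigl(\tfrac{\mb-\jb}{4n}\bigr)$, the weighted inner sum reduces to $\phi_\kb\!\bigl(\tfrac{\mb-\jb}{4n}\bigr)$, and the outer sum collapses to $4n^3\Phi_n\!\bigl(\tfrac{\mb-\jb}{4n}\bigr)$. By Theorem~\ref{thm:interpolation} this equals $1$ precisely when $\jb\in\CS_\mb$ and $0$ otherwise, yielding \eqref{eq:sym-interp}. The reality of $\Phi_n^*$ I would dispatch in one line: $\HH_n^*$ is invariant under $\kb\mapsto-\kb$, and this flip exchanges $\HH_n^{i,j}$ with $\HH_n^{j,i}$, which share the weight $1/\binom{i+j}{i}$, so pairing $\kb$ with $-\kb$ in the defining sum exhibits $\Phi_n^*$ as real.

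The main work lies in the explicit formula \eqref{eq:Phi-n*}. I would decompose
\[
4n^3\Phi_n^*(\tb) = \sum_{\kb\in\HH_n^\circ}\phi_\kb(\tb) + \sum_{0<i,j<i+j\leq 4}\frac{1}{\binom{i+j}{i}}\,\Sigma^{i,j}(\tb),\qquad \Sigma^{i,j}(\tb):=\sum_{\kb\in\HH_n^{i,j}}\phi_\kb(\tb),
\]
and handle each stratum separately. The interior sum equals $D_{n-1}^H(\tb)$ since $\HH_n^\circ=\HH_{n-1}^*$, so Theorem~\ref{H-partial} applies directly. For each boundary stratum, the explicit descriptions \eqref{eq:B12}--\eqref{eq:B13} and the $\CG$-action on $\HH_n^{i,j}$ let me split $\Sigma^{i,j}$ into a sum of $\CG$-orbits of slabs in which certain differences $k_p-k_q$ are pinned to $\pm 4n$ while the remaining free coordinates traverse arithmetic progressions of common difference~$4$ (reflecting $\kb\in\HH$). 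Each slab becomes a product of one-dimensional geometric series that evaluate in terms of
\[
K_n^*(t):=\sum_{j=1}^{n-1} e^{2\pi ijt} = e^{\pi int}\,\frac{\sin\pi(n-1)t}{\sin\pi t},
\]
together with pure phase factors $e^{\pm 2\pi int_l}$ coming from the pinned coordinates, in the spirit of the proof of Theorem~\ref{H-partial}. Summing over the $S_4$-orbit of each slab and using $t_1+t_2+t_3+t_4=0$ to merge conjugate phase pairs into cosines, the boundary contributions reorganize into the four corrective terms in \eqref{eq:Phi-n*}: the $\tfrac12(D_n^H+D_{n-1}^H)$ term absorbs the interior sum together with the (symmetrized) half of $\Sigma^{1,1}$; the $\tfrac13\sum_\nu$ double-sum captures the edge strata $\Sigma^{1,2}+\Sigma^{2,1}$; and the two pure-cosine sums come from the vertex strata $\Sigma^{1,3}+\Sigma^{3,1}$ and $\Sigma^{2,2}$ respectively. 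I expect the main obstacle to be the bookkeeping here: tracking signs of the pinned phases, matching each $S_4$-orbit of slabs to its target term, and verifying that the combinatorial coefficients in \eqref{eq:Phi-n*} come out exactly as stated.
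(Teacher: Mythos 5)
Your proposal is correct and follows essentially the same route as the paper: the compact formula is obtained by the identical stratification $4n^3\Phi_n^*(\tb) = \tfrac12\bigl(D_n^H(\tb)+D_{n-1}^H(\tb)\bigr) - \sum_{\kb}(\tfrac12-c_\kb^{(n)})\phi_\kb(\tb)$ over the boundary strata $\HH_n^{i,j}$, with the same explicit parametrizations and geometric-series evaluations. The only (harmless) variants are that you verify the interpolation identity by collapsing congruence classes to reduce $\Phi_n^*$ to $\Phi_n$ at the lattice points and then invoking the discrete orthogonality, where the paper instead reduces $\kb-\jb$ modulo $4n\ZZ_H^4$ and applies Theorem~\ref{ipdH}, and that you obtain the reality of $\Phi_n^*$ from the $\kb\mapsto-\kb$ symmetry of the weights rather than from the final formula.
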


\begin{proof}
By definition,
\begin{align*}
\ell_{\jb,n}(\tfrac{\kb}{4n}) =  \Phi_n^*(\tfrac{\kb-\jb}{4n}) =
    \frac{1}{4n^3} \sum_{\lb \in \HH_n^*} c_\lb^{(n)} \phi_\lb(\tfrac{\kb-\jb}{4n}).
\end{align*}
Since $\Omega_H$ tiles $\RR_H^4$, there exist $\mb,\lb\in\ZZ^4_H$ such that
$\frac{\mb}{4n} \in \Omega_H$ and $\kb-\jb=\mb+4n\lb$. Thus, by Theorem \ref{ipdH},
\begin{align*}
\ell_{\jb,n}(\tfrac{\kb}{4n}) =&\frac{1}{4n^3}\sum_{\ib\in \HH^*_n}
 c_{\ib}^{(n)} \phi_{\ib}(\tfrac{\mb}{4n}) = \frac{1}{4n^3}\sum_{\ib\in \HH^*_n}
    c_{\ib}^{(n)} \phi_{\mb}(\tfrac{\ib}{4n}) \\
 =& \langle \phi_{\mb}, \phi_{0}\rangle^*_n =
 \langle \phi_{\m}, \phi_{0}\rangle = \delta_{\mb,0}.
\end{align*}
Equivalently we can write the above equation as
\begin{align} \label{k=jmod}
\begin{split}
\ell_{\jb,n}(\tfrac{\kb}{4n}) = \langle \phi_{\kb}, \phi_{\jb}\rangle_{n}^* = \begin{cases}
 1, & \kb = \jb + 4n \lb,\, \lb\in \ZZ_\HH^4,\\
 0, & \text{otherwise},
\end{cases}
\end{split}
\end{align}
from which \eqref{eq:sym-interp} follows.

To derive the compact formula for $\ell_{\jb,n}$ we start with the obvious fact that
$\partial \HH_n^* =  \HH_n^*\setminus \HH_{n-1}^*$, so that
\begin{align*}
    \sum_{\kb \in \partial \HH_n^*} c_\kb^{(n)} \phi_\kb(\tb) & = \frac{1}{2}
       \sum_{\kb \in \HH_n^*\setminus \HH_{n-1}^*}  \phi_\kb(\tb) -
         \sum_{\kb \in \partial \HH_n^*} (\tfrac{1}{2}- c_\kb^{(n)} ) \phi_\kb(\tb) \\
   & = \frac{1}{2} \left(D_n^H(\tb) - D_{n-1}^H(\tb) \right) -
         \sum_{\kb \in \partial \HH_n^*} (\tfrac{1}{2}- c_\kb^{(n)} ) \phi_\kb(\tb).
\end{align*}
Since $\HH_n^*= \HH_n^\circ \cup \partial \HH_n^*$, we then derive from the
decomposition of $\partial \HH_n^*$ into $\HH_n^{i,j}$ and the values of
$c_\kb^{(n)}$ that
\begin{align} \label{eq:Phi*}
  \Phi^*_n(\tb) = \frac{1}{4n^3}\bigg[&\frac12(D_n^H(\tb)+D_{n-1}^H(\tb))
   - \frac16 \sum_{k\in \HH_n^{1,2}\cup \HH_n^{2,1}} \phi_\kb(\tb)\\
   &- \frac14 \sum_{k\in \HH_n^{1,3}\cup \HH_n^{3,1}} \phi_\kb(\tb)
   - \frac1{3} \sum_{k\in \HH_n^{2,2} } \phi_\kb(\tb)
    \bigg].\notag
\end{align}
Let us define $\HH_n^{I,J}: = \{\kb \in \HH: \tfrac{\kb}{4n} \in B_{I,J}\}$ for $I, J
\subset \NN_4$ and also define $\left[\HH_n^{I,J}\right]: = \{\kb \in \HH:
\tfrac{\kb}{4n} \in [B_{I,J}]\}$. It follows from \eqref{eq:Bij=[B]}, \eqref{eq:Hn^ij}
and Lemma \ref{congruence} that
$$
\HH_n^{i,j} = \bigcup_{I,J\in \CK_0^{i,j}} \left[\HH_n^{I,J} \right] \qquad\hbox{and}\qquad
  \left[\HH_n^{I,J} \right] =  \bigcup_{\sigma\in \mathcal{G}_{I\cup J}} \HH_n^{I,J}\sigma.
$$
In particular, by \eqref{eq:B12}, we have
\begin{align*}
 &\HH_n^{1,2}   = [\HH_n^{\{1\},\{2,3\}}] \cup  [\HH_n^{\{1\},\{2,4\}}]
   \cup [\HH_n^{\{1\},\{3,4\}}]\cup  [\HH_n^{\{2\},\{3,4\}}],\\
& \HH_n^{2,1} =  [\HH_n^{\{1,2\},\{3\}}]\cup [\HH_n^{\{1,2\},\{4\}}]\cup
   [\HH_n^{\{1,3\},\{4\}}]  \cup [\HH_n^{\{2,3\},\{4\}}].
\end{align*}
By \eqref{[B{1,23}]}, $ [\HH_n^{\{1\},\{2,3\}} ] = \HH_n^{\{1\},\{2,3\}}
 \cup  \HH_n^{\{2\},\{1,3\}} \cup \HH_n^{\{3\},\{1,2\}}$. Furthermore, it follows from
 \eqref{eq:Bij} that $[\HH_n^{\{1\},\{2,4\}}]= [\HH_n^{\{1\},\{2,3\}}]\sigma_{34}$,
 $[\HH_n^{\{1\},\{3,4\}}]=  [\HH_n^{\{1\},\{2,3\}}]\sigma_{24}$,
$[\HH_n^{\{2\},\{3,4\}}] = [\HH_n^{\{1\},\{2,3\}}]\sigma_{12}\sigma_{24}$.
Using the explicit formulas in \eqref{B_{1,{2,3}}}, it is easy to see that
\begin{align*}
   \HH_n^{\{1\},\{2,3\}} = \left\{ (j+2n,j-2n,j-2n,2n-3j): 1 \le j \le n-1 \right\}.
\end{align*}
Consequently, using $t_1+t_2+t_3+t_4 = 0$, it follows readily that
\begin{align*}
   \sum_{\kb \in [\HH_n^{\{1\},\{2,3\}}]} \phi_\kb(\tb) & =
       \sum_{\kb \in \HH_n^{\{1\},\{2,3\}}} \phi_\kb(\tb) +
          \sum_{\kb \in \HH_n^{\{2\},\{1,3\}}} \phi_\kb(\tb) +
             \sum_{\kb \in \HH_n^{\{3\},\{1,2\}}} \phi_\kb(\tb) \\
    &  =    \sum_{j=1}^{n-1} e^{-2 \pi i j t_4}  \left(
      e^{2 n \pi i (t_1+t_4)} + e^{2 n \pi i (t_2+t_4)} +e^{2 n \pi i (t_3+t_4)} \right) \\
    & = \frac{\sin (n-1)\pi t_4}{\sin \pi t_4} \left(e^{n \pi i (2 t_1+t_4)} +
          e^{n \pi i (2 t_2+t_4)}+ e^{n \pi i (2 t_3+t_4)} \right),
\end{align*}
where in the last step the sum is evaluated using \eqref{eq:KnCpt}. The
explicit formulas for other components of $\sum_{\kb\in \HH_n^{1,2}} \phi_\kb$
follow from the above expression by permuting the variables. In a similar manner, we
have $[\HH_n^{\{1,2\},\{3\}}] = \HH_n^{\{1,2\},\{3\}}
 \cup  \HH_n^{\{1,3\},\{2\}} \cup \HH_n^{\{2,3\},\{1\}}$, and, using \eqref{eq:Bij} again,
 $[\HH_n^{\{1,2\},\{4\}}]= [\HH_n^{\{1,2\},\{3\}}]\sigma_{34}$,  $[\HH_n^{\{1,3\},\{4\}}]
 =[\HH_n^{\{1,2\},\{3\}}]\sigma_{23}\sigma_{34}$,  $[\HH_n^{\{2,3\},\{4\}}] =
 [\HH_n^{\{1,2\},\{3\}}]\sigma_{13}\sigma_{34}$. Moreover, we also have
$$
\HH_n^{\{1,2\},\{3\}} = \left\{ (2n-j, 2n-j, -2n-j, - 2n+3j): 1 \le j \le n-1 \right\}.
$$
Thus, using $t_1+t_2+t_3+t_4 = 0$, we can deduce as before that
\begin{align*}
   \sum_{\kb \in [\HH_n^{\{1,2\},\{3\}}]} \phi_\kb(\tb) & =
       \sum_{\kb \in \HH_n^{\{1,2\},\{3\}}} \phi_\kb(\tb) +
          \sum_{\kb \in \HH_n^{\{1,3\},\{2\}}} \phi_\kb(\tb) +
             \sum_{\kb \in \HH_n^{\{2,3\},\{1\}}} \phi_\kb(\tb) \\
     & = \frac{\sin (n-1)\pi t_4}{\sin \pi t_4} \left(e^{- n \pi i (2 t_1+t_4)} +
          e^{- n \pi i (2 t_2+t_4)}+ e^{- n \pi i (2 t_3+t_4)} \right),
\end{align*}
from which the explicit formulas of other components of
$\sum_{\kb\in \HH_n^{2,1}}\phi_\kb$ follow from permuting the variables.

Putting the sums over $\HH_n^{1,2}$ and $\HH_n^{2,1}$ together, we obtain
\begin{align*}
   \sum_{\kb\in \HH_n^{1,2}\cup \HH_n^{2,1}} \phi_k(\tb)  =
       2 \sum_{\nu=1}^4  \frac{\sin(n-1) \pi t_\nu}{\sin \pi t_\nu}
          \sum_{\substack{j=1 \\ j \ne \nu}}^4  \cos n \pi (2 t_j + t_\nu).
\end{align*}

Using \eqref{eq:B13}, it is easy to see that
$\HH_n^{1,3} = \{(n,n,n,-3n)\sigma: \sigma \in \CG\}$,
$\HH_n^{2,2} = \{(2n,2n,-2n,-2n)\sigma: \sigma \in \CG\}$ and
$\HH_n^{3,1} = \{(3n,-n,-n,-n)\sigma: \sigma \in \CG\}$, from which it
follows that
\begin{align*}
   \sum_{\kb\in \HH^{1,3}_n\cup \HH^{3,1}_n} \phi_k(\tb) =    \sum_{j=1}^4
   \big( e^{2\pi i n t_j} + e^{-2\pi i n t_j} \big)
   =  2 \sum_{j=1}^4 \cos 2 \pi  n t_j.
\end{align*}
Furthermore, using $t_1+t_2+t_3+t_4 =0$, it is easy to see that
\begin{align*}
 \sum_{\kb\in \HH^{2,2}_n} \phi_\kb(\tb)
  =  \sum_{1 \le \mu<\nu \le 4}  e^{2\pi i n (t_{\mu} + t_{\nu})}
  =   \sum_{1 \le \mu<\nu \le 4}  \cos 2\pi  n (t_{\mu} + t_{\nu}).
\end{align*}

Putting these terms into \eqref{eq:Phi*} completes the proof.
\end{proof}

The compact formula of the interpolation function allows us to estimate the
operator norm of $I_n^*$, which is usually referred to as the Lebesgue constant.

\begin{thm} \label{LebesgueH}
Let $\|I_n^*\|_\infty$ denote the operator norm of $I_n^*: C(\overline{\Omega}_H)
 \mapsto C(\overline{\Omega}_H)$. Then there is a constant $c$, independent of
$n$, such that
$$
       \|I_n^*\|_\infty  \le c (\log n)^3.
$$
\end{thm}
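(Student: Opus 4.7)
The plan is to mirror the proof of Theorem \ref{SnNorm}, replacing the integral estimate of $\int|D_n^H|$ by the corresponding discrete estimate for the weighted sum of $|\Phi_n^*|$. By the standard duality argument,
$$
\|I_n^*\|_\infty = \max_{\tb \in \overline{\Omega}_H}\sum_{\jb\in\HH_n^*}|\ell_{\jb,n}(\tb)| = \max_{\tb\in\overline{\Omega}_H}\sum_{\jb\in\HH_n^*}\left|\Phi_n^*\!\left(\tb-\tfrac{\jb}{4n}\right)\right|,
$$
so the task reduces to bounding this maximum by $c(\log n)^3$. I would then substitute the compact formula \eqref{eq:Phi-n*} for $\Phi_n^*$ and estimate each of its five pieces separately.

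For the dominant piece $\tfrac{1}{8n^3}(D_n^H+D_{n-1}^H)$, I would use the product expansion \eqref{eq:DnH},
$$
D_n^H(\tb) = \sum_{\emptyset\ne J\subseteq\NN_4}(-1)^{|J|+1}\prod_{l\in\NN_4\setminus J}K_n(t_l),
$$
so that $|D_n^H(\tb)|\le 1+\sum_{\emptyset\subsetneq J\subsetneq\NN_4}\prod_{l\notin J}|K_n(t_l)|$, where by \eqref{eq:KnCpt} each factor is $|K_n(t)|=|\sin\pi(n+1)t/\sin\pi t|$. Parametrizing $\HH_n^*$ by three free integer coordinates (the fourth being fixed by the homogeneity constraint), the key input is the classical one-dimensional discrete Lebesgue estimate
$$
\sum_{j}\left|\frac{\sin\pi(n+1)(s-j/(4n))}{\sin\pi(s-j/(4n))}\right|\le c\,n\log n,\qquad s\in\RR,
$$
valid when $j$ runs over an arithmetic progression of length $O(n)$. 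Applied coordinate by coordinate, the term indexed by $J$ then contributes at most $(n\log n)^{4-|J|}\,n^{|J|-1}/n^3 = (\log n)^{4-|J|}$; the worst case $|J|=1$ produces exactly the asserted $(\log n)^3$.

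The remaining pieces of \eqref{eq:Phi-n*} are of strictly lower order. The third piece contains a single sine ratio $\sin(n-1)\pi t_\nu/\sin\pi t_\nu$ multiplied by bounded cosine factors, so only one 1D logarithm appears and the same argument yields $O(\log n)$; the pure cosine sums $\sum_j\cos 2\pi n t_j$ and $\sum_{\mu<\nu}\cos 2\pi n(t_\mu+t_\nu)$ have modulus bounded by a constant, so after summing over $|\HH_n^*|=O(n^3)$ points and dividing by $4n^3$ they contribute $O(1)$. Combining all five contributions gives $\|I_n^*\|_\infty\le c(\log n)^3$.

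The main technical obstacle is decoupling the four factors $K_n(t_l-j_l/(4n))$ in the discrete sum in the presence of both the homogeneity constraint $j_1+j_2+j_3+j_4=0$ and the congruence condition $j_1\equiv j_2\equiv j_3\equiv j_4\pmod 4$ that defines $\HH$. This is the discrete analogue of the change of variables used in Theorem \ref{SnNorm}. Once three of the four indices are declared free and the dependent factor is written as $K_n$ evaluated at a linear combination of them, an affine change of summation indices reduces the expression to a product of independent one-dimensional sums, each of the form above; one also needs to verify uniformity in $\tb$ over $\overline{\Omega}_H$, which follows from the periodicity of $K_n$.
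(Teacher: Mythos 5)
Your proposal is correct and follows essentially the same route as the paper: reduce $\|I_n^*\|_\infty$ to $\max_{\tb}\sum_{\kb}|\Phi_n^*(\tb-\tfrac{\kb}{4n})|$, pass to the sum of $|D_n^H|$ via the compact formula, expand $D_n^H$ by \eqref{eq:DnH} into products of $K_n$ factors, and decouple the constrained lattice sum into independent one-dimensional sums each bounded by the classical discrete Lebesgue-constant estimate $O(n\log n)$. You are merely more explicit than the paper about the lower-order pieces of \eqref{eq:Phi-n*} and the terms with $|J|>1$, which the paper dispatches with ``it is easy to see'' and by noting that the three-factor products dominate.
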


\begin{proof}
A standard procedure shows that
$$
   \|I_n^*\|_\infty = \max_{\tb \in \overline{\Omega}_H}  \sum_{\kb \in \HH_n^*}
         \left|\Phi_n^*(\tb-\tfrac{\kb}{4n})\right|.
$$
Using the compact formula of $\Phi_n^*$ in Theorem \ref{interp-H}, it is
easy to see that it suffices to prove that
$$
\frac{1}{4n^3}  \max_{\tb\in \overline{\Omega}_H}  \sum_{\kb \in \HH_n^*}
         \left|D_n^H(\tb-\tfrac{\kb}{4n})\right| \le c (\log n)^3, \qquad n \ge 0.
$$
Furthermore, as in the proof of Theorem \ref{SnNorm}, the formula of $D_n^H$
in \eqref{eq:DnH} shows that our main task is to establish the estimate
$$
I_{\{1,2,3\}}:= \frac{1}{4n^3}\max_{\tb\in \overline{\Omega}_H}  \sum_{\kb \in \HH_n^*}
 \left| K_n(t_1 -\tfrac{k_1}{4n})K_n(t_2 -\tfrac{k_2}{4n})K_n(t_3 -\tfrac{k_3}{4n})\right|
    \le c (\log n)^3,
$$
and three other similar estimates $I_{\{1,2,4\}}$, $I_{\{1,3,4\}}$ and $I_{\{2,3,4\}}$, respectively. Enlarging the domain $\HH_n^*$
to $\{\kb \in \ZZ_H^4: -4n \le k_i \le 4n, \, k_i = 0 \pmod{4}, \,1 \le i \le 3\}$, we see
that
\begin{align*}
   I_{\{1,2,3\}} & \le \frac{1}{4n^3}  \max_{t \in {[-1,1]^3}}  \sum_{k_1=-n}^{n}
         \sum_{k_2=-n}^{n} \sum_{k_3= -n}^{n}
      \left| K_n(t_1 -\tfrac{k_1}{n})K_n(t_2 -\tfrac{k_2}{n})K_n(t_3 -\tfrac{k_3}{n})\right|\\
   &   \le    \frac14  \max_{t \in {[-1,1]}} \left (\frac{1}{n}
        \sum_{k = -n}^{n} \left | \frac{\sin (n+1) \pi (t -\tfrac{k}{n})}
          {\sin\pi (t -\tfrac{k}{n})} \right|  \right)^3   \le c (\log n)^3,
\end{align*}
where the last step follows from the standard estimate of one variable
(cf. \cite[Vol. II, p. 19]{Z}).
\end{proof}

Again we expect that the estimate is sharp, that is, $\|I_n^*\| \ge c (\log n)^3$; and
the problem is again that there is a sum of four terms of the same type.


\section{Discrete Fourier analysis on the Tetrahedron}
\setcounter{equation}{0}

Considering functions invariant under the isometrics of the fcc lattice,
the discrete Fourier analysis on the dodecahedron in the previous section
can be carried over to the analysis on the tetrahedron.

\subsection{Generalized sine and cosine functions}
The fcc lattice is the root lattice of the reflection group $\A_3$ \cite[Chapt. 4]{CS}.
Under the homogeneous coordinates, the group $\A_3$ is generated by the
reflections $\{\sigma_{ij}: 1 \le i < j \le 4\}$, where $\sigma_{ij}$ is the reflection
defined by
$
    \tb \sigma_{ij} = \tb - 2 \frac{\la \tb, \eb_{i,j}\ra}{\la \eb_{i,j},\eb_{i,j}\ra} \eb_{i,j}
$
with $\eb_{i,j} = e_i- e_j$ as before. Thus $\A_3$ is the permutation group in the
previous section. Denote the identity element in $\A_3$ by $1$. It is easy to see
that we have
$$
  \sigma_{ij}^2 =1, \qquad \sigma_{ij}\sigma_{jk}\sigma_{ij} = \sigma_{ik},
    \quad i,j,k \in \NN_4.
$$
For $\sigma \in \G = \A_3$, let $|\sigma|$ denote the number
of inversions in $\sigma$. The group $\G$ is naturally divided into two parts,
$\G^+: = \left\{\sigma\in \mathcal{G}:\  |\sigma| \equiv 0 \pmod{2} \right\}$ of
elements with even inversions, and
$\G^{-}: = \left\{\sigma\in \mathcal{G}:\  |\sigma| \equiv 1 \pmod{2} \right\}$ of
elements with odd inversions. Writing it out explicitly, we have
\begin{align*}
 \G^+ = \big\{ & 1,  \sigma_{12}\sigma_{13}, \sigma_{13}\sigma_{12},
    \sigma_{12}\sigma_{14}, \sigma_{14}\sigma_{12}, \sigma_{13}\sigma_{14}, \\
 & \sigma_{14}\sigma_{13}, \sigma_{23}\sigma_{24},
    \sigma_{24}\sigma_{23}, \sigma_{12}\sigma_{34}, \sigma_{13}\sigma_{24},
    \sigma_{14}\sigma_{23}\big\}\\
 \G^- = \big\{ & \sigma_{12}, \sigma_{13}, \sigma_{14}, \sigma_{23}, \sigma_{24},
  \sigma_{34}, \sigma_{12}\sigma_{13} \sigma_{14},
  \sigma_{12}\sigma_{14}\sigma_{13}, \\
 & \qquad  \sigma_{13}\sigma_{12}\sigma_{14}, \sigma_{13}\sigma_{14}\sigma_{12},
   \sigma_{14}\sigma_{12}\sigma_{13}, \sigma_{14}\sigma_{13}\sigma_{12}  \big\}.
\end{align*}
The action of $\sigma \in \G$ on the function $f: \RR_H^4 \mapsto \RR$ is defined
by $\sigma f (\tb):= f(\tb \sigma)$. A function $f$ in homogeneous coordinates is
called {\it invariant} under $\G$ if $\sigma f = f$  for all $\sigma \in \G$, and it is
called {\it anti-invariant} under $\G$ if $\sigma f = \rho(\sigma)f$ with
$\rho(\sigma) =1$ if $\sigma \in \G^+$ and $\rho(\sigma) = -1$ if $\sigma \in \G^-$.

The following proposition follows immediately from the definition.

\begin{prop}
Define two operator $\mathcal{P}^{+}$ and $\mathcal{P}^{-}$ acting on $f(\tb)$ by
\begin{align} \label{eq:Ppm}
\mathcal{P}^{\pm}f(\tb) := \frac1{24} \bigg[ \sum_{\sigma\in \mathcal{G}^{+}}
   f(\tb \sigma)\pm \sum_{\sigma\in \mathcal{G}^{-}} f(\tb \sigma) \bigg].
\end{align}
Then the operators $\mathcal{P}^{+}$ and $\mathcal{P}^{-}$ are projections from
the class of $H$-periodic functions onto the class of invariant, and respectively
anti-invariant functions.
\end{prop}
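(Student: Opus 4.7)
The statement bundles together three facts: (i) each $\mathcal{P}^{\pm}$ maps the space of $H$-periodic functions into itself; (ii) $\mathcal{P}^{+}f$ is invariant and $\mathcal{P}^{-}f$ is anti-invariant; (iii) $\mathcal{P}^{\pm}$ act as the identity on their respective target classes, and hence are projections. My plan is to verify these three points in order by short direct computations from the definition \eqref{eq:Ppm}.

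For (i) I would observe that if $\kb\in \ZZ_H^4$, then $\kb\sigma\in \ZZ_H^4$ for every $\sigma\in\G$, since permuting coordinates preserves both integrality and the homogeneity condition $k_1+k_2+k_3+k_4=0$. Hence, for an $H$-periodic $f$ and any $\sigma\in\G$,
\begin{equation*}
   f((\tb+\kb)\sigma)=f(\tb\sigma+\kb\sigma)=f(\tb\sigma)
\end{equation*}
by Lemma~\ref{lm:periodicity}, so each of the two partial sums in \eqref{eq:Ppm} is $H$-periodic and therefore so is $\mathcal{P}^{\pm}f$.

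For (ii) I would use the standard left-translation argument on $\G$. Pick any $\sigma_0\in\G$; since $\tau\mapsto\sigma_0\tau$ is a bijection of $\G$ that preserves $\G^{+}$ and exchanges $\G^{+}$ with $\G^{-}$ according to the parity of $\sigma_0$ (because $|\sigma_0\tau|\equiv|\sigma_0|+|\tau|\pmod 2$), the identity
\begin{equation*}
  \mathcal{P}^{\pm}f(\tb\sigma_0)=\frac1{24}\Bigl[\sum_{\tau\in\G^{+}}f(\tb\sigma_0\tau)\pm\sum_{\tau\in\G^{-}}f(\tb\sigma_0\tau)\Bigr]
\end{equation*}
becomes $\mathcal{P}^{+}f(\tb)$ unconditionally for the $+$ case, and becomes $\rho(\sigma_0)\mathcal{P}^{-}f(\tb)$ for the $-$ case. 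This gives $\sigma_0\mathcal{P}^{+}f=\mathcal{P}^{+}f$ and $\sigma_0\mathcal{P}^{-}f=\rho(\sigma_0)\mathcal{P}^{-}f$, which are precisely invariance and anti-invariance.

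For (iii), if $f$ is already invariant then every summand in $\mathcal{P}^{+}f(\tb)$ equals $f(\tb)$, so $\mathcal{P}^{+}f=\frac{24}{24}f=f$; if $f$ is anti-invariant, then each summand in $\mathcal{P}^{-}f(\tb)$ equals $\rho(\sigma)f(\tb)$, and the signs conspire so that $\mathcal{P}^{-}f=\frac{1}{24}(12+12)f=f$. Combining this with (ii) yields idempotency $(\mathcal{P}^{\pm})^{2}=\mathcal{P}^{\pm}$, and with (i) shows that the target is exactly the invariant (respectively anti-invariant) $H$-periodic functions. I do not expect a genuine obstacle here; the only point that requires a moment's care is the invariance of $\ZZ_H^4$ under $\G$ used in step (i), which is precisely why homogeneous coordinates were introduced.
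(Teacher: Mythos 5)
Your proposal is correct. The paper in fact gives no proof at all --- it states that the proposition ``follows immediately from the definition'' --- and your three-step verification (stability of $H$-periodicity under the permutation action of $\mathcal{G}$ on $\ZZ_H^4$, the left-translation argument using $|\sigma_0\tau|\equiv|\sigma_0|+|\tau|\pmod 2$, and the fixed-point computation giving idempotency) is exactly the standard argument the authors are leaving to the reader.
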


Applying the operators $\CP^\pm$ to $\phi_\kb(\tb) = e^{\frac{\pi
i}{2} \kb \cdot \tb}$ gives basic invariant and anti-invariant
functions, which we denote by $\TC_\kb$ and $\TS_\kb$, respectively,
as they are analogues of cosine and sine functions. We formerly
define them as follows.

\begin{defn} 
For $\kb \in \HH$ define
\begin{align*}
&\TC_{\kb}(\tb):=\mathcal{P}^{+}\phi_{\kb}(\tb) = \frac1{24\
} \bigg[ \sum_{\sigma\in \mathcal{G}^{+}}\phi_{\kb}(\tb \sigma)
+ \sum_{\sigma\in \mathcal{G}^{-}} \phi_{\kb}(\tb \sigma) \bigg],\\
&\TS_{\kb}(\tb):= - \mathcal{P}^{-}\phi_{\kb}(\tb) = - \frac1{24}
 \bigg[ \sum_{\sigma\in \mathcal{G}^{+}}\phi_{\kb}(\tb \sigma)
- \sum_{\sigma\in \mathcal{G}^{-}} \phi_{\kb}(\tb \sigma) \bigg],
\end{align*}
and call them \emph{generalized cosine} and \emph{generalized sine},
respectively.
\end{defn}

Evidently $\TC_{\kb}$ is invariant and $\TS_{\kb}$ is anti-invariant.
The rhombic dodecahedron is invariant under our group $\G$ of order 24; its
fundamental domain is a tetrahedron. For invariant functions, we can make use
of symmetry to translate results on the rhombic dodecahedron to one of its 24
tetrahedrons. We shall choose our reference tetrahedron as
\begin{equation}\label{triangle}
\triangle : = \{x \in \RR^3: 0 \le x_3 \pm x_2, x_2 \pm x_1  \le 1\}.
\end{equation}
In the homogeneous coordinates, by \eqref{coordinate}, this tetrahedron becomes
\begin{align*}
   \triangle_H := \left\{ \tb \in \RR_H^4:
               0\leq t_1-t_2,t_2-t_3,t_3-t_4,t_1-t_4 \leq 1\right\}.
\end{align*}
See Figure 4.1 below, in which coordinates of the corners are given
in both $\RR^3$ coordinates and homogeneous coordinates in $\RR^4$.

\begin{figure}[htb]
\centering
\includegraphics[width=0.6\textwidth]{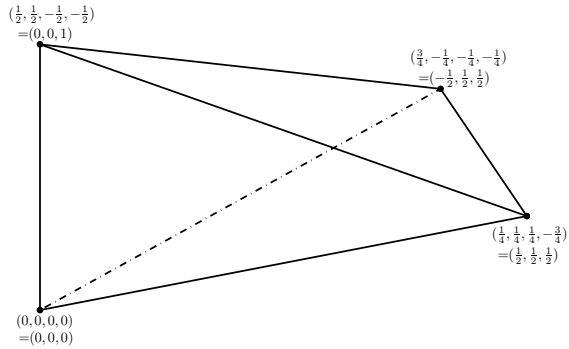}
\caption{Reference tetrahedron.}
\end{figure}

When $\TC_\kb$ are restricted to the tetrahedron $\triangle_H$, we only need
to consider a subset of $\kb\in \HH$. In fact,  it is easy to see that
\begin{align}
\label{eq:TCs}
     \TC_{\kb \sigma}(\tb) = \TC_{\kb}(\tb\sigma) = \TC_{\kb}(\tb)
\end{align}
for $\tb\in\triangle_H$ and $\sigma\in\G$. Thus, we can restrict $\kb$
to the index set
\begin{align*}
\Lambda:= \left\{ \kb\in \HH: k_1 \geq k_2 \geq k_3\geq k_4\right\}
\end{align*}
when we consider $\TC_\kb$. As for $\TS_{\kb}$, it is easy to see that
$\TS_{\kb\sigma}(\tb)= \TS_{\kb}(\tb\sigma) = \TS_{\kb}(\tb)$ for
$\sigma\in\mathcal{G}^{+}$  and  $\TS_{\kb\sigma}(\tb)= \TS_{\kb}(\tb\sigma)
= -\TS_{\kb}(\tb) $ for $\sigma\in\mathcal{G}^{-}$. In particular,
$\TS_{\kb}(\tb)=0$ whenever two or more components of $\kb$ are
equal. Thus $\TS_{\kb}$ are defined only for $\kb\in \Lambda^{\circ}$,
where
\begin{align*}
\Lambda^{\circ}:= \left\{ \kb\in \HH: k_1>k_2>k_3>k_4 \right\},
\end{align*}
which is the set of the interior points of $\Lambda$. Since $\kb \in \HH$ implies
that $k_i  \in \ZZ$ and $k_1 + k_2+k_3+k_4 =0$, the points in $\Lambda$ lies in
a three dimensional wedge. To describe the points on the boundary of $\Lambda$,
we further define
\begin{align*}
 \Lambda^{f} & := \left\{ (\kb \in \HH: k_1=k_2>k_3>k_4 \text{or}
                 k_1>k_2=k_3>k_4 \text{or} k_1>k_2>k_3=k_4  \right\},\\
     \Lambda^{e,2} & := \left\{(k,k,k,-3k), (3k,-k,-k,-k):\   k>0 \right\}, \\
    \Lambda^{e,1} & := \left\{(2k,2k,-2k,-2k):\   k>0 \right\},  \quad
     \Lambda^v  := \{ (0,0,0,0)\}.
\end{align*}
Then evidently
$ \Lambda \setminus \Lambda^\circ =  \Lambda^f \cup
       \Lambda^{e,1} \cup \Lambda^{e,2} \cup \Lambda^v.$

Let $\kb\mathcal{G}$ denote the orbit of $\kb $ under $\mathcal{G}$, that is,
$\kb \G:= \left\{\kb\sigma :\ \sigma\in \mathcal{G}\right\}$. Then,
for $\kb, \jb \in \Lambda $, $\kb\mathcal{G}\cap \jb\mathcal{G} = \emptyset$
whenever $\kb\neq  \jb$. Furthermore, it is easy to see that
\begin{align}
\label{eq:TC-redef}
 \TC_{\kb} (\tb) = \frac{1}{|\kb\mathcal{G}|}
    \sum_{\jb\in  \kb \mathcal{G}} \phi_{\jb},\qquad   |\kb\mathcal{G}| =
  \begin{cases}
        24, & \kb\in \Lambda^{\circ},\\
        12, & \kb\in \Lambda^{f},\\
        6, & \kb\in \Lambda^{e,1},\\
        4, & \kb\in \Lambda^{e,2},\\
        1, & \kb=0\in \Lambda^{v}.
  \end{cases}
\end{align}

We define an inner product on $\triangle_H$ by
\begin{align*}
\langle f, g \rangle_{\triangle_H}:=
\frac{1}{|\triangle_H|} \int_{\triangle_H} f(\tb) \overline{g({\tb})} d\tb
   = 12\int_{\triangle_H} f(\tb) \overline{g({\tb})} dt_1dt_2dt_3.
\end{align*}
If $f\bar g$ is invariant under $\mathcal{G}$, then it follows immediately that
$\langle f, g \rangle = \langle f, g \rangle_{\triangle_H}$. Furthermore, the
generalized cosine and sine functions are orthogonal with respect to this
inner product.

\begin{prop} 
For $\kb,\jb\in \Lambda$,
 \begin{align}
 \label{eq:orth-TC}
   \langle \TC_{\kb}, \TC_{\jb}\rangle_{\triangle_H}  =
     \frac{\delta_{\kb,\lb}}{|\kb\mathcal{G}|}
   = \delta_{\kb,\lb}    \begin{cases} 1, & \kb = 0,\\
   \frac14, &  \kb\in \Lambda^{e, 2},\\
   \frac16, &  \kb\in \Lambda^{e, 1},\\
   \frac1{12},   & \kb\in \Lambda^{f}\\
   \frac{1}{24}, & \kb\in \Lambda^{\circ};
   \end{cases}
 \end{align}
for $\kb, \jb \in \Lambda^{\circ}$,
 \begin{align}
 \label{eq:orth-TS}
 \langle \TS_{\kb}, \TS_{\jb}\rangle_{\triangle_H}  = \frac1{24}\delta_{\kb,\jb}.
 \end{align}
\end{prop}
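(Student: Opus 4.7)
The plan is to transfer the orthogonality on the dodecahedron $\Omega_H$ (Proposition~\ref{prop:ortho-H}) to the tetrahedron $\triangle_H$ by exploiting the fact that $\triangle_H$ is a fundamental domain of $\Omega_H$ under the action of $\G$. Since $\Omega_H$ is tiled (up to a measure-zero set) by the $24$ images $\triangle_H\sigma$, we have $|\Omega_H| = 24|\triangle_H|$, and for any $\G$-invariant function $h$ a change of variables gives $\int_{\Omega_H} h\,d\tb = 24 \int_{\triangle_H} h\,d\tb$. The normalizations in the two inner products then cancel, yielding
\begin{align*}
\langle f,g\rangle = \langle f,g\rangle_{\triangle_H}\qquad\text{whenever } f\overline{g}\ \text{is }\G\text{-invariant.}
\end{align*}
For generalized cosines this applies directly since $\TC_\kb$ is invariant; for generalized sines it also applies, since the product of two anti-invariant functions is invariant (the sign $\rho(\sigma)^2 = 1$).

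Next I would substitute the orbit expansion \eqref{eq:TC-redef}, namely $\TC_\kb = |\kb\G|^{-1}\sum_{\kb'\in \kb\G} \phi_{\kb'}$, and expand the inner product as a double sum
\begin{align*}
\langle \TC_\kb,\TC_\jb\rangle_{\Omega_H} = \frac{1}{|\kb\G|\,|\jb\G|}\sum_{\kb'\in \kb\G}\sum_{\jb'\in \jb\G}\langle \phi_{\kb'},\phi_{\jb'}\rangle.
\end{align*}
By Proposition~\ref{prop:ortho-H} each term equals $\delta_{\kb',\jb'}$. Since distinct points of $\Lambda$ have disjoint $\G$-orbits, the double sum vanishes for $\kb\ne \jb$; for $\kb=\jb$ only the $|\kb\G|$ diagonal terms $\kb'=\jb'$ survive, yielding $1/|\kb\G|$. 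Reading off $|\kb\G|$ from \eqref{eq:TC-redef} gives \eqref{eq:orth-TC}.

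For the sine orthogonality \eqref{eq:orth-TS}, restrict to $\kb,\jb\in\Lambda^\circ$ so that all $24$ permutations $\kb\sigma$ are distinct. Write
\begin{align*}
\TS_\kb(\tb) = \tfrac{1}{24}\sum_{\sigma\in\G}\epsilon(\sigma)\,\phi_{\kb\sigma}(\tb),
\end{align*}
where $\epsilon(\sigma) = -1$ on $\G^+$ and $+1$ on $\G^-$, and expand $\langle \TS_\kb,\TS_\jb\rangle_{\Omega_H}$ as a double sum over $\sigma,\tau\in\G$. Again orthogonality of the $\phi$'s kills all terms unless $\kb\sigma=\jb\tau$; for $\kb\ne\jb$ in $\Lambda^\circ$ the orbits are disjoint, and for $\kb=\jb$ only the $24$ diagonal terms $\sigma=\tau$ contribute. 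These contribute $\epsilon(\sigma)^2 = 1$ each, giving $24\cdot (1/24^2) = 1/24$.

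The only real subtlety is the change-of-variables step justifying $\int_{\Omega_H} h = 24\int_{\triangle_H} h$ for invariant (and, in the sine case, anti-invariant-squared, hence invariant) integrands; this needs the fact that the images $\triangle_H\sigma$ meet only on lower-dimensional faces of $\Omega_H$, which follows from the description of $\triangle_H$ in \eqref{triangle} together with the results in Section~3.2 on boundary elements. Once this is in hand, everything else is bookkeeping: matching orbit sizes on the two sides and invoking Proposition~\ref{prop:ortho-H}.
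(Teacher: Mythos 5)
Your proposal is correct and follows essentially the same route as the paper: transfer $\langle\cdot,\cdot\rangle_{\triangle_H}$ to $\langle\cdot,\cdot\rangle$ on $\Omega_H$ using invariance of $\TC_\kb\overline{\TC_\jb}$ (respectively of the product of two anti-invariant functions $\TS_\kb\overline{\TS_\jb}$), then expand via the orbit sum \eqref{eq:TC-redef} and invoke the orthogonality of the $\phi_\kb$ from Proposition \ref{prop:ortho-H}. The paper states this argument in three sentences; you have merely filled in the bookkeeping (disjointness of orbits for distinct elements of $\Lambda$, the diagonal-term count, and the fundamental-domain justification of $\langle f,g\rangle=\langle f,g\rangle_{\triangle_H}$), all of which is consistent with the paper's intent.
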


\begin{proof}
Both of these relations follow from the identity $\langle f, g \rangle =
\langle f, g \rangle_{\triangle_H}$ for invariant functions. For \eqref{eq:orth-TC},
the invariance is evident and we only have to use the orthogonality of $\phi_\kb$
in Proposition \ref{prop:ortho-H} and \eqref{eq:TC-redef}. For \eqref{eq:orth-TS},
we use the fact that $\TS_{\kb}(\tb)\overline{\TS_{\jb}(\tb)}$ is invariant under $\G$
and the orthogonality of $\phi_\kb$ on $\Omega_H$.
\end{proof}

We can derive more explicit formulas for the generalized cosine and sine functions
by making use of the homogeneous coordinates. For example, we have
\begin{align*}
& \phi_{\kb}({t_1,t_2,t_3,t_4})-\phi_{\kb}({t_2,t_1,t_3,t_4})-\phi_{\kb}({t_1,t_2,t_4,t_3})+\phi_{\kb}({t_2,t_1,t_4,t_3})\\
& =\big[ e^{\frac{\pi i}{2}(k_1t_1+k_2t_2)}-e^{\frac{\pi i}{2}(k_1t_2+k_2t_1)}
 \big]
\big[ e^{\frac{\pi i}{2}(k_3t_3+k_4t_4)}-e^{\frac{\pi i}{2}(k_3t_4+k_4t_3)}
\big]\\
 & = 2i e^{\frac{\pi i}{4}(k_1+k_2)(t_1+t_2)} \sin\tfrac{\pi (k_1-k_2)(t_1-t_2)}{4}
 \cdot 2i  e^{\frac{\pi i}{4}(k_3+k_4)(t_3+t_4)}  \sin\tfrac{\pi (k_3-k_4)(t_3-t_4)}{4}\\
& =-4e^{\frac{\pi i}{4}[(k_1+k_2)(t_1+t_2)+(k_3+k_4)(t_3+t_4)]}
   \sin \tfrac{\pi (k_1-k_2)(t_1-t_2)}{4}
   \sin \tfrac{\pi(k_3-k_4)(t_3-t_4)}{4}\\
&= \phi_{\kb}({t_3,t_4,t_1,t_2})-\phi_{\kb}({t_4,t_3,t_1,t_2})-
      \phi_{\kb}({t_3,t_4,t_2,t_1})+\phi_{\kb}({t_4,t_3,t_2,t_1}).
\end{align*}
Similarly or by permuting the variables, we also have,
\begin{align*}
&\phi_{\kb}({t_1,t_3,t_4,t_2})-\phi_{\kb}({t_3,t_1,t_4,t_2})-\phi_{\kb}({t_1,t_3,t_2,t_4})+\phi_{\kb}({t_3,t_1,t_2,t_4})\\
& =\phi_{\kb}({t_4,t_2,t_1,t_3})-\phi_{\kb}({t_4,t_2,t_3,t_1})-\phi_{\kb}({t_2,t_4,t_1,t_3})+\phi_{\kb}({t_2,t_4,t_3,t_1})\\
& = -4e^{\frac{\pi i}{4}[(k_1+k_2)(t_1+t_3)+(k_3+k_4)(t_2+t_4)]}
\sin \tfrac{\pi (k_1-k_2)(t_1-t_3)}{4} \sin \tfrac{\pi (k_3-k_4)(t_4-t_2)}{4},
\end{align*}
and
\begin{align*}
 & \phi_{\kb}({t_1,t_4,t_2,t_3})-\phi_{\kb}({t_4,t_1,t_2,t_3})-\phi_{\kb}({t_1,t_4,t_3,t_2})+\phi_{\kb}({t_4,t_1,t_3,t_2})\\
 & =\phi_{\kb}({t_2,t_3,t_1,t_4})-\phi_{\kb}({t_2,t_3,t_4,t_1})-\phi_{\kb}({t_3,t_2,t_1,t_4})+\phi_{\kb}({t_3,t_2,t_4,t_1})\\
& = -4 e^{\frac{\pi i}{4}[(k_1+k_2)(t_1+t_4)+(k_3+k_4)(t_2+t_3)]}
\sin \tfrac{\pi (k_1-k_2)(t_1-t_4)}{4} \sin \tfrac{\pi (k_3-k_4)(t_2-t_3)}{4}.
\end{align*}
Consequently, using the homogeneous relations $t_1+t_2+t_3+t_4=0$ and
$k_1+k_2+k_3+k_4=0$, we can then deduce that
\begin{align}
\label{eq:TS}
   \TS_{\kb}(\tb)
   =&\tfrac{1}{3} e^{\frac{\pi i}{2}(k_1+k_2)(t_1+t_2)} \sin{\tfrac{\pi }{4}(k_1-k_2)(t_1-t_2)} \sin{\tfrac{\pi }{4}(k_3-k_4)(t_3-t_4)}\\
   +&\tfrac{1}{3} e^{\frac{\pi i}{2}(k_1+k_2)(t_1+t_3)} \sin{\tfrac{\pi }{4}(k_1-k_2)(t_1-t_3)} \sin{\tfrac{\pi }{4}(k_3-k_4)(t_4-t_2)}\notag\\
   +&\tfrac{1}{3} e^{\frac{\pi i}{2}(k_1+k_2)(t_1+t_4)} \sin{\tfrac{\pi }{4}(k_1-k_2)(t_1-t_4)} \sin{\frac{\pi }{4}(k_3-k_4)(t_2-t_3)}\notag.
\end{align}
In a similar way, we obtain
\begin{align}
\label{eq:TC}
   \TC_{\kb}(\tb)
   =&\tfrac{1}{3} e^{\frac{\pi i}{2}(k_1+k_2)(t_1+t_2)} \cos{\tfrac{\pi }{4}(k_1-k_2)(t_1-t_2)} \cos{\tfrac{\pi }{4}(k_3-k_4)(t_3-t_4)}\\
   +&\tfrac{1}{3} e^{\frac{\pi i}{2}(k_1+k_2)(t_1+t_3)} \cos{\tfrac{\pi }{4}(k_1-k_2)(t_1-t_3)} \cos{\tfrac{\pi }{4}(k_3-k_4)(t_4-t_2)}\notag\\
   +&\tfrac{1}{3} e^{\frac{\pi i}{2}(k_1+k_2)(t_1+t_4)} \cos{\tfrac{\pi }{4}(k_1-k_2)(t_1-t_4)} \cos{\tfrac{\pi }{4}(k_3-k_4)(t_2-t_3)}.\notag
\end{align}
Permuting variables $t_1,t_2,t_3,t_4$ lead to other representations of   $\TS_\kb$
and $\TC_\kb$.


\subsection{Discrete inner product on the tetrahedron}

Using the fact that $\TC_k$ and $\TS_k$ are invariant and anti-invariant
under $\G$ and the orthogonality of $\phi_\kb$ with respect to the symmetric
inner product $\langle \cdot, \cdot \rangle_n^*$, we can deduce a discrete
orthogonality for the generalized cosine and sine functions. For this purpose,
we define
\begin{align} \label{eq:Lambda_n}
\Lambda_n : = \HH_n^* \cap \Lambda =
     \left\{\kb\in \HH:  k_4\leq k_3\leq k_2\leq k_1\leq k_4+4n  \right \}.
\end{align}
The point set with $n=4$ and the region is depicted in Figure 4.1.

\begin{figure}[htb]
\centering
\includegraphics[width=0.6\textwidth]{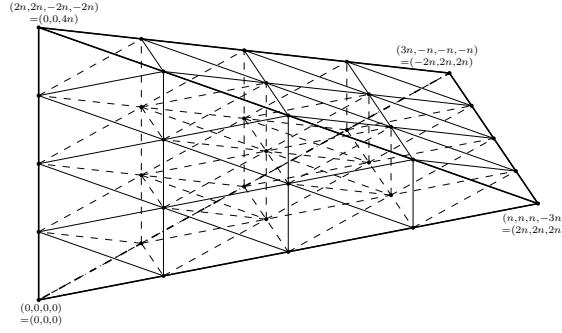}
\caption{$\Lambda_n$ with $n =4$}
\end{figure}

By the definition of $\HH_n^*$,  the set
$\{\tfrac{\kb}{4n}: \kb \in \Lambda_n\}$ contains points inside $\triangle_H$.
We will also need notation for points on the boundary of $\triangle_H$, which are
defined as
\begin{align} \label{eq:Lambda-n2}
\begin{split}
&\Lambda^{\circ}_n: = \HH_n^\circ \cap \Lambda^\circ, \qquad
\Lambda_n^{f} : = \left(\Lambda^f \cap \HH_n^\circ \right) \cup
     \left(\Lambda^\circ \cap \HH_n^{1,1}\right),  \\
&\Lambda^{e,1}_n: = \left(\Lambda^{e,1} \cap \HH_n^\circ \right) \cup
     \left(\Lambda^f \cap \HH_n^{1,1}\right),\\
& \Lambda^{e,2}_n: = \left(\Lambda^{e,2} \cap \HH_n^\circ \right) \cup
 \left(\Lambda^f \cap \HH_n^{1,2}\right)\cup \left (\Lambda^f \cap \HH_n^{2,1}\right), \\
& \Lambda^{v}_n: = \{0\} \cup \left( \Lambda^{e,1} \cap \HH_n^{2,2} \right)
         \cup \left( \Lambda^{e,2} \cap \HH_n^{1,3} \right)\cup \left
         (\Lambda^{e,2}
         \cap \HH_n^{3,1} \right),
\end{split}
\end{align}
corresponding to the set of the interior points, the set of the points on faces,
the set of points on two type of edges, and the set of vertices, respectively. More precisely, these sets are given explicitly by
\begin{align*}
&\Lambda_n^{\circ} = \left\{ \kb\in \HH:  k_4<k_3<k_2<k_1<k_4+4n \right\},\\
&\Lambda_n^{f}  = \big\{ \kb\in \HH:\   k_4<k_3<k_2<k_1=k_4+4n \, \text{ or } \,  k_4<k_3<k_2=k_1<k_4+4n \\
       &\qquad \qquad  \text{ or }  k_4<k_3=k_2<k_1<k_4+4n   \, \text{ or }  \, k_4=k_3<k_2<k_1<k_4+4n  \big\},\\
&\Lambda_n^{e,1}   = \left\{ (2k,2k,-2k,-2k), (2k+n, n-2k, n-2k, 2k-3n):\  0<k<n\right\},\\
&\Lambda_n^{e,2}   = \big\{ (k,k,k,-3k), (3k,-k,-k,-k), (n+k, n+k, n-3k, k-3n),\\
       &\qquad\qquad\qquad \qquad \qquad   (3n-k, 3k-n, -n-k, -n-k):\  0<k<n\big\},\\
&\Lambda_n^{v}
= \left\{(0,0,0,0),\ (2n,2n-2n,-2n),\ (3n,-n,-n,-n),\ (n,n,n,-3n)\right\}.
\end{align*}

We denote by $\CTC_n$ and $\CTS_n$ the spaces of the
trigonometric polynomials
\begin{align*}
\CTC_n:=\mathrm{span}\left\{\TC_{\kb}:\ \kb\in \Lambda_n  \right\}, \qquad
\CTS_n:=\mathrm{span}\left\{\TS_{\kb}:\ \kb\in \Lambda_n^{\circ} \right\},
\end{align*}
respectively. We define a discrete inner product $\langle \cdot,\cdot
\rangle_{\triangle,n}$ by
\begin{align*}
    \langle f,g \rangle_{\triangle,n} = \frac{1}{4n^3} \sum_{\jb\in \Lambda_n}
    \lambda_{\jb}^{(n)} f(\frac{\jb}{4n}) \overline{g(\frac{\jb}{4n})},
\end{align*}
where
\begin{align}  \lambda_{ \jb}^{(n)} := \begin{cases}
          24, & \jb\in\Lambda^{\circ}_n,\\
          12, & \jb\in\Lambda^{f}_n,\\
           6,  &  \jb\in\Lambda^{e,1}_n,\\
           4,  &  \jb\in\Lambda^{e,2}_n,\\
           1,  &  \jb\in\Lambda^{v}_n. \end{cases}
\end{align}

\begin{thm} \label{thm:4.4}
For $f \bar{g} \in \CTC_{2n-1}$,
\begin{align}  \label{eq:orth-Tn}
   \la f, g \ra_{\triangle_H} =  \la f, g \ra_{\triangle,n}.
\end{align}
Moreover, the following cubature formula is exact for all $f\in \CTC_{2n-1}$,
\begin{align} \label{eq:cubature-T}
   \frac{1}{|\triangle_H|} \int_{\triangle_H}f(\tb) d\tb =
       \frac{1}{4n^3} \sum_{\jb\in \Lambda_n} \lambda_{\jb}^{(n)} f(\tfrac{\jb}{4n}).
\end{align}
In particular,
\begin{align}  \label{eq:orth-TCn}
   \langle \TC_{\kb},\TC_{\jb} \rangle_{\triangle,n} =
    \frac{ \delta_{\kb,\jb}}{\lambda_{\kb}^{(n)}}, \quad \kb,\jb \in \Lambda_n.
\end{align}
\end{thm}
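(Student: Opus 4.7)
The plan is to transport the discrete orthogonality from the dodecahedron to the tetrahedron by exploiting $\G$-invariance. The engine is a pair of rearrangement identities. Since $\triangle_H$ is a fundamental domain for $\G$ acting on $\Omega_H$ with $|\Omega_H|=24|\triangle_H|$, any $\G$-invariant $h$ on $\RR_H^4$ satisfies
$$
   \frac{1}{|\Omega_H|} \int_{\Omega_H} h(\tb)\, d\tb = \frac{1}{|\triangle_H|} \int_{\triangle_H} h(\tb)\, d\tb.
$$
Moreover, $\HH_n^*$ decomposes into $\G$-orbits, each with a unique representative in $\Lambda_n$. The weight $c_\mb^{(n)}$ is constant on each orbit, because it depends only on the type $(i,j)$ of the boundary piece $B^{i,j}$ containing $\mb/4n$, and $\G$ permutes the pieces $B_{I,J}$ within each $B^{i,j}$. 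Hence the orbit sum of $c$ equals $|\kb\G|\,c_\kb^{(n)}$, and a case-by-case check against Definition~\ref{defn:Sym-ipd} (one case per row $\Lambda_n^\circ,\Lambda_n^f,\Lambda_n^{e,1},\Lambda_n^{e,2},\Lambda_n^v$) identifies this with $\lambda_\kb^{(n)}$. Consequently, for any $\G$-invariant $h$,
$$
   \frac{1}{4n^3} \sum_{\jb \in \HH_n^*} c_\jb^{(n)}\, h(\tfrac{\jb}{4n}) = \frac{1}{4n^3} \sum_{\kb \in \Lambda_n} \lambda_\kb^{(n)}\, h(\tfrac{\kb}{4n}).
$$

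With these two identities, \eqref{eq:orth-Tn} is nearly immediate: as $f\bar g\in \CTC_{2n-1}\subset \CT_{2n-1}$ is $\G$-invariant, the dodecahedral cubature Theorem~\ref{th:cubature-H} gives $\la f,g\ra = \la f,g\ra_n^*$, and the two rearrangements convert the integral into $\la f,g\ra_{\triangle_H}$ and the sum into $\la f,g\ra_{\triangle,n}$. The cubature \eqref{eq:cubature-T} then follows by specializing $g\equiv 1$, so that $f\in \CTC_{2n-1}$ is itself $\G$-invariant.

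For \eqref{eq:orth-TCn} one cannot simply apply \eqref{eq:orth-Tn} to $f=\TC_\kb,g=\TC_\jb$, because $\TC_\kb\overline{\TC_\jb}$ need not belong to $\CTC_{2n-1}$ when $\kb$ or $\jb$ lies on the boundary of $\triangle_H$ (its frequency support can overflow $\HH_{2n-1}^*$). Instead I would apply the discrete rearrangement directly to rewrite $\la \TC_\kb,\TC_\jb\ra_{\triangle,n}=\la \TC_\kb,\TC_\jb\ra_n^*$, expand $\TC_\kb=\tfrac{1}{|\kb\G|}\sum_{\mb\in\kb\G}\phi_\mb$, and invoke the pointwise identity $\la \phi_\mb,\phi_{\mb'}\ra_n^*=1$ iff $\mb\equiv \mb'\pmod{4n\ZZ_H^4}$, which is \eqref{k=jmod} from the proof of Theorem~\ref{interp-H}. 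This reduces the claim to the count
$$
   \la \TC_\kb,\TC_\jb\ra_n^* = \frac{\#\{(\mb,\mb')\in \kb\G\times\jb\G\ :\ \mb\equiv \mb'\!\!\!\pmod{4n\ZZ_H^4}\}}{|\kb\G|\,|\jb\G|}.
$$
Lemma~\ref{lem:s=equiv=t} forces the count to vanish when $\kb\ne\jb$, while for $\kb=\jb$ one must show it equals $|\kb\G|/c_\kb^{(n)}$, yielding $1/\lambda_\kb^{(n)}$.

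The main obstacle is this last counting step at boundary orbits: two distinct orbit elements can be congruent modulo $4n\ZZ_H^4$, so one reads off the identifications from the explicit description of $[B_{I,J}]$ given by Lemma~\ref{congruence}. As a sanity check, at $\kb=(2n,2n,-2n,-2n)\in \Lambda_n^v$ all six orbit elements are pairwise congruent modulo $4n\ZZ_H^4$, producing a count of $36=|\kb\G|/c_\kb^{(n)}$ with $c_\kb^{(n)}=\tfrac16$, in agreement with $\lambda_\kb^{(n)}=1$.
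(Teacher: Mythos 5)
Your proposal is correct and follows essentially the same route as the paper: the orbit-rearrangement identity $\sum_{\jb\in\HH_n^*}c_\jb^{(n)}h(\tfrac{\jb}{4n})=\sum_{\kb\in\Lambda_n}\lambda_\kb^{(n)}h(\tfrac{\kb}{4n})$ for invariant $h$ is exactly the paper's key computation \eqref{eq:T2H}, and your treatment of \eqref{eq:orth-TCn} via \eqref{k=jmod} and a congruence count over orbits matches the paper's argument (which merely collapses one of the two orbit averages, replacing $\TC_\kb$ by $\phi_\kb$ before counting $\{\sigma:\jb\sigma\equiv\kb\}$). Your observation that $\TC_\kb\overline{\TC_\jb}$ can fall outside $\CTC_{2n-1}$, so that \eqref{eq:orth-TCn} cannot be read off from \eqref{eq:orth-Tn}, is a correct and worthwhile point that the paper leaves implicit.
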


\begin{proof}
We shall deduce the result from Theorem \ref{th:cubature-H}. Let $f$ be
a function invariant under $\G$. Recall the coefficients $c_\jb^{(n)}$ in the
symmetric inner product defined in Definition \ref{defn:Sym-ipd}.
Taking into consideration of the orbits of the points in various regions, we obtain
\begin{align*}
\sum_{\jb\in \HH_n^{\circ}} c_{\jb}^{(n)} f(\tfrac{\jb}{4n}) =
   \sum_{\jb\in \HH_n^{\circ}} f(\tfrac{\jb}{4n})
& = 24  \sum_{\jb\in \Lambda^{\circ}\cap \HH_n^{\circ} }  f(\tfrac{\jb}{4n})
   +12  \sum_{\jb\in \Lambda^{f}\cap \HH_n^{\circ}}  f(\tfrac{\jb}{4n}) \\
&  +6   \sum_{\jb\in \Lambda^{e,1}\cap \HH_n^{\circ}}f(\tfrac{\jb}{4n})
   +4   \sum_{\jb\in \Lambda^{e,2}\cap \HH_n^{\circ}}f(\tfrac{\jb}{4n})
   +   f(0),
\end{align*}
and, using the values of $c_\jb^{(n)}$,
\begin{align*}
&  \sum_{\jb\in\HH_n^{*} \setminus \HH_n^{\circ}} c_{\jb}^{(n)}f(\tfrac{\jb}{4n}) =
 24 \sum_{\jb\in \Lambda^{\circ}\cap \HH_n^{1,1} }  c_{\jb}^{(n)}f(\tfrac{\jb}{4n})
   + 12 \sum_{\jb\in \Lambda^{f}\cap \HH_n^{1,1}}  c_{\jb}^{(n)}f(\tfrac{\jb}{4n}) \\
& \qquad\qquad \qquad\qquad
+12  \sum_{\jb\in (\Lambda^{f}\cap \HH_n^{1,2})\cup
      (\Lambda^{f}\cap \HH_n^{2,1})} c_{\jb}^{(n)}f(\tfrac{\jb}{4n})
    + 6   \sum_{\jb\in \Lambda^{e,1}\cap \HH_n^{2,2}}  c_{\jb}^{(n)}f
(\tfrac{\jb}{4n}) \\
&  \qquad\qquad\qquad\qquad
  + 4   \sum_{\jb\in (\Lambda^{e,2}\cap \HH_n^{1,3})\cup (\Lambda^{e,2}
    \cap \HH_n^{3,1})} c_{\jb}^{(n)}f(\tfrac{\jb}{4n})\\
 &\qquad\qquad
  =  12  \sum_{\jb\in \Lambda^{\circ}\cap \HH_n^{1,1} }f(\tfrac{\jb}{4n})
   + 6  \sum_{\jb\in \Lambda^{f}\cap \HH_n^{1,1}}f(\tfrac{\jb}{4n})
 + 4  \sum_{\jb\in(\Lambda^{f}\cap \HH_n^{1,2})\cup
      (\Lambda^{f}\cap \HH_n^{2,1})}    f(\tfrac{\jb}{4n}) \\
&\qquad\qquad\qquad\qquad
   +    \sum_{\jb\in \Lambda^{e,1}\cap \HH_n^{2,2}}  f(\tfrac{\jb}{4n})
   +   \sum_{\jb\in (\Lambda^{e,2}\cap \HH_n^{1,3})\cup (\Lambda^{e,2}
         \cap \HH_n^{3,1})} f(\tfrac{\jb}{4n}).
\end{align*}
Adding these two expressions together and use \eqref{eq:Lambda-n2},
we conclude that
\begin{align} \label{eq:T2H}
     & \sum_{\jb\in \HH_n^*} c_{\jb}^{(n)}f(\tfrac{\jb}{4n})
  = \sum_{\jb\in \HH_n^{\circ}} f(\tfrac{\jb}{4n}) +
  \sum_{\jb\in\HH_n^{*} \setminus \HH_n^{\circ}} c_{\jb}^{(n)}f(\tfrac{\jb}{4n})
    = 24  \sum_{\jb\in \Lambda_n^{\circ} } f(\tfrac{\jb}{4n})   \\
 &\qquad\quad   +12  \sum_{\jb\in \Lambda_n^{f}} f(\tfrac{\jb}{4n})
   +6    \sum_{\jb\in \Lambda_n^{e,1}} f(\tfrac{\jb}{4n})
   +4    \sum_{\jb\in \Lambda_n^{e,2}} f(\tfrac{\jb}{4n})
   +      \sum_{\jb\in \Lambda_n^{v}}  f(\tfrac{\jb}{4n}) \notag \\
   & = \sum_{\jb\in \Lambda_n} \lambda_{\jb}^{(n)}f(\tfrac{\jb}{4n}). \notag
\end{align}

Replacing $f$ by $f\bar g$, we have proved that $\la f, g \ra_n^* =
\la f, g \ra_{\triangle,n}$ whenever $f \bar g$ is invariant. Hence,
\eqref{eq:orth-Tn} follows from Theorem \ref{ipdH}.
Furthermore, since  $\frac{1}{|\Omega_H|}\int_{\Omega_H} f(\tb) d\tb =
\frac{1}{|\triangle|_H}\int_{\triangle_H} f(\tb) d\tb$ for all invariant $f$,
\eqref{eq:cubature-T} follows from Theorem \ref{th:cubature-H}.

Furthermore, replacing $f$ by $\TC_{\kb}\overline{\TC_{\jb}}$ in  \eqref{eq:T2H},
we derive by \eqref{eq:TCs} that
\begin{align*}
   &\langle \TC_{\kb},\TC_{\jb} \rangle_{\triangle,n}
    = \frac{1}{4n^3}\sum_{\lb\in \HH_n^*}c_{\lb}^{(n)}\TC_{\kb}(\tfrac{\lb}{4n})\overline{\TC_{\jb}(\tfrac{\lb}{4n})}
\\ & = \frac{1}{4n^3}\frac{1}{24}\sum_{\lb\in \HH_n^*}c_{\lb}^{(n)}\sum_{\sigma\in \G}\phi_{\kb}(\tfrac{\lb \sigma}{4n}) \overline{\TC_{\jb}(\tfrac{\lb}{4n})}
     = \frac{1}{4n^3}\frac{1}{24}\sum_{\lb\in \HH_n^*}c_{\lb}^{(n)}\sum_{\sigma\in \G}\phi_{\kb}(\tfrac{\lb \sigma}{4n}) \overline{\TC_{\jb}(\tfrac{\lb\sigma}{4n})}
\\ & = \frac{1}{4n^3}\frac{|\G|}{24}\sum_{\lb\in \HH_n^*}c_{\lb}^{(n)}\phi_{\kb}(\tfrac{\lb}{4n}) \overline{\TC_{\jb}(\tfrac{\lb}{4n})}
     = \langle \phi_{\kb}, \TC_{\jb} \rangle_{n}^* = \frac1{24}\sum_{\sigma\in \G} \langle \phi_{\kb}, \phi_{\jb\sigma} \rangle_{n}^*.
\end{align*}
Using \eqref{k=jmod} and abbreviating $\kb \equiv \jb \mod 4 \ZZ_\HH^4$ as
$\kb \equiv \jb$, we further deduce that
\begin{align*}
 &\langle \TC_{\kb},\TC_{\jb} \rangle_{\triangle,n}
 = \frac1{24} \Big |\left \{\sigma\in \G:  \jb \sigma \equiv \kb \right\}\Big|
  =\frac{\delta_{\jb,\kb}}{24} \Big| \left\{\sigma\in \G:  \kb \sigma \equiv \kb \right\}\Big|
    = \frac{\delta_{\jb,\kb}}{\lambda^{(n)}_{\kb}},
\end{align*}
where the last equality follows from a direct counting.
\end{proof}

The proof of the above theorem also applies to $f, g \in \CTS_n$ since
$f \bar g $ is invariant if both $f$ and $g$ are anti-invariant. Moreover,
$f \bar g \in \CT_{2n-1}$ if $f, g \in \CT_n$. Notice also that
$\TS_{\kb}(\frac{\jb}{4n}) =0$ when $\jb \in  \Lambda_n\setminus
\Lambda_n^{\circ}$, we deduce the following result.

\begin{thm}
Let the discrete inner product $\langle \cdot, \cdot \rangle_{\triangle^{\circ},n}$
be defined by
\begin{align*}
\langle f, g \rangle_{\triangle^{\circ},n} = \frac{6}{n^3} \sum_{\jb \in \Lambda_n^{\circ}}
f(\tfrac{\jb}{4n}) \overline{g(\tfrac{\jb}{4n})}.
\end{align*}
Then
\begin{align*}
\langle f,g \rangle_{\triangle^{\circ},n} =  \langle f,g \rangle_{\triangle_H},\qquad
                f, g \in \CTS_{n}.
\end{align*}
\end{thm}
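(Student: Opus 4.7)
The plan is to reduce this identity to Theorem~\ref{thm:4.4} by observing that every anti-invariant function vanishes on all of $\partial\triangle_H$, so that the weighted sum over $\Lambda_n$ appearing in the tetrahedral cubature collapses to an unweighted sum over the interior points $\Lambda_n^\circ$.

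First I would check that for $f, g \in \CTS_n$ the product $f\overline{g}$ is $\G$-invariant (the two sign characters cancel) and lies in $\CTC_{2n-1}$, so that Theorem~\ref{thm:4.4} applies. Invariance is immediate. For the polynomial degree, it suffices to show $\TS_\kb \overline{\TS_\lb} \in \CT_{2n-1}$ for $\kb, \lb \in \Lambda_n^\circ$. Expanding each generalized sine as a combination of $\phi_{\kb\sigma}$, the product is a combination of $\phi_{\kb\sigma - \lb\tau}$; since the pairwise coordinate differences of any vector in $\Lambda_n^\circ$ are nonzero multiples of $4$ bounded in absolute value by $4n-4$, those of $\kb\sigma - \lb\tau$ are bounded by $8n - 8 \le 4(2n-1)$, placing $\kb\sigma - \lb\tau$ in $\HH_{2n-1}^*$. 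Theorem~\ref{thm:4.4} then yields
\begin{align*}
   \la f, g \ra_{\triangle_H} = \frac{1}{4n^3} \sum_{\jb \in \Lambda_n} \lambda_\jb^{(n)} f(\tfrac{\jb}{4n}) \overline{g(\tfrac{\jb}{4n})}.
\end{align*}

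The main step is to show that $f(\tfrac{\jb}{4n}) = 0$ for every $\jb \in \Lambda_n \setminus \Lambda_n^\circ$. From the explicit description, such a $\jb$ either has $j_i = j_{i+1}$ for some $i \in \{1,2,3\}$, or satisfies $j_1 - j_4 = 4n$. In the first case $\tfrac{\jb}{4n}$ is fixed by the transposition $\sigma_{i,i+1} \in \G^-$, and anti-invariance forces $f(\tfrac{\jb}{4n}) = -f(\tfrac{\jb}{4n}) = 0$. The main (if mild) obstacle is the outer wall $t_1 - t_4 = 1$, which is not a reflection wall of the finite group $\G = S_4$ and so cannot be handled by anti-invariance alone. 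I would dispose of it using the affine reflection $s_0(\tb) := \sigma_{14}(\tb) + \eb_{1,4}$: a direct calculation shows $s_0$ fixes this wall pointwise, and since $\eb_{1,4} \in \ZZ_H^4$, the $H$-periodicity of $f$ combined with anti-invariance under $\sigma_{14} \in \G^-$ gives
\begin{align*}
   f(\tb) = f\bigl(\sigma_{14}(\tb) + \eb_{1,4}\bigr) = f(\sigma_{14}(\tb)) = -f(\tb),
\end{align*}
hence $f(\tb) = 0$ on this wall as well.

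Finally, with the sum collapsed to $\jb \in \Lambda_n^\circ$, where uniformly $\lambda_\jb^{(n)} = 24$, the identity becomes
\begin{align*}
   \la f, g \ra_{\triangle_H} = \frac{24}{4n^3} \sum_{\jb \in \Lambda_n^\circ} f(\tfrac{\jb}{4n}) \overline{g(\tfrac{\jb}{4n})} = \frac{6}{n^3} \sum_{\jb \in \Lambda_n^\circ} f(\tfrac{\jb}{4n}) \overline{g(\tfrac{\jb}{4n})} = \la f, g \ra_{\triangle^\circ, n},
\end{align*}
which is precisely the claim.
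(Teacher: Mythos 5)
Your proposal is correct and follows essentially the same route as the paper: apply the tetrahedral cubature of Theorem~\ref{thm:4.4} to the invariant product $f\bar g\in\CTC_{2n-1}$ and observe that anti-invariant functions vanish at all nodes of $\Lambda_n\setminus\Lambda_n^{\circ}$, collapsing the weighted sum to $24$ times the unweighted interior sum. The paper merely asserts the vanishing $\TS_{\kb}(\tfrac{\jb}{4n})=0$ for $\jb\in\Lambda_n\setminus\Lambda_n^{\circ}$, whereas you supply the details --- transpositions in $\G^-$ for the walls $j_i=j_{i+1}$ and the affine reflection $s_0(\tb)=\tb\sigma_{14}+\eb_{1,4}$ combined with $H$-periodicity for the outer wall $j_1-j_4=4n$ --- together with an explicit check that $f\bar g\in\CT_{2n-1}$; both details are sound.
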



\subsection{Interpolation on the tetrahedron}

We can deduce results on interpolation on the tetrahedron by making use of the
orthogonality of generalized trigonometric functions with respect to the discrete
inner product, as shown in our first result below. Recall the operator
$\mathcal{P}^{\pm}$ defined in \eqref{eq:Ppm}.

\begin{thm} \label{thm:4.6}
For $n>0$, and $f\in C(\Delta_H)$, define
\begin{align*}
   \mathcal{L}_nf(\tb):= \sum_{\jb\in \Lambda_n^{\circ}}
    f(\tfrac{\jb}{4n}) \ell_{\jb,n}^{\circ}(\tb), \qquad
  \ell_{\jb,n}^{\circ} (\tb):= \frac{144}{n^3} \sum_{\kb\in \Lambda_n^{\circ}}
   \TS_{\kb}(\tb)    \overline{\TS_{\kb}(\tfrac{\jb}{4n})}.
\end{align*}
Then $\mathcal{L}_n f$ is the unique function in $\CTS_n$ that satisfies
\begin{align*}
\mathcal{L}_nf( \tfrac{\jb}{4n}) =f( \tfrac{\jb}{4n}), \qquad \jb\in \Lambda_n^{\circ}.
\end{align*}
Furthermore, the fundamental interpolation function $\ell_{\jb,n}^{\circ}$ is real and satisfies
\begin{align*}
   \ell_{\jb,n}^{\circ} (\tb) = \frac{6}{n^3} \mathcal{P}^{-}_{\tb} \left[
   \Theta_{n}(\tb-\tfrac{\jb}{4n}) - \Theta_{n-1}(\tb-\tfrac{\jb}{4n})
   \right],
\end{align*}
where $\mathcal{P}^{-}_{\tb}$ means that the operator $\mathcal{P}^{-}$ is
acting on the variable $\tb$ and $\Theta_{n}$ is defined in \eqref{DnTheta-n}.
\end{thm}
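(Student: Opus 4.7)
The plan breaks into three steps: verify the interpolation identity, deduce uniqueness, and then derive the compact formula via the Dirichlet kernel.

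\emph{Step 1 (Interpolation via reproducing kernel).} I would first rewrite $\ell_{\jb,n}^\circ(\tb) = \frac{144}{n^3} K_n(\tb,\jb/4n)$, where
$K_n(\tb,\sb) := \sum_{\kb\in\Lambda_n^\circ}\TS_{\kb}(\tb)\overline{\TS_{\kb}(\sb)}$.
Since $\{\TS_{\kb}:\kb\in\Lambda_n^\circ\}$ is a basis of $\CTS_n$, and the orthogonality \eqref{eq:orth-TS} together with Theorem~4.5 gives
$\la \TS_{\kb},\TS_{\jb}\ra_{\triangle^\circ,n}=\la \TS_\kb,\TS_\jb\ra_{\triangle_H} = \frac{1}{24}\delta_{\kb,\jb}$ for $\kb,\jb\in\Lambda_n^\circ$, any $g\in\CTS_n$ expands as $g=\sum c_{\kb}\TS_{\kb}$ with $c_{\kb}=24\la g,\TS_{\kb}\ra_{\triangle^\circ,n}=\frac{144}{n^3}\sum_{\jb\in\Lambda_n^\circ}g(\jb/4n)\overline{\TS_{\kb}(\jb/4n)}$. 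Substituting back and interchanging the sums yields $g(\tb)=\sum_{\jb\in\Lambda_n^\circ}g(\jb/4n)\,\ell_{\jb,n}^\circ(\tb)$, so $\mathcal{L}_n g=g$ on $\CTS_n$. Since $\ell_{\kb,n}^\circ\in\CTS_n$, taking $g=\ell_{\kb,n}^\circ$ gives $\ell_{\jb,n}^\circ(\kb/4n)=\delta_{\jb,\kb}$, proving the interpolation property.

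\emph{Step 2 (Uniqueness).} Because $\dim\CTS_n = |\Lambda_n^\circ|$ equals the number of interpolation nodes, and the evaluation map $g\mapsto (g(\jb/4n))_{\jb\in\Lambda_n^\circ}$ on $\CTS_n$ is surjective by Step~1, it is an isomorphism; uniqueness follows.

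\emph{Step 3 (Compact formula).} The key is to fold the sum $K_n(\tb,\jb/4n)$ back into a sum over the symmetric index set $\HH_{n-1}^*$. I would use the identities $\TS_{\kb\sigma}(\tb)=\rho(\sigma)\TS_{\kb}(\tb)$ and $\TS_{-\kb}(\sb)=\overline{\TS_{\kb}(\sb)}$ together with the observation that $\Lambda_n^\circ = \Lambda^\circ\cap\HH_{n-1}^*$ (since $k_1-k_4<4n$ and $k_1-k_4\equiv 0\pmod 4$ forces $k_1-k_4\le 4(n-1)$). Since $\TS_{\kb}\equiv 0$ whenever two entries of $\kb$ coincide, partitioning $\HH_{n-1}^*$ into $\G$-orbits gives
\begin{align*}
\sum_{\kb\in\HH_{n-1}^*}e^{-\frac{\pi i}{8n}\kb\cdot\jb}\TS_{\kb}(\tb)
&=\sum_{\kb_0\in\Lambda_n^\circ}\TS_{\kb_0}(\tb)\sum_{\sigma\in\G}\rho(\sigma)\,e^{-\frac{\pi i}{2}\kb_0\cdot(\jb/4n)\sigma^{-1}}\\
&=-24\sum_{\kb_0\in\Lambda_n^\circ}\TS_{\kb_0}(\tb)\,\overline{\TS_{\kb_0}(\jb/4n)},
\end{align*}
using $\frac{1}{24}\sum_\sigma\rho(\sigma)\phi_{\kb_0}(\sb\sigma)=-\TS_{\kb_0}(\sb)$ and $\TS_{-\kb_0}=\overline{\TS_{\kb_0}}$. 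On the other hand, since $\phi_{\kb}(\tb-\jb/4n)=e^{-\frac{\pi i}{8n}\kb\cdot\jb}\phi_{\kb}(\tb)$, applying $\mathcal{P}^-_{\tb}$ term by term to $D_{n-1}^H(\tb-\jb/4n)=\sum_{\kb\in\HH_{n-1}^*}\phi_{\kb}(\tb-\jb/4n)$ produces exactly $-\sum_{\kb\in\HH_{n-1}^*}e^{-\frac{\pi i}{8n}\kb\cdot\jb}\TS_{\kb}(\tb)$. Combining and inserting the factor $\frac{6}{n^3}$, together with $D_{n-1}^H=\Theta_n-\Theta_{n-1}$ from \eqref{DnTheta-n}, yields the stated compact formula. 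Realness follows because $\HH_{n-1}^*$ is invariant under $\kb\mapsto -\kb$, so $D_{n-1}^H$ is real-valued, and $\mathcal{P}^-_{\tb}$ has real coefficients.

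The main obstacle is the bookkeeping in Step~3: carefully tracking the signs $\rho(\sigma)$ under the group action and identifying the index set after orbit reduction. Once the identities $\TS_{\kb\sigma}=\rho(\sigma)\TS_{\kb}$ and $\TS_{-\kb}=\overline{\TS_{\kb}}$ are in hand, the algebra collapses cleanly.
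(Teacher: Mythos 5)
Your proof is correct, and its treatment of the compact formula is a genuine (if modest) variation on the paper's. For the interpolation property both arguments ultimately rest on the discrete orthogonality $\la \TS_{\kb},\TS_{\jb}\ra_{\triangle^{\circ},n}=\tfrac1{24}\delta_{\kb,\jb}$; the paper gets $\ell^{\circ}_{\jb,n}(\tfrac{\kb}{4n})=\delta_{\jb,\kb}$ in one line from it (via the symmetry $\TS_{\kb}(\tfrac{\jb}{4n})=\TS_{\jb}(\tfrac{\kb}{4n})$), while you pass through the reproducing identity $\mathcal{L}_n g=g$ on $\CTS_n$. Be aware that extracting $\ell^{\circ}_{\jb,n}(\tfrac{\kb}{4n})=\delta_{\jb,\kb}$ from that identity already requires the linear independence of the evaluation vectors $(\TS_{\kb}(\tfrac{\jb}{4n}))_{\jb}$, i.e.\ the surjectivity you only assert in Step~2; this is not a gap, since that independence is immediate from the same discrete orthogonality, but it should be stated before you take $g=\ell^{\circ}_{\kb,n}$. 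For the compact formula the paper writes the kernel as $\CP^{-}_{\tb}\CP^{-}_{\jb}$ applied to $\sum_{\kb\in\HH_n^{\circ}}\phi_{\kb}(\tb)\overline{\phi_{\kb}(\tfrac{\jb}{4n})}$, enlarges the index set to $\HH_n^{*}$ using the vanishing of the boundary $\TS_{\kb}$'s, and then collapses the double projection with the lemma $\CP^{-}_{\tb}\CP^{-}_{\sb}f(\tb-\sb)=\CP^{-}_{\tb}f(\tb-\sb)$ for invariant $f$; you instead apply the single projection $\CP^{-}_{\tb}$ to $D_{n-1}^{H}(\tb-\tfrac{\jb}{4n})$ and fold the resulting sum over $\HH_{n-1}^{*}$ into $\G$-orbits by hand, using $\TS_{\kb\sigma}=\rho(\sigma)\TS_{\kb}$ and $\TS_{-\kb}=\overline{\TS_{\kb}}$. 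The two computations are equivalent, but your observation $\HH_n^{\circ}=\HH_{n-1}^{*}$ lands directly on $D_{n-1}^{H}=\Theta_{n}-\Theta_{n-1}$, exactly as stated in the theorem, and avoids both the boundary-vanishing step and the double-projection lemma; the paper's proof, by contrast, ends with $D_n^{H}$ and leaves the reader to reconcile this with the stated $\Theta_{n}-\Theta_{n-1}$ (the two agree only after the projection is applied). What your route costs is the explicit orbit bookkeeping, which you have carried out correctly.
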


\begin{proof}
By \eqref{eq:orth-TS}, $\langle \TS_{\jb},\TS_{\kb} \rangle_{\triangle^{\circ},n} =
\frac{1}{24} \delta_{\jb,\kb}$, which shows that $\ell_{\jb,n}^{\circ}(\frac{\kb}{4n})
= \delta_{\jb,\kb}$ and verifies the interpolation condition.
It follows from the definition of $\TS_{\kb}$ that
\begin{align*}
\ell_{\jb,n}^{\circ} (\tb) = \frac{6}{n^3}\mathcal{P}^{-}_{\tb}\mathcal{P}^{-}_{\jb}
\sum_{\kb\in \HH_n^{\circ}} \phi_{\kb}(\tb)\overline{\phi_{\kb}(\tfrac{\jb}{4n})}.
\end{align*}
Furthermore, we can replace the summation over $\kb \in \HH_n^\circ$ by
the summation over $\kb \in \HH_n$ since $\TS_{\kb}(\tfrac{\jb}{4n})=
\TS_{\jb}(\tfrac{\kb}{4n}) = 0$ whenever
$\kb\in \Lambda_n\setminus \Lambda_n^{\circ}$, and $\TS_{\kb}(\tb)=0$ whenever
at least two components of $\kb$ are equal. Consequently,  we conclude that
\begin{align*}
\ell_{\jb,n}^{\circ} (\tb) = \frac{6}{n^3}\mathcal{P}^{-}_{\tb}\mathcal{P}^{-}_{\jb}
 D_n^H(\tb-\tfrac{\jb}{4n}),
 \end{align*}
where $D_n^H$ is the Dirichlet kernel for the rhombic dodecahedral Fourier partial
sum defined in \eqref{def:Dn^H}. Recall that $\mathcal{G}$ is a permutation group
and $|\sigma|$ denote the number of inversions in $\sigma \in \G$.  Let $f$ be an
invariant function under $\G$. Then
 \begin{align*}
 \mathcal{P}^{-}_{\tb}\mathcal{P}^{-}_{\sb} f(\tb-\sb)
& =  \frac{1}{|\mathcal{G}|^2}\sum_{\sigma \in \mathcal{G}}
      \sum_{\tau \in \mathcal{G}} (-1)^{|\sigma|+| \tau|}
        f(\tb \sigma  -  \sb \tau) \\
& = \frac{1}{|\mathcal{G}|^2}\sum_{\tau \in \mathcal{G}}\sum_{\sigma \in \mathcal{G}}
   (-1)^{|\sigma|+ |\tau|}  f( \tb \sigma \tau^{-1} -  \sb ) \\
& =\frac{1}{|\mathcal{G}|}\sum_{\sigma \in \mathcal{G}} (-1)^{|\sigma|}
   f( \tb \sigma -  \sb ) = \mathcal{P}^{-}_{\tb} f( \tb -  \sb ),
 \end{align*}
where in the third equal sign we have used the fact that $|\sigma \tau| + |\tau|
= |\sigma|$, which can be easily verified. Setting $f = D_n^H$ completes the
proof.
 \end{proof}

The function $\CL_n f$ interpolates at the interior points of $\Lambda_n$. We can
also derive an analog result for interpolation on $\Lambda_n$ by using the same
approach. However, it is more illustrating to derive it from the interpolation on the
rhombic dodecahedron, which we carry out below.

\begin{thm} \label{thm:4.7}
For $n>0$ and $f\in C(\Delta_H)$ define
\begin{align*}
  \mathcal{L}^*_n f(\tb):= \sum_{\jb\in \Lambda_n} f(\tfrac{\jb}{4n})
     \ell^{\triangle}_{\jb,n}(\tb),
 \qquad  \ell^{\triangle}_{\jb,n}(\tb) := \frac{\lambda_{\jb}^{(n)}}{4n^3}
 \sum_{\kb\in \Lambda_n}\lambda^{(n)}_{\kb} \TC_{\kb}(\tb)
       \overline{\TC_{\kb}(\tfrac{\jb}{4n})}.
\end{align*}
Then $\mathcal{L}^*_n f$ is the unique function in $\CTC_n$ that satisfies
\begin{align*}
\mathcal{L}^*_n f(\tfrac{\jb}{4n}) = f(\tfrac{\jb}{4n}), \qquad \jb\in \Lambda_n.
\end{align*}
Furthermore, the fundamental interpolation function $\ell^{\triangle}_{\jb,n}$
is given by
\begin{align*}
\ell^{\triangle}_{\jb,n}(\tb) = \lambda_{\jb}^{(n)} \mathcal{P}^{+}
\ell_{\jb,n}(\tb).
\end{align*}
\end{thm}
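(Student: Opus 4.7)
The plan is to prove Theorem \ref{thm:4.7} in two stages: first establish that $\mathcal{L}^*_n f$ is the unique interpolant in $\CTC_n$ at the nodes $\{\jb/4n:\jb\in\Lambda_n\}$, then derive the compact formula by applying $\mathcal{P}^{+}$ to the dodecahedral interpolation function $\ell_{\jb,n}$ and grouping into $\G$-orbits.

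For the interpolation and uniqueness, the discrete orthogonality \eqref{eq:orth-TCn} from Theorem \ref{thm:4.4} shows that $\{\sqrt{\lambda_\kb^{(n)}}\,\TC_\kb\}_{\kb\in\Lambda_n}$ is an orthonormal basis of $\CTC_n$ with respect to $\langle\cdot,\cdot\rangle_{\triangle,n}$, since $|\Lambda_n|=\dim\CTC_n$. Consequently the evaluation map $\CTC_n\to\mathbb{C}^{\Lambda_n}$ is a bijection, giving uniqueness; expanding any candidate $g\in\CTC_n$ in this orthonormal basis, $g=\sum_\kb\lambda_\kb^{(n)}\langle g,\TC_\kb\rangle_{\triangle,n}\,\TC_\kb$, and replacing $\langle g,\cdot\rangle_{\triangle,n}$ by $\langle f,\cdot\rangle_{\triangle,n}$ (valid because $g$ and $f$ agree at all nodes), one recovers exactly $\mathcal{L}^*_n f$ with fundamental function $\ell^{\triangle}_{\jb,n}$ satisfying $\ell^{\triangle}_{\jb,n}(\ib/4n)=\delta_{\ib,\jb}$.

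For the compact formula, first note that $\lambda_\jb^{(n)}\mathcal{P}^{+}\ell_{\jb,n}$ lies in $\CTC_n$: by Theorem \ref{interp-H}, $\ell_{\jb,n}\in\mathcal{T}_n$, and $\mathcal{P}^{+}$ maps $\mathcal{T}_n$ into $\mathrm{span}\{\TC_\kb:\kb\in\HH_n^{*}\}$, which coincides with $\CTC_n$ because $\TC_{\kb\sigma}=\TC_\kb$ and every $\G$-orbit in $\HH_n^{*}$ meets $\Lambda_n$. Writing $\ell_{\jb,n}(\tb)=\frac{1}{4n^{3}}\sum_{\kb\in\HH_n^{*}}c_\kb^{(n)}\overline{\phi_\kb(\jb/4n)}\phi_\kb(\tb)$ and applying $\mathcal{P}^{+}$ under the sum yields $\mathcal{P}^{+}\ell_{\jb,n}(\tb)=\frac{1}{4n^{3}}\sum_{\kb\in\HH_n^{*}}c_\kb^{(n)}\overline{\phi_\kb(\jb/4n)}\,\TC_\kb(\tb)$. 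The strata defining $c_\kb^{(n)}$ are $\G$-invariant, so $c_\kb^{(n)}$ is constant on each $\G$-orbit; grouping the sum by orbits and using the identity $\sum_{\kb\in\lb\G}\overline{\phi_\kb(\jb/4n)}=|\lb\G|\,\overline{\TC_\lb(\jb/4n)}$ (immediate from \eqref{eq:TC-redef}) collapses the expression to $\mathcal{P}^{+}\ell_{\jb,n}(\tb)=\frac{1}{4n^{3}}\sum_{\lb\in\Lambda_n}c_\lb^{(n)}|\lb\G|\,\overline{\TC_\lb(\jb/4n)}\,\TC_\lb(\tb)$. Comparison with the defining formula for $\ell^{\triangle}_{\jb,n}$ reduces the claim to the single identity $c_\lb^{(n)}|\lb\G|=\lambda_\lb^{(n)}$ for every $\lb\in\Lambda_n$.

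The main remaining task, which is bookkeeping rather than conceptual, is this final identity. I would verify it by walking through the five-stratum decomposition of $\Lambda_n$ given in \eqref{eq:Lambda-n2}: each piece $\Lambda_n^{\circ},\Lambda_n^{f},\Lambda_n^{e,1},\Lambda_n^{e,2},\Lambda_n^{v}$ further splits according to whether $\lb\in\HH_n^{\circ}$ or $\lb$ lies on one of the dodecahedral boundary pieces $\HH_n^{i,j}$, fixing both $|\lb\G|$ (read from the coincidence pattern among $l_1,l_2,l_3,l_4$) and $c_\lb^{(n)}$ (from Definition \ref{defn:Sym-ipd}). In each resulting case the product $c_\lb^{(n)}|\lb\G|$ matches the corresponding value of $\lambda_\lb^{(n)}$, completing the proof.
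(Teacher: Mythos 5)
Your proposal is correct and follows essentially the same route as the paper: the interpolation and uniqueness properties come from the discrete orthogonality \eqref{eq:orth-TCn}, and the compact formula comes from the same weight bookkeeping --- your reduction to the identity $c_\lb^{(n)}|\lb\G|=\lambda_\lb^{(n)}$ is precisely the content of \eqref{eq:T2H} established in the proof of Theorem \ref{thm:4.4}, which the paper reuses here. The only organizational difference is the direction of the computation: the paper converts the $\Lambda_n$-sum defining $\ell^{\triangle}_{\jb,n}$ into the $\HH_n^*$-sum and then invokes $\mathcal{P}^{+}_{\tb}\mathcal{P}^{+}_{\sb}f(\tb-\sb)=\mathcal{P}^{+}_{\tb}f(\tb-\sb)$, whereas you start from $\mathcal{P}^{+}\ell_{\jb,n}$ and collapse the $\HH_n^*$-sum by $\G$-orbits, which amounts to the same calculation run in reverse.
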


\begin{proof}
It follows from \eqref{eq:orth-TCn} that
$\ell^{\triangle}_{\jb,n}(\frac{\kb}{4n}) =\delta_{\kb,\jb}$ for
$\kb,\jb\in \Lambda_n$, which verifies the interpolation condition.
Furthermore, in the proof of Theorem \ref{thm:4.4}, we established
that $\sum_{\jb \in \Lambda_n}\lambda_\jb^{(n)} g(\frac{\jb}{4n}) =
 \sum_{j\in \HH_n^*}c_\jb^{(n)} g(\frac{\jb}{4n})$ for function $g$ invariant under $\G$.
 Applying this relation to $g(\frac{\kb}{4n}) = \TC_\kb(\tb) \TC_\kb(\frac{\jb}{4n})$, we obtain
\begin{align*}
\ell_{\jb,n}^{\triangle} (\tb) =& \frac{\lambda_{\jb}^{(n)}}{4n^3}
\sum_{\kb\in \HH_n^*}c^{(n)}_{\kb}
   \TC_{\kb}(\tb)   \overline{ \TC_{\kb}(\tfrac{\jb}{4n})}\\
 = & \frac{\lambda_{\jb}^{(n)}}{4n^3} \mathcal{P}^{+}_{\tb}\mathcal{P}^{+}_{\jb}
\sum_{\kb\in \HH_n^*}
c^{(n)}_{\kb}\phi_{\kb}(\tb)\overline{\phi_{\kb}(\tfrac{\jb}{4n})} =
\lambda_{\jb}^{(n)}\mathcal{P}^{+}_{\tb}\mathcal{P}^{+}_{\jb}
  \Phi^{*}_{n}(\tb-\tfrac{\jb}{4n}).
\end{align*}
Using the fact that $\mathcal{G}$ is a permutation group, it is easy to see that
\begin{align*}
 \mathcal{P}^{+}_{\tb}\mathcal{P}^{+}_{\sb} f(\tb-\sb)
 = \mathcal{P}^{+}_{\tb} f( \tb -  \sb)
 \end{align*}
for an invariant function $f$. Consequently,
\begin{align*}\ell_{\jb,n}^{\triangle} (\tb)=  \lambda_{\jb}^{(n)}
\mathcal{P}^{+}_{\tb} \Phi^{*}_{n}(\tb-\tfrac{\jb}{4n}) =
\lambda_{\jb}^{(n)} \mathcal{P}^{+} \ell_{\jb,n}(\tb).
\end{align*}
The proof is completed.
\end{proof}

Recall the explicit formula of $\ell_{\jb,n}$ given in Theorem \ref{interp-H},
$\ell_{j,n}^\triangle$ enjoys a compact formula.

Let $\|\mathcal{L}_n\|$ and $\|\mathcal{L}_n^*\|$ denote the
operator norms of $\mathcal{L}_n$ and $\mathcal{L}_n^*$,
respectively, both as operators from $C(\triangle_H)\mapsto
C(\triangle_H)$. From Theorems \ref{thm:4.6} and \ref{thm:4.7}, an
immediate application of Theorem \ref{LebesgueH} yields the
following theorem.

\begin{thm}
There is a constant $c$ independent of $n$, such that
\begin{align*}
  \|\mathcal{L}_n\| \leq c (\log n)^3 \quad\hbox{and}\quad
    \|\mathcal{L}_n^*\| \leq c (\log n)^3.
\end{align*}
\end{thm}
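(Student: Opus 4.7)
The plan is to reduce both estimates to the dodecahedral Lebesgue constant bound in Theorem \ref{LebesgueH} by exploiting the compact formulas for the fundamental interpolants from Theorems \ref{thm:4.6} and \ref{thm:4.7},
\[
\ell_{\jb,n}^{\circ}(\tb) = \frac{6}{n^3}\,\mathcal{P}^{-}_{\tb}\!\left[\Theta_n(\tb-\tfrac{\jb}{4n}) - \Theta_{n-1}(\tb-\tfrac{\jb}{4n})\right],
\qquad
\ell_{\jb,n}^{\triangle}(\tb) = \lambda_{\jb}^{(n)}\,\mathcal{P}^{+}\ell_{\jb,n}(\tb).
\]
A standard duality argument first gives
\[
\|\mathcal{L}_n\|_\infty = \max_{\tb\in\triangle_H}\sum_{\jb\in\Lambda_n^{\circ}}|\ell_{\jb,n}^{\circ}(\tb)|,\qquad
\|\mathcal{L}_n^*\|_\infty = \max_{\tb\in\triangle_H}\sum_{\jb\in\Lambda_n}|\ell_{\jb,n}^{\triangle}(\tb)|,
\]
so the task is to bound these Lebesgue sums by $c(\log n)^3$.

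For $\mathcal{L}_n^*$, I would apply the triangle inequality inside $\mathcal{P}^{+}$ to obtain
\(
|\ell_{\jb,n}^{\triangle}(\tb)| \le \frac{\lambda_{\jb}^{(n)}}{24}\sum_{\sigma\in\G}|\Phi_n^*(\tb\sigma - \tfrac{\jb}{4n})|.
\)
Since $\triangle_H\sigma\subset\Omega_H$ for every $\sigma\in\G$, it suffices to bound, uniformly in $\sb\in\Omega_H$, the sum $\sum_{\jb\in\Lambda_n}\lambda_{\jb}^{(n)}|\Phi_n^*(\sb - \tfrac{\jb}{4n})|$. Using the elementary facts $\Lambda_n\subset\HH_n^*$ and $\lambda_{\jb}^{(n)}\le 24$, this is dominated by $24\sum_{\jb\in\HH_n^*}|\Phi_n^*(\sb-\tfrac{\jb}{4n})| = 24\,\|\mathcal{I}_n^*\|_\infty \le c(\log n)^3$ by Theorem \ref{LebesgueH}. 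Summing over the $24$ elements of $\G$ absorbs into the constant and gives $\|\mathcal{L}_n^*\|_\infty \le c(\log n)^3$.

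For $\mathcal{L}_n$, the same symmetrization, together with the identity $\Theta_n - \Theta_{n-1} = D_{n-1}^H$ from Theorem \ref{H-partial}, gives the majorant
\[
|\ell_{\jb,n}^{\circ}(\tb)| \le \frac{1}{4n^3}\sum_{\sigma\in\G}\bigl|D_{n-1}^H(\tb\sigma - \tfrac{\jb}{4n})\bigr|,
\]
reducing the problem to $\frac{1}{4n^3}\sum_{\jb\in\HH_n^*}|D_{n-1}^H(\sb - \tfrac{\jb}{4n})| \le c(\log n)^3$ uniformly in $\sb\in\Omega_H$. This is the discrete analogue of the integral bound used in Theorem \ref{SnNorm} and should be obtained by the same mechanism already employed in the proof of Theorem \ref{LebesgueH}: expand $D_{n-1}^H$ via the product representation \eqref{eq:DnH}, dominate $|D_{n-1}^H|$ by a sum of four products of three univariate kernels $|K_{n-1}(t_i-s_i)|$, enlarge the $\HH_n^*$-index set to a full cubic grid in $[-1,1]^3$, and apply the classical one-variable bound $\frac{1}{n}\sum_{k=-n}^n|K_{n-1}(t-\tfrac{k}{n})| \le c\log n$ separately in each of the three resulting tensor factors.

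The main obstacle is less analytic than bookkeeping: one must verify that the tetrahedral weights $\lambda_{\jb}^{(n)}$ are uniformly controlled and that the inclusion $\Lambda_n\subset\HH_n^*$, together with the $\G$-invariance of $\Phi_n^*$ and $D_n^H$, allow the $\G$-symmetrization produced by $\mathcal{P}^{\pm}$ to be absorbed into a universal dimensional constant. Once that is checked, no new harmonic-analytic input beyond the estimates inside Theorems \ref{SnNorm} and \ref{LebesgueH} (ultimately, the one-dimensional Lebesgue bound for the Dirichlet kernel) is needed.
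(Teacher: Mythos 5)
Your proposal is correct and follows exactly the route the paper intends: the paper gives no detailed argument, stating only that the result follows ``immediately'' from the compact formulas $\ell_{\jb,n}^{\circ}=\frac{6}{n^3}\mathcal{P}^{-}_{\tb}D_{n-1}^H(\cdot-\frac{\jb}{4n})$ and $\ell_{\jb,n}^{\triangle}=\lambda_{\jb}^{(n)}\mathcal{P}^{+}\ell_{\jb,n}$ together with Theorem \ref{LebesgueH}, and your writeup supplies precisely the omitted bookkeeping (triangle inequality over the $24$ group elements, $\Lambda_n\subset\HH_n^*$, $\lambda_{\jb}^{(n)}\le 24$, and the harmless shift from $D_n^H$ to $D_{n-1}^H$). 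No new analytic input is needed beyond what is already in the proofs of Theorems \ref{SnNorm} and \ref{LebesgueH}, so the argument stands.
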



\subsection{Interpolation on the regular tetrahedron}

The results in the above are developed in homogeneous coordinates. Here
we indicate how they can be recast into the usual coordinates on the regular
tetrahedron $\triangle^*$ defined by
$$
    \triangle^*: = \left\{ x\in \RR^3: 0 \le x_3\le x_2\le x_1\le 1  \right\}
$$
as depicted in the Figure 4.3 below.

\begin{figure}[htb]
\centering
\includegraphics[width=0.6\textwidth]{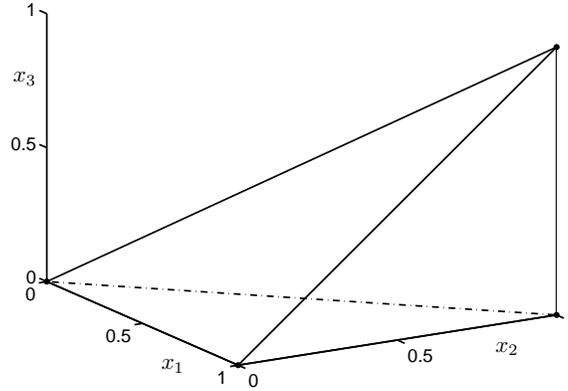}
\caption{Regular tetrahedron}
\end{figure}

The change of variable from $\tb$ to $x \in \RR^3$ is given in \eqref{coordinate}.
When we transform the formulas from the homogeneous coordinates to the
regular coordinates, we also need to transform the indices from $\jb \in \ZZ_H^4$
to $\ZZ^3$ by using
\begin{align} \label{jb-k}
        k = \frac14 A^\tr U^\tr \jb  \quad \Longleftrightarrow \quad
     \begin{cases} k_1=   \tfrac14 (j_1 - j_4)  \\
           k_2 =   \tfrac14 (j_2 - j_4)  \\
           k_3 =   \tfrac14 (j_3 - j_4).
      \end{cases}
\end{align}
Under this change variables, it is easy to see that the point set $\Lambda_n$
becomes $\{k \in \ZZ^3: 0 \le k_3 \le k_2\le k_1 \le n\}$. For example, the
cubature formula in \eqref{eq:cubature-T} becomes the following:

\begin{thm}
For $n>0$, the cubature formula
\begin{align*}
 6\int_{\triangle^*} f(x_1,x_2,x_3)dx_1dx_2dx_3 = \frac{1}{4n^3}
  \sum_{0\leq k_3\leq k_2\leq k_1\leq n}
 \lambda_{k_1,k_2,k_3}^{(n)} f(\tfrac{k_1}{n},\tfrac{k_2}{n},\tfrac{k_3}{n})
\end{align*}
is exact for all $f\in \CTC_{2n-1}$, where
$\lambda_{k_1,k_2,k_3}^{(n)} =  \lambda_\jb^{(n)}$
with $\jb$ given by \eqref{jb-k}.
\end{thm}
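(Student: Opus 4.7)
The plan is to obtain the stated cubature formula as a direct transcription of \eqref{eq:cubature-T} via the linear change of variables $x_i = t_i - t_4$, $i=1,2,3$, together with the re-indexing \eqref{jb-k}. This change of variables sends $\triangle_H$ bijectively onto $\triangle^*$ and simultaneously carries the node set $\{\jb/(4n) : \jb \in \Lambda_n\}$ onto $\{(k_1/n, k_2/n, k_3/n) : 0 \le k_3 \le k_2 \le k_1 \le n\}$.

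First I would verify the correspondence on the domain. Using $t_4 = -t_1-t_2-t_3$, the constraints $0 \le t_1-t_2,\, t_2-t_3,\, t_3-t_4,\, t_1-t_4 \le 1$ defining $\triangle_H$ are equivalent to $0 \le x_3 \le x_2 \le x_1 \le 1$, so the image is exactly $\triangle^*$. With the parametrization $d\tb = dt_1\, dt_2\, dt_3$, the Jacobian of $(t_1,t_2,t_3) \mapsto (x_1,x_2,x_3)$ is
$$ \det\begin{pmatrix} 2 & 1 & 1 \\ 1 & 2 & 1 \\ 1 & 1 & 2 \end{pmatrix} = 4, $$
so $dx = 4\, d\tb$ and $|\triangle_H| = |\triangle^*|/4 = 1/24$.

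Next I would handle the nodes and weights. For $\jb \in \HH$, the congruence $j_1 \equiv j_2 \equiv j_3 \equiv j_4 \pmod{4}$ makes $k_i := (j_i - j_4)/4$ an integer, and \eqref{jb-k} defines a bijection $\HH \leftrightarrow \ZZ^3$ with inverse $j_4 = -(k_1+k_2+k_3)$, $j_i = 4k_i + j_4$. Under this bijection the defining inequalities $j_4 \le j_3 \le j_2 \le j_1 \le j_4 + 4n$ of $\Lambda_n$ translate precisely to $0 \le k_3 \le k_2 \le k_1 \le n$, and the point $\tb = \jb/(4n)$ maps to $x$ with $x_i = (j_i - j_4)/(4n) = k_i/n$. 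The weights $\lambda^{(n)}_{k_1,k_2,k_3}$ are defined to equal $\lambda^{(n)}_\jb$ by construction, so the weighted sum is preserved.

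Substituting into \eqref{eq:cubature-T}, the left-hand side becomes
$$ \frac{1}{|\triangle_H|}\int_{\triangle_H} f\, d\tb \;=\; 24 \cdot \tfrac{1}{4}\int_{\triangle^*} f\, dx \;=\; 6\int_{\triangle^*} f\, dx, $$
while the right-hand side takes exactly the claimed form; exactness for $f \in \CTC_{2n-1}$ is inherited directly from Theorem \ref{thm:4.4}. There is no genuine obstacle: the argument is routine bookkeeping of a linear change of variables, and the only points requiring care are to compute the Jacobian correctly and to check that the re-indexing \eqref{jb-k} carries the inequalities defining $\Lambda_n$ onto those defining the new index set.
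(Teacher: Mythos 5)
Your proposal is correct and is essentially the paper's own (implicit) argument: the paper states this theorem without proof, presenting it as a direct transcription of \eqref{eq:cubature-T} under the re-indexing \eqref{jb-k}, which is exactly the bookkeeping you carry out. One detail worth noting in your favor: you use the map $x_i=t_i-t_4$, which is the one actually consistent with \eqref{jb-k} (it sends $\triangle_H$ onto $\triangle^*$ and the node $\jb/(4n)$ to $(k_1/n,k_2/n,k_3/n)$), whereas the paper's prose loosely points to \eqref{coordinate}, which would instead carry $\triangle_H$ onto the tetrahedron $\triangle$ of \eqref{triangle}; since the normalized integrals satisfy $\frac{1}{|\triangle_H|}\int_{\triangle_H}f\,d\tb=\frac{1}{|\triangle^*|}\int_{\triangle^*}f\,dx=6\int_{\triangle^*}f\,dx$ for any affine bijection, your Jacobian computation confirms the constant $6$ and the argument is complete.
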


We also note that the Dirichlet kernel in \eqref{DnTheta-n} can be recast
into $x$ coordinates by \eqref{coordinate} straightforwardly, so is the
fundamental interpolation function $\ell_{\jb,n}(\tb)$ given in Theorem
\ref{interp-H}. Consequently, the Lagrange interpolation function in
Theorem \ref{thm:4.7} becomes
$$
  \CL_n^* f(x) = \sum_{0\le k_3\le k_2\le k_1 \le n}
     f (\tfrac{k_1}{n},\tfrac{k_2}{n},\tfrac{k_3}{n}) \ell_{k,n}^\triangle(x),
$$
where $\ell_{k,n}^\triangle$ satisfies the compact formula
$$
   \ell_{k,n}^\triangle (x) = \lambda_k^{(n)} \CP^+ \ell_{\jb,n}(\tb),
        \quad \hbox{where} \quad
       \ell_{\jb,n}(\tb) = \Phi_n^*(\tb - \tfrac{\jb}{4n})
$$
with $\Phi_n^*$ given in \eqref{eq:Phi-n*}, $\lambda_k^{(n)} = \lambda_\jb^{(n)}$
with $\jb$ as in \eqref{jb-k}. In the above formula we apply $\CP^+$ to the compact
 formula of $\Phi_n^*$ first and then use \eqref{coordinate} to change from $\tb$
 to $x$.

\bigskip\noindent
{\it Acknowledgment}. The authors thank an anonymous referee for his careful
reading and invaluable comments.

\end{document}